\newtheorem{Theorem}{Theorem}[section]
\newtheorem{theorem}{Theorem}[section]
\newtheorem{Lemma}[Theorem]{Lemma}
\newtheorem{corollary}{Corollary}[theorem]
\newtheorem{Remark}[Theorem]{Remark}
\newtheorem{assumption}{Assumption}
\begin{document}

  \textheight=9 true in
   \textwidth=6.0 true in
    \topmargin 30pt
     \setcounter{page}{1}



%




\title{Reconstruction of source function in a parabolic  equation \\ \vspace{.05in} using partial boundary measurements}
 
\author{T. Sharma\textsuperscript{1}}    
\address{\textsuperscript{1} Department of Mathematical Sciences, Indian Institute of Space Science and Technology, 	Thiruvananthapuram, Kerala, India.} \email{tarunsharma80065@gmail.com} 
 \author{L. Beilina \textsuperscript{2}}
 \address{\textsuperscript{2} Department of Mathematical Sciences, Chalmers University of Technology and University of Gothenburg, SE-42196 Gothenburg, Sweden.} \email{larisa@chalmers.se} 
 \author{K. Sakthivel \textsuperscript{3}}
 \address{\textsuperscript{3} Department of Mathematical Sciences, Indian Institute of Space Science and Technology, 	Thiruvananthapuram, Kerala, India.} \email{sakthivel@iist.ac.in  }
\date{}

\begin{abstract}
In this paper, we present the analytical and numerical study of the optimization
approach for determining the space-dependent source function
in the parabolic inverse source problem using partial boundary
measurements. The Lagrangian approach for the solution of the optimization problem is presented,
and optimality conditions are derived. The proof of the Fr\'echet differentiability of the
regularized Tikhonov functional and the existence result for the solution of the inverse source problem are established.  A  local stability estimate for the unknown source term is also presented.
The numerical examples justify the theoretical investigations using the conjugate gradient method (CGM) in 2D and 3D tests with noisy data.
\end{abstract}

\keywords{inverse source problem, parabolic problem, finite difference method,
optimization, Tikhonov functional, Lagrangian, conjugate gradient method} 
\subjclass[2010]{65M06; 65J22; 65K10;  65M32; 65M55; 65N21; 35K05; 35R30}
\maketitle
\graphicspath{
{/FIGURES/}
{/chalmers/groups/larisa_2/PhD_students/TarunSharma/MATLAB08_01_2025/SourcewithSensitivity/}
  {pics/}}
\section{Introduction}
The inverse problems in PDEs represent an intriguing area where mathematics intersects with practical challenges. These problems focus on determining unknowns in a system of differential equations using observable data. The boundary measurement problems involve determining unknown parameters or functions within a domain based on observed data from the boundary. These problems are often ill-posed, necessitating the use of regularization techniques to obtain stable solutions. In this paper, we present a theoretical analysis and develop a reconstruction algorithm to determine the spatially distributed source function in a parabolic PDE from a partial boundary measurement using an optimization technique. 

Let $ \Omega \subset \mathbb R^N, N=2,3$ be a convex bounded domain with a sufficiently smooth boundary $\partial \Omega \in C^2,$ and $(0,T)$ denotes the time interval with the final time $T$.
Let the boundary $\partial \Omega$ be such that $\partial \Omega =\partial _1 \Omega \cup \partial _2 \Omega $. Here, $\partial _1 \Omega$ is the top part of $\Omega$, 
and $\partial _2 \Omega$  denotes other sides of the domain $\Omega$.
Let us denote $ \Omega_T := \Omega \times (0,T), \partial \Omega_T := \partial \Omega \times (0,T), \partial_1 \Omega_T := \partial_1 \Omega \times (0,T)$.\par
Consider an inverse problem of recovering the unknown source function $F(x)$ in a general parabolic equation.
\begin{equation}\label{prb1}
\begin{cases}
a(x) \frac{\partial u(x,t)}{\partial t}  - \triangle u(x,t)  &= F(x)G(x,t), 
~~ (x,t) \in \Omega_T,\\
  u(x,0) &= 0,~~ x \in \Omega,\\
\partial_\nu u(x,t) &= 0, ~~ (x,t)\in \partial\Omega\times(0,T).    
\end{cases}  
\end{equation}
from the partial boundary  data  $\widetilde u(x,t)$ measured at the  boundary $\partial_1\Omega$ in time $(0,T)$:  
\begin{equation}\label{prb2}
u(x,t) = \widetilde u(x,t), \qquad (x,t)\in\partial_1\Omega\times(0,T). 
\end{equation}
Here, $F(x)$ is space-dependent and $G(x,t)$ is space and time-dependent source functions,
respectively, $\partial_\nu(\cdot)$ denotes the normal derivative of
$\partial \Omega$, where $\nu$ is the outward unit normal vector on
the boundary $\partial \Omega$.  

The problem \eqref{prb1}-\eqref{prb2} has a lot of applications, among other
things, in medical imaging \cite{BK}. More precisely, the heat source identification problems are the most commonly encountered inverse problems in heat conduction or diffusion. In many branches of science and engineering, e.g., crack identification, geophysical prospecting, pollutant detection, and designing the final state in melting and freezing processes, the characteristics of sources are often unknown and need to be determined. These problems have been studied over several decades due to their significance in a variety of scientific and engineering applications (see,\cite{cannon1998structural,savateev1995problems,Reeve,Isakov1991, hasanouglu2021introduction}  and
 references therein). In the modeling of air pollution phenomena,
 the source function in \eqref{prb1} is considered as the source pollutant. Thus, an accurate estimation of the
pollutant source is crucial to environmental safeguards in cities with high populations (see, \cite{Ebel, Cheng, Isakov} and
 references therein). 
 
 The inverse source identification problem, which is a classic ill-posed problem in the sense of Hadamard\cite{hadamard1964theorie}, i.e., a small perturbation in the input data may cause a dramatically large error in the solution (if it exists). In practice, it is challenging to exclude noise from measurement errors in the measured output data $\tilde u(x,t)$; as a result, an exact equality in \eqref{prb2} may not be satisfied. Therefore, apart from the other techniques for inverse problems for PDEs (see, \cite{bugheim1981global}), it should be noted that the optimization methods based on weak solution theory for PDEs play a crucial role in proving the results for inverse problems. The notion of determining the unknown parameters by optimization method is indeed classical (see, for instance,\cite{hasanouglu2021introduction,T,ivanov2013theory}) which yields a general solution for inverse problems without any uniqueness result of the solution. More precisely, this technique involves the restriction of the solution for the inverse problem under consideration into an admissible set and then finding a minimizer of a cost functional (with a suitable stabilizer or regularizer) as the general solution for the desired problem. In works \cite{Hasanov2007, Hasanov2011}, via a weak solution approach combined with an optimization method, the author elaborated on the method for simultaneous determination of source terms in a linear parabolic problem from the final time measurement data and source identification from the single Dirichlet-type measured output data, respectively. Further, this method has been effectively used for the inverse problems of higher order PDEs (\cite{anjuna2021determination}) and system of PDEs (\cite{Sak,NKM}). 
 
There are so many articles on the different methods for solving inverse source problems. Let us quickly review some of the numerical research works performed in this direction. In \cite{Johansson}, the source function $f(x)$ is reconstructed for the heat equation by the iterative BEM regularizing algorithm. For the linear parabolic problem, the author in \cite{zhang2012} used the variational approach to find the space-time dependent source functions with the final time measurement. The paper \cite{Erdem} studied the source identification for the linear heat equation by employing the finite difference technique with the CGM method. The source ($f(x)$) identification problem with the final time measured data for the linear parabolic equation, where the source term is given by $F(x,t) = f(x)g(x,t)+h(x,t),$  was studied by weighted homotopy analysis method in \cite{Shidfar}. The time-dependent source function $r(t)$ was determined  in \cite{Hazanee} by utilizing the generalized Fourier approach and the integral measurement. 

Most of the research on parabolic inverse problems has focused on the situation where the observation set $\Gamma$ equals the entire boundary $\partial\Omega$. In \cite{Isakov1993}, Isakov presents the first result in this direction, assuming that $\Gamma = \partial\Omega$ and that one is permitted to measure data at the final time $t = T$ on the complete domain $\partial\Omega$. It is common in physical studies to only have access to extremely small portions of a medium's boundary; therefore, it is fundamentally important to address problems with possibly very small subsets of the full boundary.
 As per our knowledge, there are very few articles related to the different parameter identifications for the parabolic problems
 with the partial boundary measurements.
In \cite{Fei2024}, the author discussed uniqueness results for inverse problems of semilinear reaction-diffusion equations using spherical quasimodes analysis and Runge approximation, with measurements given by the Dirichlet-to-Neumann map on an open subset of $\partial \Omega$. In \cite{Canuto}, the author studied uniqueness by examining the injectivity of the input-output operator for a class of heat equations through boundary measurements, where the measurements are given on the two relatively open pieces  $\Gamma_{in}$ and $\Gamma_{out}$  of the boundary $\partial \Omega$ such that $\Gamma_{in}$ and $\Gamma_{out}$ has a nonempty interior. \par
In the current work, the finite difference method is applied for the computation of the solution of the forward and adjoint problems appearing in the optimization problem. The minimization problem of reconstructing the source function $F(x)$ is formulated as the problem of finding a stationary point of a Lagrangian involving a forward equation (the state equation), a backward equation (the adjoint equation), and the equation expressing that the gradient with respect to the source $F(x)$ vanishes. One can see \cite{Larisa2022,BondestamBeilina} for a description of the optimization approach for the numerical solution of coefficient inverse problems using boundary measurements. \par
Our main contributions of this paper are summarized as follows:
\begin{itemize}
\item We prove a stability estimate for the adjoint problem in the case when the boundary condition is given by the measurements on the part of the boundary.
\item We prove the Fr\'echet differentiability
 of the regularized Tikhonov functional and compute the Fr\'echet derivative, which plays a vital role in the gradient-based numerical algorithm for the inverse problem.
\item  We establish the existence and uniqueness of the solution for the inverse problem when the set of admissible data is bounded.
\item Another significant result is a stability estimate for the inverse source problem (1). Since the inverse problem is posed in the context of a minimization problem, using a first-order necessary optimality condition satisfied by an optimal pair $(u(x,t;F^*),F^*),$
 we establish a local stability estimate for the unknown source term $F^* \in L^2(\Omega)$.
 \item A reconstruction algorithm employing the conjugate gradient method has been developed to determine the source function from partially measured boundary data. Since the step size is critical in gradient-based methods, an iterative formula for determining the step size in the conjugate gradient method is also derived.
 \item To validate the numerical results, we present different numerical examples in 2D and 3D, each with varying noise levels.  
\end{itemize}\par
An outline of the work is as follows. In Section \ref{sec:model} we
  derive the existence results for the direct, adjoint, and inverse problems. In section \ref{sec:fdmopt}, we  present numerical schemes used for FDM discretization of the forward and adjoint problems.
Section \ref{sec:algo}  presents the conjugate gradient algorithm to solve the optimization problem.
Section \ref{sec:numex} shows
numerical examples of the reconstruction of the source function in 2D and 3D
using scattered data of the simulated and exact solutions of the forward problem
collected at the part of the boundary of the computational domain.
Finally, Section \ref{sec:concl} makes conclusions and discusses the obtained results.
\section{Mathematical Analysis of the Direct Problem and Inverse Problem}
\label{sec:model}
\subsection{Function Spaces}
The function spaces and notations listed below are utilized in the subsequent sections. We denote the Sobolev spaces of functions by $L^p(\Omega)$ and $W^{m,p}(\Omega)$ for any $p \in [1, \infty]$ and $m$ is a positive integer, where the corresponding norms are $\|\cdot\|_{L^p(\Omega)}$ and $\|\cdot\|_{W^{m,p}(\Omega)}$. In the case when $p=2$, we use $H^m(\Omega):= W^{m,2}(\Omega)$ and the norm  $\|u\|_{H^m(\Omega)} = {\left(\sum_{|k|\leq m}\|D^ku\|^2_{L^2(\Omega)}\right)}^{\frac{1}{2}}.$ Further, let us denote the standard inner product in $L^2(\Omega)$ by
$(\cdot,\cdot),$ and the inner  product in space and time is denoted by
$((\cdot,\cdot))_{\Omega_T}$.
 
Consider the function spaces $L^2(0,T; H^m(\Omega))$ and $C([0,T];H^m(\Omega)),$  which consist of the functions $u:[0,T]\rightarrow H^m(\Omega)$ such that 
\begin{align*}
\|u\|_{L^2(0,T; H^m(\Omega))} = \left( \int_0^T \|u\|^2_{H^m(\Omega)} dt \right)^{\frac{1}{2}}< \infty,   \\
\|u\|_{C([0,T]; H^m(\Omega))} = \max_{t\in [0,T]}\|u(t)\|_{H^m(\Omega)}< \infty,
\end{align*}
\noindent where $m$ is a non-negative integer. 
We also introduce the following spaces for the Lagrangian formulation of the inverse problem and numerical analysis:
\begin{equation*}\label{spacesfdm}
\begin{split}
H_u^2(\Omega_T) &:= \{ w \in H^2(\Omega_T): w( \cdot , 0) = 0 \}, \\
H_\lambda^2(\Omega_T) &:= \{ w \in  H^2(\Omega_T): w( \cdot , T) = 0\},\\
U^2 &:=H_u^2(\Omega_T)\times H_\lambda ^2(\Omega_T)\times  L^2\left( \Omega \right).
\end{split}
\end{equation*}
We use the below-listed assumptions on the coefficients and source functions:
\begin{assumption}\label{assm}
\begin{equation}
\begin{split}
\begin{cases}
\mathcal F := \{F \in L^2(\Omega): 0<d_1\leq F(x) \leq d_2, \forall x\in \Omega, \,\,\mbox{and} \ \parallel F \parallel _{L^2(\Omega)} \leq d_3 \},\\
a \in C^2(\Omega)~~ \mbox{such that}\,\, 0<a_\min<a(x)<a_\max, \, \forall x\in\Omega \,\,\,\mbox{and}\,\, a_\min,~a_\max\in  \mathbb R^+,  \nonumber \\
G \in L^2(0,T; L^\infty(\Omega)),  \\
z_\delta(x) \in C^\infty(\Omega). 
\end{cases}
\end{split}
\end{equation}
\end{assumption}
\subsection{Stability Analysis for the Direct Problem}
\begin{Theorem}\label{th1}
Assume that the conditions of Assumption \ref{assm} on the functions $a(x)$, $F(x)$ and $G(x,t)$ hold. Then there exists a unique weak solution $u \in L^2(0,T; H^1(\Omega))$, $ \frac{\partial u}{\partial t}\in L^2(0,T; H^1(\Omega)^*)$ of the problem
 (1), which satisfies the following energy estimates:
 \begin{equation}\label{d1}
 \begin{split}
\int_\Omega|\sqrt a u|^2 ~dx + 2\int_0^T\int_\Omega|\nabla u|^2 ~dx dt \leq C_1(a,T)\parallel F \parallel^2_{L^2(\Omega)} \parallel G \parallel^2_{L^2(0,T; L^\infty(\Omega)},
\end{split}
 \end{equation}
where $C_1(a,T) = \left(1+\frac{T}{a_\min}e^{\frac{T}{a_\min}}\right).$\\
Moreover, the weak solution $u$ satisfying \eqref{d1} also has the regularity that 
$u\in L^2(0,T; H^2(\Omega))\cap L^\infty(0,T; H^1(\Omega))$, $\frac{\partial u}{\partial t}\in  L^2(0,T; L^2(\Omega))$ such that the following estimate holds: 
 \begin{eqnarray}\label{d2}
 \lefteqn{\operatorname*{ess\,sup}_{0\leq t\leq T}\| u(t)\|^2_{H^1(\Omega)}          +  \|u\|^2_{L^2(0,T; H^2(\Omega))} + \left\|a\frac{\partial u}{\partial t }\right\|^2_{L^2(0,T; L^2(\Omega)} }\\
 && \leq C_2(a,T) \parallel F \parallel^2_{L^2(\Omega)}\parallel G \parallel^2_{L^2(0,T; L^\infty(\Omega))}, \nonumber
 \end{eqnarray}
where $C_2(a,T)>0$ depends on $a$ and $T$.
\end{Theorem}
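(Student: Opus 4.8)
The plan is to prove Theorem \ref{th1} in two stages, following the classical Galerkin scheme for linear parabolic equations (as in Evans or Ladyzhenskaya--Solonnikov--Uralceva), but carefully tracking the dependence of the constants on $a_\min$ and $T$ and on the data $F,G$.

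\textbf{Step 1: Existence and the first energy estimate \eqref{d1}.} I would begin by constructing Galerkin approximations $u_m(t) = \sum_{k=1}^m d_m^k(t)\, w_k$, where $\{w_k\}$ is an orthogonal basis of $H^1(\Omega)$ (the eigenfunctions of the Neumann Laplacian are convenient, since the boundary condition in \eqref{prb1} is $\partial_\nu u = 0$), orthonormal in $L^2(\Omega)$. Substituting into the weak form $\big(a\,\partial_t u_m, w_k\big) + \big(\nabla u_m,\nabla w_k\big) = \big(FG, w_k\big)$ with $u_m(0)=0$ gives a linear ODE system for the $d_m^k$, solvable on $[0,T]$ by Carathéodory/Picard since $a(x)$ is bounded below. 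For the a priori bound, I multiply by $d_m^k(t)$ and sum over $k$, i.e. test with $w = u_m$, to get $\frac{1}{2}\frac{d}{dt}\int_\Omega a |u_m|^2 \, dx + \int_\Omega |\nabla u_m|^2\, dx = \int_\Omega F G\, u_m\, dx$. Bounding the right-hand side by $\|FG\|_{L^2(\Omega)}\|u_m\|_{L^2(\Omega)} \le \tfrac{1}{a_\min}\|FG\|_{L^2(\Omega)}\big(\int_\Omega a|u_m|^2\big)^{1/2}$ and using $\|G(t)\|_{L^\infty(\Omega)}$ to write $\|F G(t)\|_{L^2(\Omega)} \le \|F\|_{L^2(\Omega)}\|G(t)\|_{L^\infty(\Omega)}$, then integrating in time and applying Grönwall's inequality to $y(t) := \int_\Omega a|u_m(t)|^2\,dx$, yields \eqref{d1} uniformly in $m$ with $C_1(a,T) = 1 + \frac{T}{a_\min}e^{T/a_\min}$. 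This gives $u_m$ bounded in $L^2(0,T;H^1(\Omega))$; a standard estimate on $\partial_t u_m$ in $L^2(0,T;H^1(\Omega)^*)$ follows by testing against arbitrary $v \in H^1(\Omega)$ with $\|v\|_{H^1}\le 1$, using the equation and the fact that $a^{-1}$ is bounded. Passing to a weakly convergent subsequence and using the density of $\mathrm{span}\{w_k\}$ identifies the limit as a weak solution; uniqueness follows by taking the difference of two solutions (zero data) and running the same energy identity, which forces $u \equiv 0$.

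\textbf{Step 2: Improved regularity and estimate \eqref{d2}.} Here I would test the Galerkin equations with $w = \partial_t u_m$ (legitimate since $\partial_t u_m \in \mathrm{span}\{w_k\}$), obtaining $\int_\Omega a|\partial_t u_m|^2\,dx + \frac{1}{2}\frac{d}{dt}\int_\Omega |\nabla u_m|^2\,dx = \int_\Omega FG\,\partial_t u_m\,dx \le \frac{1}{2a_\min}\|FG\|^2_{L^2(\Omega)} + \frac{a_\min}{2}\int_\Omega |\partial_t u_m|^2\,dx \le \frac{1}{2a_\min}\|FG\|^2_{L^2(\Omega)} + \frac12\int_\Omega a|\partial_t u_m|^2\,dx$, so the $\partial_t u_m$ term absorbs. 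Integrating in time (with $\nabla u_m(0)=0$) bounds $\mathrm{ess\,sup}_t\|\nabla u_m(t)\|^2_{L^2}$ and $\|a^{1/2}\partial_t u_m\|^2_{L^2(0,T;L^2)}$, hence $\|a\,\partial_t u_m\|^2_{L^2(0,T;L^2)}$ via $a\le a_\max$, in terms of $\|F\|^2_{L^2(\Omega)}\|G\|^2_{L^2(0,T;L^\infty(\Omega))}$. Combined with Step 1 this controls $\mathrm{ess\,sup}_t\|u_m(t)\|^2_{H^1(\Omega)}$. Finally, for the $L^2(0,T;H^2(\Omega))$ bound I rewrite the PDE as $-\triangle u_m = FG - a\,\partial_t u_m$ and invoke elliptic regularity for the Neumann problem on the $C^2$ convex domain $\Omega$: $\|u_m(t)\|_{H^2(\Omega)} \le C\big(\|FG(t) - a\,\partial_t u_m(t)\|_{L^2(\Omega)} + \|u_m(t)\|_{H^1(\Omega)}\big)$, then square and integrate in $t$, using the already-established bounds on the right-hand side. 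Passing to the limit $m\to\infty$ with lower semicontinuity of norms under weak convergence gives \eqref{d2} for the weak solution $u$.

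\textbf{Main obstacle.} The routine parts (Galerkin ODE solvability, weak compactness, uniqueness) are standard; the delicate points are (i) getting the \emph{explicit} constant $C_1(a,T)=1+\frac{T}{a_\min}e^{T/a_\min}$ exactly as stated, which requires being careful about whether Grönwall is applied to $\int_\Omega a|u|^2$ or to $\|u\|^2_{L^2}$ and about the factor from $\int_0^T\|G\|^2_{L^\infty}$, and (ii) the elliptic-regularity step for the $H^2$ bound — one must check that the Neumann elliptic estimate applies with the stated smoothness ($\partial\Omega\in C^2$, $\Omega$ convex and bounded, so this is available) and that $a\,\partial_t u_m$ lands in $L^2$ uniformly, which is exactly what the $\partial_t u_m$-testing step delivers. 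A minor technical care is needed to justify testing with $\partial_t u_m$ and to ensure the time-regularity is enough to integrate the identity $\frac{d}{dt}\int_\Omega|\nabla u_m|^2$, but at the Galerkin level this is immediate since $u_m$ is smooth in $t$.
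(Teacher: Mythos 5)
Your proposal is correct and follows essentially the same route as the paper: the paper likewise tests with $u$ and applies Gr\"onwall to $\int_\Omega a|u|^2\,dx$ to obtain \eqref{d1} with $C_1(a,T)=1+\frac{T}{a_\min}e^{T/a_\min}$, then tests with $\partial_t u$ (via Young's inequality with $\epsilon=a_\min$) and invokes elliptic regularity for $-\triangle u = FG - a\,\partial_t u$ to get \eqref{d2}. The only difference is that you carry out these estimates at the Galerkin level and pass to the limit, whereas the paper delegates the Galerkin existence/uniqueness machinery to standard references and works directly with the weak solution.
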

\begin{proof}
The existence and uniqueness can be proven by the standard Galerkin approximation (see, \cite{evans2022partial, Salsa}). We only prove the required estimates.
We  multiply equation  \eqref{prb1} by $u$ and integrate over $\Omega$, apply integration by parts and boundary condition $\partial_\nu u = 0$ to get
\begin{equation*}\label{drp3}
\frac{1}{2} \frac{d}{dt}
\int_\Omega  |\sqrt a u|^2~dx + \int_\Omega |\nabla u|^2 ~dx 
 = \int_\Omega  F G u ~dx.  
\end{equation*}
Integrating the obtained equation in time $[0,t]$ and using homogeneous initial conditions, we obtain
\begin{eqnarray} \label{ee}
\frac{1}{2} 
\int_\Omega  |\sqrt a u|^2 ~dx + \int_0^t  \int_\Omega |\nabla u|^2 ~dx dt 
 &=& \int_0^t \int_\Omega  F G u~dx dt \nonumber\\
 &\leq& \|FG\|_{L^2(0,t;L^2(\Omega))} \|u\|_{L^2(0,t;L^2(\Omega))}. 
\end{eqnarray}
From Assumption \ref{assm}, $F\in L^2(\Omega)$ and $G\in L^2(0,T;L^\infty(\Omega))$, we have
\begin{equation}\label{de1}
\|FG\|_{L^2(0,T;L^2(\Omega))}  \leq \|F\|_{L^2(\Omega)}{\left(\int_0^T \|G(t)\|^2_{L^\infty(\Omega)}dt\right)}^{\frac{1}{2}} = \|F\|_{L^2(\Omega)}\|G\|_{L^2(0,T;L^\infty(\Omega))},
\end{equation}
which implies $FG\in L^2(0,T;L^2(\Omega)).$ Then, taking supremum over $t \in [0,T]$  in \eqref{ee},  and using Gr\"onwall's inequality,  we get the following energy estimate 
\begin{equation*}\label{drp3}
\begin{split}
\int_\Omega  |\sqrt a u|^2 ~dx + 2\int_0^T\int_\Omega |\nabla u|^2 ~dx dt &\leq \left(1+ \frac{T}{a_\min}e^{\frac{T}{a_\min}}\right)\parallel F \parallel^2_{L^2(\Omega)}\parallel G \parallel^2_{L^2(0,T; L^\infty(\Omega))}.
\end{split}
\end{equation*}
This completes the proof of \eqref{d1}. For the regularity estimate, multiply \eqref{prb1} by $\frac{\partial u}{\partial t}$ and using the inequality $2ab \leq \frac{1}{\epsilon}a^2 + \epsilon b^2$, one can get
\begin{equation*}
\left(1- \frac{\epsilon}{2a_\min}\right) \int_\Omega \left|\sqrt a\frac{\partial u}{\partial t} \right|^2~dx + \frac{1}{2}\frac{d}{dt}\int_\Omega|\nabla u|^2~dx \leq \frac{1}{2\epsilon} \parallel F \parallel^2_{L^2(\Omega)}\parallel G \parallel^2_{L^2(0,T; L^\infty(\Omega))}.
\end{equation*}
By taking $\epsilon = a_\min,$ rearranging the first term and integrating over $(0,T)$, one can get
\begin{equation}\label{ddd1}
 \int_0^T \int_\Omega\left|a\frac{\partial u}{\partial t} \right|^2~dxdt + a_\max \int_\Omega |\nabla u|^2~dx \leq \frac{a_\max}{a_\min}\parallel F \parallel^2_{L^2(\Omega)}\parallel G \parallel^2_{L^2(0,T; L^\infty(\Omega))}.  
\end{equation}
In particular, from \eqref{de1} and \eqref{ddd1}, we have $FG - a\frac{\partial u}{\partial t} \in L^2(0,T;L^2(\Omega)) $. Now applying elliptic regularity result from \cite{Salsa}, Theorem 8.13, we have
\begin{equation*}\label{ddd2}
\|u(t)\|^2_{H^2(\Omega)} \leq C \left( \parallel F \parallel^2_{L^2(\Omega)}\parallel G \parallel^2_{L^2(0,T; L^\infty(\Omega))} + \int_\Omega\left|a\frac{\partial u}{\partial t} \right|^2~dx  \right).    
\end{equation*}
By integrating over $(0,T)$ and using \eqref{ddd1}, one obtains that
\begin{equation} \label{re}
 \int_0^T \|u(t)\|^2_{H^2(\Omega)}~dt \leq C_2(a,T) \parallel F \parallel^2_{L^2(\Omega)}\parallel G \parallel^2_{L^2(0,T; L^\infty(\Omega))}.  
\end{equation} 
Thus, combining \eqref{ddd1} and \eqref{re}, we complete the estimate \eqref{d2}.  \qed
\end{proof}
\subsection{Lagrangian Approach for the Solution of ISP }
\label{sec:opt1}
This subsection briefly describes
 the reconstruction method  for the inverse source
 problem using the standard optimization approach, which supports, in particular, FDM discretization of the forward and adjoint problems. For details of the Lagrangian approach to the solution of inverse problems, we refer to  \cite{BK,BondestamBeilina}.\par

\noindent\textbf{Inverse Source Problem (ISP).}
Let the space-dependent source function $F \left( x\right)$ in problem (\ref{prb1})-(\ref{prb2}) satisfy Assumption \ref{assm} and 
 is unknown in the domain $\Omega$. We reconstruct $ F \left( x\right) $ in (\ref{prb1})-(\ref{prb2}) for $x\in \Omega$
under the condition that the following  function $ \widetilde u(x,t) $
is known at the boundary $ \partial_1 \Omega$, i.e. at the partial boundary: 
\begin{equation}\label{mes1}
\begin{split}
  u(x,t) = \widetilde u(x,t), ~~\forall  (x,t) \in \partial_1 \Omega \times (0,T).
  \end{split}
\end{equation}
However, due to measurement errors, the output $\widetilde u(x,t)$ always contains a random noise and as a result, an exact equality 
in equation \eqref{mes1} is not possible in practice.\par
To solve  ISP,   we want to find the
 stationary point of the following regularized
 Tikhonov functional
\begin{equation}\label{L1}
J_\gamma(u, F) = \frac{1}{2} \int_\Omega\int_0^T( u(x,t;F) -  \widetilde u(x,t))^2  \delta_{\rm obs} z_\delta (x) ~dxdt   +
\frac{1}{2} \gamma \int_\Omega(F -  F_0)^2(x)~~ dx,
\end{equation}
where $\tilde u(x,t)$ is the observed data at a finite set of observation points $x_{\rm obs} \in \Omega$, the delta-function
$\delta_{\rm obs}$  corresponds to 
the observation points, and $z_\delta $ is a smoothing function (see \cite{BondestamBeilina} for details of choosing this function).\par
To find a minimum of (\ref{L1}), we apply the Lagrangian approach  (see details in
\cite{BK,BondestamBeilina})  and define  the following Lagrangian using the definition of the   forward  model problem \eqref{prb1}:
\begin{equation}\label{lagrangian1fdm}
\begin{split}
  L(v) &= J_\gamma(u, F) +
  ((\lambda, a \frac{\partial u}{ \partial t } - \triangle u - F(x)G(x,t)))_{\Omega_T},
\end{split}
\end{equation}
where
$v=(u,\lambda, F) \in U^2$. We now search for a stationary point of the Lagrangian
with respect to $v$ satisfying for all
$\bar v= (\bar u, \bar\lambda, \bar F) \in U^2$
 to the following optimality condition
\begin{equation}
 L^\prime(v; \bar v) = 0 ,  \label{scalar_lagr1fdm}
\end{equation}
where $ L^\prime (v; \bar v )$
is the Fr\'{e}chet derivative  of
the Lagrangian (\ref{lagrangian1fdm}) or
Jacobian of $L$ at $v$.
The optimality condition \eqref{scalar_lagr1fdm} for the Lagrangian (\ref{lagrangian1fdm})
 can also be written  for all
$\bar v \in U^2$ as
\begin{equation}
  L^\prime(v; \bar{v}) =
  L^\prime_\lambda(v; \bar\lambda)  + L^\prime_u(v; \bar u) + L^\prime_F(v; \bar F): =
  \frac{\partial L}{\partial \lambda}(v)(\bar\lambda) +  \frac{\partial L}{\partial u}(v)(\bar u)
  + \frac{\partial L}{\partial F}(v)(\bar F) = 0.  \label{scalar_lagrang1fdm}
\end{equation}
We observe from   \eqref{scalar_lagrang1fdm} 
 that
to   satisfy optimality condition \eqref{scalar_lagr1fdm},
every component of   \eqref{scalar_lagrang1fdm} should be zero.
To find the Fr\'{e}chet derivative (\ref{scalar_lagr1fdm}) of
the Lagrangian (\ref{lagrangian1fdm}), we consider $L(v + \bar v) - L(v),~
\forall \bar v \in U^2$ and single out the linear part of the
obtained expression with respect to $\bar v$, and ignoring  all nonlinear terms.  In the derivation of the
Fr\'{e}chet derivative we assume that in the Lagrangian
(\ref{lagrangian1fdm}) variables in $v=(u,\lambda,  F) \in U^2$ can be
varied independent of each other and thus 
the Fr\'{e}chet derivative of the Lagrangian (\ref{lagrangian1fdm})  will be the same
as by assuming that functions $u$ and $\lambda$ are dependent on the
 source function $F$, see discussion in Chapter 4 of \cite{BK}.
 The optimality conditions \eqref{scalar_lagr1fdm} for the Lagrangian (\ref{lagrangian1fdm})
 for all
 $\bar v \in U^2$ are derived in  several  works, see 
 \cite{BK,BondestamBeilina}
 for more details of the derivation.
 More precisely, the optimality conditions \eqref{scalar_lagr1fdm} for the Lagrangian (\ref{lagrangian1fdm})
 for all
 $\bar v \in U^2$   are: 
\begin{equation*}\label{forward1fdm}
\begin{split}
0 &= L^\prime_\lambda(v; \bar\lambda) =  \frac{\partial L}{\partial \lambda}(v)(\bar{\lambda}) =
((a \frac{\partial u}{ \partial t },   \bar\lambda))_{\Omega_T}\\
&- \left(\left( \triangle u, \bar\lambda\right)\right)_{\Omega_T} 
-  \left(\left(  F(x)G(x,t), \bar{\lambda}\right)\right)_{\Omega_T}  
,~~\forall \bar\lambda \in H_\lambda^2(\Omega_T),
\end{split}
\end{equation*}
\begin{equation*} \label{control1fdm}
\begin{split}
0 &=   L^\prime_u(v; \bar u) =  \frac{\partial L}{\partial u}(v)(\bar u) = 
 \int_\Omega\int_0^T( u - \widetilde u)~ \bar u  ~\delta_{\rm obs} ~ z_\delta(x)~ dx dt \\
&- ((a \frac{\partial \lambda}{ \partial t },   \bar u ))_{\Omega_T}
 - \left(\left(  \Delta \lambda,   \bar u \right)\right)_{\Omega_T} + \int_{\partial \Omega}\int_0^T \partial_\nu\lambda~\bar u~ dx dt, \quad
~\forall \bar u \in H_u^2(\Omega_T),
\end{split}
\end{equation*} 
\begin{equation*} \label{grad1new}
\begin{split}
0 &=  L^\prime_F(v; \bar F) =  \frac{\partial L}{\partial F}(v)(\bar F) 
=  - \left(G(x,t) \lambda,  \bar F \right)  + \gamma \left(( F-  F_0), \bar F \right),~\forall \bar F \in L^2(\Omega).
\end{split}
\end{equation*}
\noindent We observe  that  the condition $L^\prime_\lambda(v; \bar\lambda) = 0 $
 corresponds to the state equation
(\ref{prb1}), and  the condition  $L^\prime_u(v; \bar u) = 0 $ will result in the
 following adjoint problem
\begin{equation}\label{adj11}
\begin{cases}
-a(x) \frac{\partial \lambda}{\partial t} - \Delta \lambda =  -(u(x,t) - \tilde u(x,t)) \delta_{\rm obs} z_\delta(x), \quad (x,t)\in \Omega\times(0,T),\\
\lambda(x,T) = 0, \quad x\in \Omega, \\
\partial_\nu\lambda(x,t) = 0, \quad (x,t)\in \partial\Omega\times (0,T).  
\end{cases}
\end{equation}
\subsection{Stability Analysis for the Adjoint Problem}
In this subsection, we discuss the stability estimate for the adjoint system \eqref{adj11} of the direct problem \eqref{prb1}.
\begin{Theorem}\label{th2}
Assume that conditions from Assumption \ref{assm} on the functions $a(x)$, $z_\delta(x)$ hold and the solution of direct problem $u\in L^2(0,T; L^2(\Omega))$. Then there exists a unique weak solution $\lambda \in L^2(0,T; H^1(\Omega))$ such that $\frac{\partial \lambda}{\partial t} \in L^2(0,T; (H^1(\Omega))^*)$ of the problem \eqref{adj11}, which satisfies the following energy estimate:
\begin{equation}\label{ad1}
\begin{split}
 \int_\Omega|\sqrt a\lambda|^2 ~dx + 2\int_0^T\int_\Omega|\nabla \lambda|^2~dx dt \leq C_1(a,T)\parallel (u-\tilde u) \delta_{\rm obs} z_\delta  \parallel^2_{L^2(0,T; L^2(\Omega))},
 \end{split}
\end{equation}
 where $C_1(a,T)$ is given in Theorem \ref{th1}.
Moreover, the weak solution $\lambda$ of \eqref{adj11} also satisfies the following regularity estimate: 
 \begin{eqnarray}\label{d11}
 \lefteqn{\operatorname*{ess\,sup}_{0\leq t\leq T}{\| \lambda(t)\|}^2_{H^1(\Omega)}          +  \|\lambda\|^2_{L^2(0,T;H^2(\Omega))} + \left\|a\frac{\partial \lambda}{\partial t}\right\|^2_{L^2(0,T;L^2(\Omega)}}  \nonumber \\    
&&\leq C_3(a,T)\parallel(u-\tilde u) \delta_{\rm obs} z_\delta \parallel^2_{L^2(0,T; L^2(\Omega))},
 \end{eqnarray}
where $C_3(a,T)>0$ depends on $a$ and $T$.
\end{Theorem}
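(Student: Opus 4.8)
The plan is to reduce the backward‑in‑time adjoint problem \eqref{adj11} to a forward problem of exactly the type already handled in Theorem \ref{th1}, and then invoke that theorem. Introduce the time reversal $s=T-t$ and set $\mu(x,s):=\lambda(x,T-s)$ and $g(x,s):=-\big(u(x,T-s)-\widetilde u(x,T-s)\big)\delta_{\rm obs}\, z_\delta(x)$. Since $\partial_t\lambda=-\partial_s\mu$, the function $\mu$ solves
\begin{equation*}
\begin{cases}
a(x)\,\partial_s\mu-\Delta\mu = g(x,s), & (x,s)\in\Omega\times(0,T),\\
\mu(x,0)=0, & x\in\Omega,\\
\partial_\nu\mu(x,s)=0, & (x,s)\in\partial\Omega\times(0,T),
\end{cases}
\end{equation*}
which has the same structure as \eqref{prb1}, with the product $FG$ replaced by $g$. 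Because $t\mapsto T-t$ is an isometry of $L^2(0,T;\,\cdot\,)$ and leaves $a_\min$, $a_\max$ and $T$ unchanged, every estimate proven for $\mu$ transfers back to $\lambda$ with the same constants; in particular $C_1(a,T)$ in \eqref{ad1} is literally the constant of Theorem \ref{th1}.

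The second step is to check that the right–hand side is admissible, i.e. $g\in L^2(0,T;L^2(\Omega))$. This follows from the hypotheses: $u\in L^2(0,T;L^2(\Omega))$, the measured datum $\widetilde u$ enters only through the smoothed observation operator, $z_\delta\in C^\infty(\Omega)$ is bounded on $\overline\Omega$ by Assumption \ref{assm}, and $\delta_{\rm obs}$ is understood (as in \cite{BondestamBeilina}) as a finite sum of mollified point masses, so that multiplication by $\delta_{\rm obs}\, z_\delta$ is a bounded operator on $L^2(\Omega)$. Hence $\|g\|_{L^2(0,T;L^2(\Omega))}=\|(u-\widetilde u)\delta_{\rm obs}\, z_\delta\|_{L^2(0,T;L^2(\Omega))}<\infty$, and existence and uniqueness of the weak solution $\lambda$ with $\partial_t\lambda\in L^2(0,T;(H^1(\Omega))^*)$ follow from the Galerkin construction exactly as in Theorem \ref{th1}.

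The two quantitative bounds are then obtained by replaying the energy computations of Theorem \ref{th1} on the reversed problem. Testing the equation for $\mu$ with $\mu$, integrating over $\Omega$ and over $(0,s)$, using $\mu(\cdot,0)=0$, Cauchy–Schwarz and Gr\"onwall's inequality yields
\begin{equation*}
\int_\Omega|\sqrt a\,\mu|^2\,dx + 2\int_0^T\!\!\int_\Omega|\nabla\mu|^2\,dx\,ds \le C_1(a,T)\,\|g\|^2_{L^2(0,T;L^2(\Omega))},
\end{equation*}
and undoing the substitution gives \eqref{ad1}. For \eqref{d11}, test with $\partial_s\mu$, use $2ab\le\frac{1}{\epsilon}a^2+\epsilon b^2$ with $\epsilon=a_\min$ to absorb the time‑derivative term, integrate in time to control $\operatorname*{ess\,sup}_{0\le s\le T}\|\mu(s)\|^2_{H^1(\Omega)}$ and $\|a\,\partial_s\mu\|^2_{L^2(0,T;L^2(\Omega))}$, and then apply the elliptic regularity estimate (Theorem 8.13 of \cite{Salsa}) to $-\Delta\mu=g-a\,\partial_s\mu\in L^2(0,T;L^2(\Omega))$ to bound $\|\mu\|^2_{L^2(0,T;H^2(\Omega))}$. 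Reversing time once more produces \eqref{d11} with some $C_3(a,T)>0$.

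I do not expect a genuine obstacle: the argument is structurally identical to the proof of Theorem \ref{th1}. The only point needing care is the first one — verifying that the minus sign in $-a\,\partial_t\lambda$ together with the terminal condition $\lambda(\cdot,T)=0$ combine so that after the reversal $s=T-t$ the Gr\"onwall step runs \emph{forward} in $s$ and no exponential growth in the wrong time direction appears (equivalently, one may argue directly on \eqref{adj11} by integrating the energy identity from $t$ to $T$). A secondary technical nuisance is fixing the precise meaning of $\delta_{\rm obs}$ as the regularized indicator of the observation set, so that $(u-\widetilde u)\delta_{\rm obs}\, z_\delta$ honestly belongs to $L^2(\Omega_T)$; with that convention in place, \eqref{ad1}–\eqref{d11} are immediate consequences of Theorem \ref{th1}.
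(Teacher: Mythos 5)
Your proposal is correct and takes essentially the same route as the paper: the paper's proof is the backward-in-time energy estimate on \eqref{adj11} (multiply by $\lambda$, use Gr\"onwall, integrate from $t$ to $T$ using $\lambda(\cdot,T)=0$), which is exactly the computation your substitution $s=T-t$ reproduces, and for the regularity bound \eqref{d11} the paper, like you, simply invokes the argument of Theorem \ref{th1} with the right-hand side replaced by $(u-\tilde u)\,\delta_{\rm obs}\,z_\delta$. The only cosmetic difference is that you package this as a formal time-reversal reduction to Theorem \ref{th1}, whereas the paper carries out the equivalent energy identity directly on the backward equation.
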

\begin{proof}
We only sketch the proof of the estimate \eqref{ad1}.
Multiply the equation  \eqref{adj11} by $\lambda$ and integrate over $\Omega$, apply integration by parts and the boundary condition $\partial_\nu \lambda = 0$ to get
\begin{eqnarray}\label{drp34}
\lefteqn{- \frac{d}{dt}
 \int_\Omega  |\sqrt a \lambda|^2 ~dx + 2\int_\Omega |\nabla \lambda|^2 ~dx }\\
 &&\leq \int_\Omega |(u(x,t) - \tilde u(x,t)) \delta_{\rm obs} z_\delta(x)|^{2}~dx + \frac{1}{a_{\min}}\int_{\Omega} |\sqrt a \lambda|^{2}~dx.  \nonumber
\end{eqnarray}
Now using Gr\"onwall's inequality, we have 
\begin{equation}\label{drp35}
\int_\Omega  |\sqrt a \lambda|^2 ~dx  
 \leq e^{\frac{(T-t)}{a_{\min}}} \parallel(u-\tilde u)  \delta_{\rm obs} z_\delta  \parallel^2_{L^2(t,T; L^2(\Omega))} .  
\end{equation}
By integrating  \eqref{drp34} over $[t,T]$, using the terminal condition  $\lambda(x,T)=0$ and  \eqref{drp35} on the right-hand side of  \eqref{drp34}, we obtain
\begin{equation*}\label{drp3}
\begin{split}
\int_\Omega  |\sqrt a \lambda|^2 ~dx +
2\int_t^T\int_\Omega |\nabla \lambda |^2 ~dx dt \leq 
 & \left(1+ \frac{(T-t)}{a_\min}e^{\frac{(T-t)}{a_\min}}\right) \parallel(u-\tilde u )  \delta_{\rm obs} z_\delta  \parallel^2_{L^2(t,T; L^2(\Omega))}.  
\end{split}
\end{equation*}
\noindent By taking  $t\rightarrow 0$ in the above inequality, we get the estimate
\eqref{ad1}.
The improved regularity result directly follows from Theorem \ref{th1} with the only difference being the change
 on the right-hand side  of the estimate \eqref{d11}.\qed
\end{proof}
\subsection{Gradient of the Functional}
This section deals with the derivation of the Fr\'{e}chet derivative of the objective functional $J_\gamma(u,F)$.
 Assume that the coefficients $F, F+\delta F \in \mathcal{F}$.
Let $\delta u = u(x,t,F+\delta F)-u(x,t,F)$.
Consider the increment in the functional \eqref{L1} as follows:
\begin{equation*} \label{gr1}
\begin{split}
\delta J_\gamma(u,F) &= J_\gamma(u,F+\delta F) - J_\gamma(u,F)\\
& = \frac{1}{2}\int_0^T\int_\Omega(\delta u)^2
\delta_{\rm obs} z_\delta(x)~dx dt +
\int_0^T\int_\Omega\delta u(u-\tilde u)\delta_{\rm obs}z_\delta(x) ~dx dt\\
& \ \ \ + \frac{\gamma}{2}\int_\Omega(\delta F(x))^2 ~dx + \gamma \int_\Omega\delta F(x)(F-F_0)(x)~dx. 
\end{split}
\end{equation*}
\begin{Theorem}
Suppose  Assumption \ref{assm} holds and let $\lambda$ be the weak solution to the adjoint problem \eqref{adj11}.  Then the functional $J_\gamma(u,F)$ is Fr\'echet differentiable with the derivative 
\begin{equation} 
  J^\prime_\gamma(u,F)(x) = - \int_0^TG(x,\tau)\lambda(x,\tau) d\tau + \gamma(F-F_0)(x). \label{gra}
\end{equation}
\end{Theorem}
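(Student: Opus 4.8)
The plan is to compute the increment $\delta J_\gamma(u,F) = J_\gamma(u,F+\delta F) - J_\gamma(u,F)$ directly and identify the linear-in-$\delta F$ part as the action of the Fréchet derivative, while estimating the remainder. From the expression for $\delta J_\gamma$ already displayed in the excerpt, the quadratic terms $\tfrac12\int_0^T\int_\Omega(\delta u)^2 \delta_{\rm obs} z_\delta\,dx\,dt$ and $\tfrac{\gamma}{2}\int_\Omega(\delta F)^2\,dx$ are the candidate remainder, and the two remaining terms
\[
\int_0^T\int_\Omega \delta u\,(u-\tilde u)\,\delta_{\rm obs} z_\delta\,dx\,dt + \gamma\int_\Omega \delta F\,(F-F_0)\,dx
\]
are linear in the increment — but the first of these is linear in $\delta u$, not manifestly in $\delta F$. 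The crux is therefore to rewrite $\int_0^T\int_\Omega \delta u\,(u-\tilde u)\,\delta_{\rm obs} z_\delta\,dx\,dt$ as $-\int_0^T\int_\Omega G\,\lambda\,\delta F\,dx\,dt$ plus something higher order, using the adjoint problem \eqref{adj11}.

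First I would write down the sensitivity equation satisfied by $\delta u$: subtracting the state equation \eqref{prb1} for $F$ from that for $F+\delta F$ gives
\[
a(x)\frac{\partial(\delta u)}{\partial t} - \triangle(\delta u) = \delta F(x)\,G(x,t)\ \text{in}\ \Omega_T,\qquad \delta u(\cdot,0)=0,\qquad \partial_\nu(\delta u)=0\ \text{on}\ \partial\Omega_T,
\]
which is \emph{exactly} linear here since the problem is linear in $F$; in particular Theorem \ref{th1} applies to $\delta u$ with right-hand side $\delta F\,G$, yielding $\|\delta u\|_{L^2(0,T;L^2(\Omega))}\le C\,\|\delta F\|_{L^2(\Omega)}\|G\|_{L^2(0,T;L^\infty(\Omega))}$ and the improved regularity of \eqref{d2}. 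Next I would multiply the adjoint equation \eqref{adj11} by $\delta u$, integrate over $\Omega_T$, integrate by parts in $t$ (using $\delta u(\cdot,0)=0$ and $\lambda(\cdot,T)=0$) and in $x$ (using the homogeneous Neumann conditions on both $\delta u$ and $\lambda$), to obtain
\[
\int_0^T\int_\Omega (u-\tilde u)\,\delta u\,\delta_{\rm obs} z_\delta\,dx\,dt = \int_0^T\int_\Omega \lambda\Bigl(a\,\frac{\partial(\delta u)}{\partial t} - \triangle(\delta u)\Bigr)dx\,dt = \int_0^T\int_\Omega \lambda\,G\,\delta F\,dx\,dt.
\]
Substituting this identity into $\delta J_\gamma$ gives
\[
\delta J_\gamma(u,F) = \int_\Omega\Bigl(-\int_0^T G(x,t)\,\lambda(x,t)\,dt + \gamma(F-F_0)(x)\Bigr)\delta F(x)\,dx + R(\delta F),
\]
with $R(\delta F) = \tfrac12\int_0^T\int_\Omega(\delta u)^2\delta_{\rm obs} z_\delta\,dx\,dt + \tfrac{\gamma}{2}\int_\Omega(\delta F)^2\,dx$.

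It then remains to check that $R(\delta F) = o(\|\delta F\|_{L^2(\Omega)})$ as $\|\delta F\|_{L^2(\Omega)}\to 0$: the second term is $\tfrac{\gamma}{2}\|\delta F\|_{L^2(\Omega)}^2$, and for the first I would bound it by $C\|\delta u\|_{L^2(0,T;L^2(\Omega))}^2 \le C\|\delta F\|_{L^2(\Omega)}^2$ using the $L^2$ continuous dependence from Theorem \ref{th1} applied to the sensitivity equation (the $\delta_{\rm obs} z_\delta$ factor, being a fixed smooth weight supported at the observation points, contributes only a constant). Hence $R(\delta F) = O(\|\delta F\|_{L^2(\Omega)}^2)$, the map $\delta F\mapsto \int_\Omega\bigl(-\int_0^T G\lambda\,dt + \gamma(F-F_0)\bigr)\delta F\,dx$ is bounded and linear on $L^2(\Omega)$ (boundedness uses $\lambda\in L^2(0,T;L^2(\Omega))$ from Theorem \ref{th2} and $G\in L^2(0,T;L^\infty(\Omega))$), and by the Riesz representation its kernel $-\int_0^T G(x,\tau)\lambda(x,\tau)\,d\tau + \gamma(F-F_0)(x)$ is precisely $J_\gamma'(u,F)$, which is \eqref{gra}.

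The main obstacle I anticipate is the integration-by-parts step linking the observation term to the adjoint solution: one must justify that $\delta u$ and $\lambda$ have enough regularity for the boundary and temporal terms to vanish and for $\triangle$ to be moved across the pairing. This is where the improved regularity $u,\lambda\in L^2(0,T;H^2(\Omega))$, $\partial_t u, \partial_t\lambda\in L^2(0,T;L^2(\Omega))$ from Theorems \ref{th1} and \ref{th2} is essential; with those in hand the manipulation is the standard state–adjoint duality, but it should be carried out carefully (e.g. by first working with Galerkin approximations or by a density argument) rather than purely formally.
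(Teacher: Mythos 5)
Your argument is essentially the paper's own proof: derive the sensitivity equation for $\delta u$, use state--adjoint duality (multiplying the adjoint problem \eqref{adj11} by $\delta u$, integrating by parts in $t$ and $x$ with the zero initial/terminal and Neumann conditions) to convert the observation term into a term linear in $\delta F$, and bound the remainder by $O(\|\delta F\|_{L^2(\Omega)}^2)$ via the stability estimate of Theorem \ref{th1} applied to $\delta u$. One small correction: since the right-hand side of \eqref{adj11} is $-(u-\tilde u)\delta_{\rm obs}z_\delta$, your displayed duality identity should carry a minus sign, namely $\int_0^T\int_\Omega(u-\tilde u)\,\delta u\,\delta_{\rm obs}z_\delta\,dx\,dt = -\int_0^T\int_\Omega \lambda\,G\,\delta F\,dx\,dt$ (the paper's \eqref{fr4}); you implicitly used the correct sign when substituting into $\delta J_\gamma$, so the final formula \eqref{gra} and the rest of the argument are unaffected.
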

\begin{proof}
From the direct problem \eqref{prb1}, the function $\delta u$ satisfies the following equation
\begin{equation}\label{grad1}
\begin{cases}
a \frac{\partial(\delta u)}{\partial t} - \Delta{\delta u} = \delta F(x)G(x,t) , \quad (x,t)\in \Omega\times(0,T) \\
\delta u(x,0) = 0, \quad x\in \Omega,\\
\partial_\nu\delta u(x,t) = 0, \quad (x,t)\in \partial\Omega\times(0,T).    
\end{cases}
\end{equation}
We multiply the equation  \eqref{grad1} by $\lambda$ and the  equation
 \eqref{adj11} by $\delta u$, then integrate  over $\Omega_{T}$
 to get:
\begin{equation}\label{fr2}
\int_0^T\int_\Omega a \lambda \frac{\partial(\delta u)}{\partial t}~dxdt
- \int_0^T\int_\Omega \Delta\delta u\lambda~dxdt = \int_0^T\int_\Omega \delta F(x)G(x,t) \lambda~dxdt
\end{equation}
and \begin{eqnarray}\label{fr3}
- \int_0^T\int_\Omega  a \frac{\partial\lambda}{\partial t}\delta u ~dxdt
 -\int_0^T\int_\Omega \Delta\lambda\delta u ~dxdt= - \int_0^T\int_\Omega\delta u(u(x,t)- \tilde u(x,t))  \delta_{\rm obs} z_\delta(x)~dxdt.
\end{eqnarray}
Subtracting \eqref{fr2} from \eqref{fr3}, after doing integration by parts on the left-hand side and using initial and final conditions, we get
\begin{equation}\label{fr4} 
-\int_0^T\int_\Omega(u(x,t)-\tilde u(x,t)) \delta_{\rm obs}z_\delta(x)~  \delta u~dx dt = \int_0^T\int_\Omega\delta F(x)G(x,t)\lambda(x,t)~dx dt.
\end{equation}
Now, using the equation \eqref{fr4} in  $\delta{J_\gamma}(u,F),$ we have
\begin{align*}
\delta J_\gamma(u,F) &=  \frac{1}{2}\int_0^T\int_\Omega(\delta u)^2 \delta_{\rm obs}z_\delta(x) ~dx dt + \int_0^T\int_\Omega - \delta F(x)G(x,t)\lambda(x,t)~dx dt\\
& + \frac{\gamma}{2}\int_\Omega(\delta F(x))^2 ~dx + \gamma \int_\Omega\delta F(x)(F-F_0)(x)~dx
\end{align*}
and
\begin{align*}
& \left| \delta J_\gamma(u,F) - \int_\Omega\left\{ \left( - \int_0^TG(x,t)\lambda(x,t) ~dt\right) + \gamma(F-F_0)(x)\right\}\delta F(x)~dx\right| \\ 
& \leq \frac{1}{2}\int_0^T\int_\Omega (\delta u)^2|\delta_{\rm obs}z_\delta(x)| ~dx dt + \frac{\gamma}{2}\int_\Omega(\delta F(x))^2 ~dx.
\end{align*}
Now, using the stability estimate  \eqref{d1}, which holds for $\delta u,$   we get
\begin{equation*}\label{fr11}
\begin{split}
& \left| \delta J_\gamma(u,F) - \int_\Omega\left\{ \left(- \int_0^T G(x,t)\lambda(x,t) dt\right) + \gamma(F-F_0)(x)\right\}\delta F(x)dx\right|\\
& \qquad \leq  O(\parallel \delta F \parallel^2_{L^2(\Omega)}).
\end{split}
\end{equation*}
Hence, the functional $J_\gamma(u,F)$ is Fr\'echet differentiable and the Fr\'echet derivative $J^\prime_\gamma(u,F)(x)$ is given by \eqref{gra}. Hence the proof.  \qed
\end{proof}
\subsection{Existence of Minimizer for the Functional}
\begin{Theorem}
Suppose Assumption \ref{assm} holds true. Then, for $\gamma>0$, there exists a unique minimizer of the functional $J_\gamma(u,F)$.
\end{Theorem}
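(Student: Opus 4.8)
The plan is to apply the direct method of the calculus of variations, using that $F\mapsto u(\cdot,\cdot;F)$ is linear and bounded, that $\mathcal F$ is convex and weakly sequentially compact in $L^2(\Omega)$, and that the Tikhonov term makes $F\mapsto J_\gamma(u(\cdot,\cdot;F),F)$ strongly convex whenever $\gamma>0$. First I would record that, by Assumption \ref{assm}, $\mathcal F$ is nonempty, bounded in $L^2(\Omega)$ by $d_3$, and cut out by the pointwise bounds $d_1\le F\le d_2$ together with the norm bound $\|F\|_{L^2(\Omega)}\le d_3$; each of these is a closed convex constraint, so $\mathcal F$ is closed, bounded and convex, hence weakly sequentially compact. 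Since $J_\gamma\ge 0$, the infimum $m:=\inf_{F\in\mathcal F}J_\gamma(u(\cdot,\cdot;F),F)$ is finite; choosing a minimizing sequence $(F_n)\subset\mathcal F$ and passing to a subsequence, I may assume $F_n\rightharpoonup F^{*}$ weakly in $L^2(\Omega)$ with $F^{*}\in\mathcal F$.

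The core step is the weak continuity of the control-to-state map. Writing $u_n:=u(\cdot,\cdot;F_n)$, the estimates of Theorem \ref{th1} together with $\|F_n\|_{L^2(\Omega)}\le d_3$ bound $(u_n)$ in $L^2(0,T;H^1(\Omega))$ and $(\partial_t u_n)$ in $L^2(0,T;H^1(\Omega)^*)$; extracting a further subsequence, $u_n\rightharpoonup w$ in $L^2(0,T;H^1(\Omega))$ with $\partial_t u_n\rightharpoonup \partial_t w$ in $L^2(0,T;H^1(\Omega)^*)$. To pass to the limit in the weak formulation of \eqref{prb1} one only needs $F_nG\rightharpoonup F^{*}G$ in $L^2(0,T;L^2(\Omega))$, which follows from $F_n\rightharpoonup F^{*}$ in $L^2(\Omega)$ and $G\in L^2(0,T;L^\infty(\Omega))$ by testing against a fixed $\psi\in L^2(0,T;L^2(\Omega))$ and applying dominated convergence in $t$ (the inner integrals converge for a.e. $t$ and are dominated by $d_3\|G(\cdot,t)\|_{L^\infty(\Omega)}\|\psi(\cdot,t)\|_{L^2(\Omega)}\in L^1(0,T)$); the initial condition passes to the limit because $u_n\in C([0,T];L^2(\Omega))$ with weakly continuous trace at $t=0$. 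Hence $w$ solves \eqref{prb1} with source $F^{*}G$, so by the uniqueness in Theorem \ref{th1}, $w=u(\cdot,\cdot;F^{*})=:u^{*}$, and by uniqueness of the weak limit the whole sequence satisfies $u_n\rightharpoonup u^{*}$.

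Next I would use weak lower semicontinuity. Viewed as a function of $u$, the first term of $J_\gamma$ in \eqref{L1} is the square of a seminorm with the nonnegative weight $\delta_{\rm obs}z_\delta$, hence convex and continuous on $L^2(0,T;L^2(\Omega))$ and therefore weakly lower semicontinuous; the Tikhonov term is convex and continuous in $F$, hence weakly lower semicontinuous on $L^2(\Omega)$. Consequently $J_\gamma(u^{*},F^{*})\le\liminf_n J_\gamma(u_n,F_n)=m$, and since $F^{*}\in\mathcal F$ this forces $J_\gamma(u^{*},F^{*})=m$, i.e. $F^{*}$ is a minimizer. For uniqueness, let $F_1,F_2\in\mathcal F$ be minimizers and set $F_\theta:=\theta F_1+(1-\theta)F_2$ for $\theta\in(0,1)$; then $F_\theta\in\mathcal F$, $u(\cdot,\cdot;F_\theta)=\theta u(\cdot,\cdot;F_1)+(1-\theta)u(\cdot,\cdot;F_2)$ by linearity, convexity of the observation term gives the usual bound, and the identity $\|F_\theta-F_0\|_{L^2(\Omega)}^2=\theta\|F_1-F_0\|_{L^2(\Omega)}^2+(1-\theta)\|F_2-F_0\|_{L^2(\Omega)}^2-\theta(1-\theta)\|F_1-F_2\|_{L^2(\Omega)}^2$ yields $J_\gamma(u(\cdot,\cdot;F_\theta),F_\theta)\le m-\frac{\gamma}{2}\theta(1-\theta)\|F_1-F_2\|_{L^2(\Omega)}^2$; since $J_\gamma\ge m$, this gives $F_1=F_2$.

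I expect the main obstacle to be the weak continuity of the control-to-state map, i.e. justifying $u_n\rightharpoonup u^{*}$ together with $F_nG\rightharpoonup F^{*}G$ and the passage to the limit in the initial condition; this is the only place where the linear structure of \eqref{prb1} and the regularity of Theorem \ref{th1} are genuinely used, the rest being routine convexity and compactness. A secondary point to handle carefully is the interpretation of the observation term: as written (with the smoothing $z_\delta$) it is a continuous convex functional of $u\in L^2(0,T;L^2(\Omega))$ and the argument above applies verbatim, but if $\delta_{\rm obs}$ were a genuine point evaluation one would instead invoke the appropriate compact trace/evaluation estimates for $u$ furnished by Theorem \ref{th1}.
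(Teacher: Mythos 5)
Your proof is correct, but it follows a genuinely different route from the paper. The paper's argument stays inside the adjoint framework it has already built: starting from the identity \eqref{in1} for the unregularized misfit, it substitutes the adjoint identity \eqref{fr4} so that the difference $J(u,F)-J(u,F_n)$ is controlled by the linear functional $F\mapsto\int_\Omega\bigl(\int_0^T G\lambda\,dt\bigr)F\,dx$ with $\int_0^T G\lambda\,dt\in L^2(\Omega)$, and then weak convergence $F_n\rightharpoonup F$ alone gives lower semicontinuity of $J_\gamma$ along the minimizing sequence, with existence and uniqueness concluded by invoking strict convexity (linearity of the map $F\mapsto u$) and the generalized Weierstrass theorem of Zeidler; no limit passage in the state equation is carried out. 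You instead run the standard direct method: weak sequential compactness of the closed convex bounded set $\mathcal F$, weak continuity of the control-to-state map proved from the energy bounds of Theorem \ref{th1} and a passage to the limit in the weak formulation (with the dominated-convergence argument for $F_nG\rightharpoonup F^{*}G$), weak lower semicontinuity of the two convex continuous terms in \eqref{L1}, and an explicit strong-convexity identity for the Tikhonov term to get uniqueness. Your version is more self-contained and quantitative -- it does not need the adjoint state or an external minimization theorem, and it isolates exactly where $\gamma>0$ enters uniqueness -- at the price of the extra work of justifying $u_n\rightharpoonup u(F^{*})$; the paper's version is shorter given that \eqref{fr4} is already available, though it is terser about the convergence of the states and leans on the cited Weierstrass theorem. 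Your closing caveat about $\delta_{\rm obs}$ is well taken: like the paper's own estimates, both arguments implicitly treat $\delta_{\rm obs}z_\delta$ as a bounded nonnegative weight rather than a genuine point evaluation.
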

\begin{proof}
Since the functional $J_\gamma(u,F)$ is bounded below, one can argue that there exists a minimizing sequence $\{F_n\}\in \mathcal F$ converges weakly to an admissible source $F\in \mathcal F$.  Note that
 the following identity holds for the non regularized Tikhonov functional $J(u,F)$ which can be obtained by replacing $\gamma=0$ in \eqref{L1}:
 \begin{equation}\label{in1}
 \begin{split}
& J(u,F) = J(u,F_n) - \frac{1}{2}\int_0^T\int_\Omega[u(x,t;F_n) - u(x,t;F)]^2\delta_{\rm obs} z_\delta(x) ~dxdt \\
 & - \int_0^T\int_\Omega [u(x,t;F) - \tilde u(x,t)]\delta_{\rm obs}z_\delta(x)\delta u ~dxdt,
 \end{split}
 \end{equation}
where $\delta u = u(x,t;F_n) - u(x,t;F)$. Replacing the last integral of \eqref{in1} by the identity in \eqref{fr4}, we have
 \begin{equation*}\label{in2}
 \begin{split}
 J(u,F) \leq J(u,F_n) + \int_0^T\int_\Omega\delta F(x)G(x,t)\lambda(x,t) ~dxdt,
 \end{split}
 \end{equation*}
 where $\delta F = F_n-F$.
 By H\"older's inequality, we have $\int_0^T\lambda(x,\cdot)G(x,\cdot)dx \in L^{2}(\Omega)$.
Since $F_n\ \rightharpoonup F$ in $L^2(\Omega)$, we have
\begin{equation*}
\begin{split}
\int_\Omega\left(\int_0^T\lambda(x,t)G(x,t)dt\right)F_n(x) ~dx \rightarrow \int_\Omega\left(\int_0^T\lambda(x,t)G(x,t)dt\right)F(x)~dx, \quad \mbox{as} \quad n\rightarrow \infty.
\end{split}
\end{equation*}
This implies that $J(u,F) \leq \lim_{n\rightarrow \infty}J(u,F_n).$ Now, as $F_n\ \rightharpoonup F$ in $L^2(\Omega)$,  it is lower semi-continuous, that is, $\parallel F \parallel_{L^2(\Omega)}\leq \lim \inf_{n\rightarrow \infty}\parallel F_n \parallel_{L^2(\Omega)}$, so it will implies that $J_\gamma(u,F) \leq  \lim \inf_{n\rightarrow \infty} J_\gamma(u,F_n)$  in $\mathcal F$, hence $J_\gamma(u,F)$ is lower semi-continuous. Moreover, since the inverse problem is linear, the functional $J_\gamma(u,F)$ is strictly convex.
So, using the generalized Weierstrass theorem (see \cite{zeidler2012applied}, Theorem 2D), we conclude that the regularized functional $J_\gamma(u,F)$ has a unique minimizer. Hence, the proof. \qed
 \end{proof}
\subsection{Stability Analysis for the Inverse Problem}
\begin{Theorem}(Variational inequality)
Suppose $u$ and $\lambda$ are the solutions of \eqref{prb1} and \eqref{ad1} respectively, and $F^*$ be the solution to the minimum problem of \eqref{L1}. Then the following variational inequality holds:
\begin{eqnarray}\label{stb1}
\lefteqn{ -\int_0^T\int_\Omega(k(x) - F^*(x))G(x,t)\lambda(x,t) ~dx dt }\\
 &&+ \gamma \int_\Omega(F^*(x) - F_0(x))(k(x)-F^*(x))~dx \geq 0, \ \  \forall k\in \mathcal F.\nonumber
\end{eqnarray}
\end{Theorem}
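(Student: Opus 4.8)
The plan is to derive the variational inequality as the first-order necessary optimality condition for the constrained minimization of $J_\gamma$ over the closed convex set $\mathcal F$. First I would fix any $k \in \mathcal F$ and exploit convexity of $\mathcal F$: since $F^* \in \mathcal F$ is a minimizer and $\mathcal F$ is convex, the point $F^* + \theta(k - F^*) = (1-\theta)F^* + \theta k$ belongs to $\mathcal F$ for every $\theta \in [0,1]$. Hence the scalar function $\phi(\theta) := J_\gamma\big(u(\cdot,\cdot;F^* + \theta(k-F^*)),\, F^* + \theta(k-F^*)\big)$ attains its minimum over $[0,1]$ at $\theta = 0$, which forces the one-sided derivative $\phi'(0^+) \geq 0$.

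Next I would compute $\phi'(0^+)$ using the Fr\'echet differentiability of $J_\gamma$ established in the previous theorem, together with the chain rule. By that theorem the Fr\'echet derivative of $J_\gamma$ in the direction $\delta F = k - F^*$ equals
\begin{equation*}
\big(J_\gamma'(u,F^*),\, k - F^*\big) = \int_\Omega \left( -\int_0^T G(x,\tau)\lambda(x,\tau)\, d\tau + \gamma\,(F^* - F_0)(x)\right)(k(x) - F^*(x))\, dx,
\end{equation*}
where $\lambda$ is the adjoint state associated with $F^*$. Setting this expression $\geq 0$ and rearranging the time integral gives exactly \eqref{stb1}. Equivalently, one can bypass the chain rule and argue directly from the increment formula for $\delta J_\gamma(u,F^*)$ derived in the proof of the gradient theorem: with $\delta F = \theta(k-F^*)$ one has $\delta J_\gamma = \theta\,(J_\gamma'(u,F^*), k-F^*) + O(\theta^2)$, and dividing by $\theta > 0$ and letting $\theta \to 0^+$ yields the same conclusion. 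I would also note that $\int_0^T \lambda(x,t) G(x,t)\, dt \in L^2(\Omega)$ (as observed earlier via H\"older's inequality using $G \in L^2(0,T;L^\infty(\Omega))$ and $\lambda \in L^2(0,T;L^2(\Omega))$), so all integrals in \eqref{stb1} are finite and the manipulations are justified.

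The only genuinely delicate point is the legitimacy of differentiating $\phi$ at $\theta = 0^+$, i.e. that the directional derivative of the composite map $\theta \mapsto J_\gamma(u(\cdot,\cdot;F^*+\theta\,\delta F), F^*+\theta\,\delta F)$ exists and is given by the Fr\'echet derivative formula; this rests on the linearity of the map $F \mapsto u(\cdot,\cdot;F)$ (so that $u(\cdot,\cdot;F^*+\theta\,\delta F) - u(\cdot,\cdot;F^*) = \theta\,\delta u$ exactly, where $\delta u$ solves \eqref{grad1} with right-hand side $\delta F\, G$) and on the stability estimate \eqref{d1} applied to $\delta u$, which controls the quadratic remainder $\tfrac12\int_0^T\int_\Omega (\theta\,\delta u)^2 \delta_{\rm obs} z_\delta\, dx\, dt = O(\theta^2 \|\delta F\|_{L^2(\Omega)}^2)$. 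Everything else is routine: expand, isolate the linear-in-$\theta$ term, divide by $\theta$, pass to the limit, and use that $k \in \mathcal F$ was arbitrary.
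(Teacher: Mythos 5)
Your proposal is correct and takes essentially the same route as the paper: both obtain \eqref{stb1} as the first-order necessary optimality condition $\frac{d}{d\beta}J_\gamma(u_\beta,F_\beta)\big|_{\beta=0}\geq 0$ along the convex segment $F_\beta=F^*+\beta(k-F^*)\in\mathcal F$, with the adjoint state used to represent the derivative. The only (cosmetic) difference is that you cite the already-proved Fr\'echet derivative formula \eqref{gra} directly, while the paper re-derives that representation by introducing the sensitivity function $\eta$ solving \eqref{sens1} and pairing it with $\lambda$ via integration by parts.
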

\begin{proof}
 Let $k\in \mathcal F,$  $0\leq \beta \leq 1$ and $F_\beta(x) = F^*(x) + \beta(k(x)-F^*(x))\in \mathcal F.$  Let $(u_\beta,F_\beta)$ be the solution of the direct problem.  Then the corresponding objective functional is 
 \begin{equation*}\label{st2}
 \begin{split}
 J_\gamma(u_\beta,F_\beta) = \frac{1}{2}\int_\Omega\int_0^T(u_\beta(x,t) - \tilde u(x,t))^2  \delta_{\rm obs} z_\delta(x)  ~dx dt + \frac{\gamma}{2}\int_\Omega(F_\beta(x)-F_0(x))^2~dx.  
 \end{split}
 \end{equation*} 
Since $J_\gamma(u_\beta,F_\beta)$ is Fr\'echet differentiable, we have
\begin{equation}\label{st5}
\begin{split}
&\frac{d}{d\beta}J_\gamma(u_\beta,F_\beta)\Big |_{\beta =0} =   \int_\Omega\int_0^T \left[(u_\beta(x,t) - \tilde u(x,t))  \delta_{\rm obs} z_\delta(x) \frac{\partial u_\beta}{\partial \beta}\right]_{\beta=0}~dxdt\\
& \qquad  + \gamma \int_\Omega(F_\beta(x) - F_0(x))\Big |_{\beta=0}(k(x)-F^*(x))~dx.
\end{split}
\end{equation}
By taking $\eta =\frac{\partial u_\beta}{\partial \beta}\Big |_{\beta=0}$ and $u = u_\beta\Big |_{\beta=0}$, we obtain the following system satisfied by $\eta$:
\begin{equation}\label{sens1}
\begin{split}
\begin{cases}
a(x)\eta_t - \Delta\eta = (k(x) - F^*(x))G(x,t), \quad (x,t)\in \Omega\times(0,T),\\
\eta(x,0) = 0, \quad  x\in\Omega, \\
\partial_\nu\eta(x,t) = 0, \quad (x,t)\in \partial\Omega\times(0,T).
\end{cases}
\end{split}
\end{equation}   
\noindent Since $F^*$ is an optimal solution, we have
\begin{align*}
\begin{split}
\frac{d}{d\beta}J_\gamma\left(u_\beta,F^*(x)+\beta(K - F^*)(x)\right)\Big |_{\beta=0}\geq 0.
\end{split}
\end{align*}
From \eqref{st5}, we get the inequality 
\begin{eqnarray}\label{st6}
\int_\Omega\int_0^T(u(x,t) - \tilde u(x,t))  \delta_{\rm obs} z_\delta(x)\eta(x,t)~dx dt + \gamma \int_\Omega(F^*(x) - F_0(x))(k(x)-F^*(x))~dx\geq 0.  
\end{eqnarray}
Multiply system \eqref{sens1} by $\lambda,$ the solution of adjoint system \ref{adj11} and integrating by parts, we get
\begin{eqnarray}\label{st7}
-\int_\Omega\int_0^T(u(x,t) - \tilde u(x,t))  \delta_{\rm obs} z_\delta(x) \eta(x,t)~dx dt = \int_0^T\int_\Omega(k(x)-F^*(x))G(x,t)\lambda(x,t)~dxdt. 
\end{eqnarray}
Substituting \eqref{st7} in \eqref{st6}, we get the variational inequality \eqref{stb1}. \qed \\
\end{proof}

Let $(u,F)$ and $(\bar u,\bar F)$ be two solutions of the direct problem \eqref{prb1}. Then $U = u - \bar u$ and $\hat F = F - \bar F$ satisfy the following system
\begin{equation}\label{sens2}
\begin{cases}
a(x)\frac{\partial U}{\partial t} - \Delta U = \hat F(x)G(x,t), \quad (x,t)\in\Omega \times (0,T),\\
U(x,0) =0, \quad x\in \Omega, \\
\partial_\nu U(x,t) = 0, \quad \partial\Omega\times(0,T).   
\end{cases}
\end{equation}
Similarly, let $\lambda$ and $\bar\lambda$ be the two adjoint
 solutions corresponding to $(u,F)$ and $(\bar u,\bar F)$ respectively, then $\Lambda = \lambda - \bar\lambda$ will satisfies the following system 
 \begin{equation}\label{sens3}
\begin{cases}
 -a(x)\frac{\partial \Lambda}{\partial t} - \Delta\Lambda = -\left((u-\bar u)-(\tilde u- \tilde{\bar u})\right) \delta_{\rm obs} z_\delta(x), \quad (x,t)\in \Omega \times (0,T), \\
 \Lambda(x,T) = 0, \quad x\in \Omega,  \\
 \partial_\nu\Lambda(x,t) = 0, \quad (x,t)\in \partial\Omega\times(0,T),\\
\end{cases}
 \end{equation}
where $\tilde{u}$ and $\tilde{\bar u}$ are the given partial boundary measurements.
Next, we derive  \emph{a priori} estimates for the equations \eqref{sens2} and \eqref{sens3} which are
 essential to prove the stability result.
\begin{Lemma}
Assume $F$, $\bar F\in \mathcal F$ and let $U$ be the solution of the system \eqref{sens2}. 
Then, we have  
\begin{equation}\label{stb8}
\int_\Omega|\sqrt a U|^2 dx + 2\int_0^T\int_\Omega|\nabla U|^2 dx dt \leq C_1(a,T)\parallel \hat F \parallel ^2_{L^2(\Omega)} \parallel G \parallel^2_{L^2(0,T; L^\infty(\Omega)}.  
\end{equation}
where $C_1(a,T)$ is given in Theorem \ref{th1}.
\end{Lemma}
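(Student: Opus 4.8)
The plan is to recognize that the system \eqref{sens2} for the difference $U=u-\bar u$ is structurally identical to the direct problem \eqref{prb1}, with the space-dependent source $F$ replaced by $\hat F=F-\bar F$ and the space--time factor $G$ unchanged. Since $F,\bar F\in\mathcal F\subset L^2(\Omega)$, we have $\hat F\in L^2(\Omega)$. It is worth flagging the one apparent subtlety: $\hat F$ need not satisfy the pointwise bounds $d_1\le\hat F\le d_2$, since it is a difference of admissible sources. This is harmless, because the energy estimate \eqref{d1} of Theorem \ref{th1} uses only $\|F\|_{L^2(\Omega)}$ together with the hypotheses on $a$ and $G$, and never the positivity or upper bound of the source. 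Consequently \eqref{stb8} is exactly \eqref{d1} applied to $\hat F$, and in particular the constant is the very same $C_1(a,T)=1+\frac{T}{a_{\min}}e^{T/a_{\min}}$.

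For completeness I would reproduce the short energy argument verbatim on \eqref{sens2}. Multiply the first equation of \eqref{sens2} by $U$, integrate over $\Omega$, integrate by parts, and use the homogeneous Neumann condition $\partial_\nu U=0$ to get
\[
\frac12\frac{d}{dt}\int_\Omega|\sqrt a\,U|^2\,dx+\int_\Omega|\nabla U|^2\,dx=\int_\Omega\hat F\,G\,U\,dx .
\]
Integrating in time over $[0,t]$, using $U(\cdot,0)=0$, and applying the Cauchy--Schwarz inequality on the right-hand side yields
\[
\frac12\int_\Omega|\sqrt a\,U|^2\,dx+\int_0^t\!\!\int_\Omega|\nabla U|^2\,dx\,ds\le\|\hat F G\|_{L^2(0,t;L^2(\Omega))}\,\|U\|_{L^2(0,t;L^2(\Omega))}.
\]
Then I would estimate $\|\hat F G\|_{L^2(0,T;L^2(\Omega))}\le\|\hat F\|_{L^2(\Omega)}\|G\|_{L^2(0,T;L^\infty(\Omega))}$ exactly as in \eqref{de1}, apply Young's inequality to split the right-hand side, bound $\|U\|^2_{L^2(0,t;L^2(\Omega))}$ by $a_{\min}^{-1}\int_0^t\!\int_\Omega a\,U^2\,dx\,ds$, take the supremum over $t\in[0,T]$, and close with Gr\"onwall's inequality to recover the stated constant $C_1(a,T)$.

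There is essentially no obstacle here: the lemma is a direct corollary of Theorem \ref{th1} applied to the difference equation, and the only point requiring a sentence of justification is that the $L^2$-based estimate \eqref{d1} does not rely on the membership $\hat F\in\mathcal F$ but merely on $\hat F\in L^2(\Omega)$. The same observation, incidentally, would allow one to transfer the higher-regularity estimate \eqref{d2} to $U$ should it be needed in the subsequent stability analysis.
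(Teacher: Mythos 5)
Your proposal is correct and follows exactly the paper's route: the paper simply notes that the proof "follows from the same line of arguments of Theorem \ref{th1}", i.e.\ the energy estimate is rerun for the difference system with source $\hat F G$, which is what you do. Your extra remark that only $\|\hat F\|_{L^2(\Omega)}$ (and not the pointwise bounds defining $\mathcal F$) enters the estimate is a sound and worthwhile clarification, but it does not change the argument.
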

\begin{proof}
The proof follows from the same line of arguments of Theorem \ref{th1}. \qed
\end{proof}
\begin{Lemma}\label{stb9}
Let $\Lambda$ be the solution of the system \eqref{sens3}, then the following estimate holds
 \begin{equation}\label{stb10}
 \begin{split}
 &\int_\Omega|\sqrt a\Lambda|^2 ~dx + 2\int_0^T\int_\Omega|\nabla \Lambda|^2~dxdt \leq  C_1(a,T) \\
 & \times 2\left[\parallel (u-\bar u) \delta_{\rm obs}z_\delta \parallel^2_{L^2(0,T; L^2(\Omega))}
  + \parallel (\tilde u-\tilde{\bar u})\delta_{\rm obs} z_\delta \parallel^2_{L^2(0,T; L^2(\Omega))}\right], 
 \end{split}
 \end{equation}
 where $C_1(a,T)$ is given in Theorem \ref{th1}.
\end{Lemma}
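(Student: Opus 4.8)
The plan is to reproduce the energy argument of Theorem~\ref{th2} for the backward parabolic system \eqref{sens3}, which is structurally identical to the adjoint problem \eqref{adj11}: the only change is the source term, which is now
\[
g(x,t):=\big((u-\bar u)-(\tilde u-\tilde{\bar u})\big)(x,t)\,\delta_{\rm obs}z_\delta(x)
\]
in place of $-(u-\tilde u)\delta_{\rm obs}z_\delta$. Existence and uniqueness of $\Lambda\in L^2(0,T;H^1(\Omega))$ with $\partial_t\Lambda\in L^2(0,T;(H^1(\Omega))^*)$ follow exactly as in Theorem~\ref{th2}, so I would concentrate on deriving the estimate \eqref{stb10}.

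First I would multiply \eqref{sens3} by $\Lambda$, integrate over $\Omega$, integrate by parts in the Laplacian term and use $\partial_\nu\Lambda=0$ to obtain the identity
\[
-\frac12\frac{d}{dt}\int_\Omega|\sqrt a\,\Lambda|^2\,dx+\int_\Omega|\nabla\Lambda|^2\,dx=-\int_\Omega g\,\Lambda\,dx .
\]
Bounding the right-hand side by Young's inequality $2ab\le a^2+b^2$ and using $\int_\Omega\Lambda^2\,dx\le a_\min^{-1}\int_\Omega|\sqrt a\,\Lambda|^2\,dx$ gives, after multiplying by $2$,
\[
-\frac{d}{dt}\int_\Omega|\sqrt a\,\Lambda|^2\,dx+2\int_\Omega|\nabla\Lambda|^2\,dx\le \int_\Omega|g|^2\,dx+\frac{1}{a_\min}\int_\Omega|\sqrt a\,\Lambda|^2\,dx ,
\]
which is the exact analogue of \eqref{drp34}. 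Dropping the nonnegative gradient term and applying the backward Gr\"onwall inequality with terminal condition $\Lambda(\cdot,T)=0$ yields $\int_\Omega|\sqrt a\,\Lambda(t)|^2\,dx\le e^{(T-t)/a_\min}\|g\|^2_{L^2(t,T;L^2(\Omega))}$, exactly as in \eqref{drp35}. Reinserting this bound into the integral of the displayed differential inequality over $[t,T]$ and letting $t\to0$ produces
\[
\int_\Omega|\sqrt a\,\Lambda|^2\,dx+2\int_0^T\!\!\int_\Omega|\nabla\Lambda|^2\,dx\,dt\le C_1(a,T)\,\|g\|^2_{L^2(0,T;L^2(\Omega))},
\]
with $C_1(a,T)=1+\frac{T}{a_\min}e^{T/a_\min}$ as in Theorem~\ref{th1}.

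It then remains to bound $\|g\|^2_{L^2(0,T;L^2(\Omega))}$. Applying the elementary inequality $(\alpha-\beta)^2\le 2\alpha^2+2\beta^2$ pointwise gives
\[
\|g\|^2_{L^2(0,T;L^2(\Omega))}\le 2\,\|(u-\bar u)\delta_{\rm obs}z_\delta\|^2_{L^2(0,T;L^2(\Omega))}+2\,\|(\tilde u-\tilde{\bar u})\delta_{\rm obs}z_\delta\|^2_{L^2(0,T;L^2(\Omega))},
\]
and substituting this into the previous line yields precisely \eqref{stb10}. There is no genuine difficulty here: the argument is essentially a verbatim transcription of the proof of Theorem~\ref{th2}, and the only point requiring care is tracking the constant $2$ introduced when the combined source $g$ is split, so that it appears in front of the bracket in \eqref{stb10} rather than being absorbed into $C_1(a,T)$; one should also note that both terms $(u-\bar u)\delta_{\rm obs}z_\delta$ and $(\tilde u-\tilde{\bar u})\delta_{\rm obs}z_\delta$ lie in $L^2(0,T;L^2(\Omega))$, since $u,\bar u\in L^2(0,T;L^2(\Omega))$ by Theorem~\ref{th1}, the boundary measurements are taken in $L^2$, and $z_\delta\in C^\infty(\Omega)$.
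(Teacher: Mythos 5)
Your proof is correct and follows essentially the same route as the paper: the paper simply invokes the energy estimate of Theorem \ref{th2} for system \eqref{sens3} with the combined right-hand side $\bigl((u-\bar u)-(\tilde u-\tilde{\bar u})\bigr)\delta_{\rm obs}z_\delta$ and then splits this source term, exactly as you do. If anything, your write-up is the more careful one, since you track the factor $2$ from $(\alpha-\beta)^2\le 2\alpha^2+2\beta^2$ explicitly, whereas the paper's displayed splitting inequality omits it even though it appears in the statement of \eqref{stb10}.
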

\begin{proof}
The proof is again a direct consequence of Theorem\ref{th2}, since the right-hand side of the system \eqref{sens3} can be estimated as follows:
\begin{eqnarray*}
\lefteqn{\parallel ((u-{\bar u})- (\tilde u-\tilde{\bar u}))\delta_{\rm obs} z_\delta(x) \parallel^2_{L^2(0,T; L^2(\Omega))}} \\
\ && \leq  \parallel (u-\bar u) \delta_{\rm obs} z_\delta(x)   \parallel^2_{L^2(0,T; L^2(\Omega))} 
  +  \parallel (\tilde u-\tilde{\bar u}) \delta_{\rm obs} z_\delta(x) \parallel^2_{L^2(0,T; L^2(\Omega))}.  \qed
\end{eqnarray*} 
\end{proof}
\begin{Theorem}\label{stabt}
Let $F$ and $\bar F$ be the unique minimizers of the functional $J_\gamma(u,F)$ corresponding to the measurements $\bar{u}$ and $\tilde{\bar{u}}$ respectively. Then there exists a time $T_0>0$ and a constant $L(T_0)>0$ such that the following stability estimate holds:
 \begin{eqnarray}\label{stb110}
 \parallel F-\bar F\parallel_{L^2(\Omega)} \leq \frac{2(L(T_0))^{\frac{1}{2}}}{\gamma}\parallel G \parallel_{L^2(0,T; L^\infty(\Omega))}   \parallel (\bar u(x,t)-\tilde{\bar u}(x,t)) \delta_{\rm obs}z_\delta(x) \parallel_{L^2(0,T; L^2(\Omega))},
 \end{eqnarray}
where
$L(T) = \frac{T C_1(a,T)}{a_\min}$ and $C_1(a,T)$ is given in Theorem \ref{th1}.
\end{Theorem}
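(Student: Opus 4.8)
The plan is to exploit the two variational inequalities satisfied by the minimizers $F$ and $\bar F$, add them, and then control the resulting cross terms using the \emph{a priori} estimates from the preceding lemmas. First I would write the variational inequality \eqref{stb1} for $F^* = F$ (with adjoint $\lambda$, state $u$, measurement $\tilde u$) testing against $k = \bar F \in \mathcal F$, and then write the analogous inequality for $F^* = \bar F$ (with adjoint $\bar\lambda$, state $\bar u$, measurement $\tilde{\bar u}$) testing against $k = F \in \mathcal F$. Adding the two inequalities, the regularization terms combine to produce $-\gamma \int_\Omega (F - \bar F)^2\,dx = -\gamma\|\hat F\|_{L^2(\Omega)}^2$ (after rearrangement), while the data-misfit terms combine into an integral involving $\hat F(x)\,G(x,t)\,\Lambda(x,t)$, where $\Lambda = \lambda - \bar\lambda$ solves \eqref{sens3}. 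This yields an inequality of the schematic form
\begin{equation*}
\gamma\,\|\hat F\|_{L^2(\Omega)}^2 \le \left| \int_0^T\!\!\int_\Omega \hat F(x)\,G(x,t)\,\Lambda(x,t)\,dx\,dt \right|.
\end{equation*}

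Next I would estimate the right-hand side. By Cauchy--Schwarz in $x$ and then Hölder in $t$ (using $G \in L^2(0,T;L^\infty(\Omega))$ as in \eqref{de1}), the right-hand side is bounded by $\|\hat F\|_{L^2(\Omega)}\,\|G\|_{L^2(0,T;L^\infty(\Omega))}\,\big(\sup_t \|\Lambda(t)\|_{L^2(\Omega)}\big)$ or, more robustly, by a term controlled through the energy estimate \eqref{stb10} for $\Lambda$. Here is the point where the two states $U = u - \bar u$ must be eliminated: the bound \eqref{stb10} involves both $\|(u - \bar u)\delta_{\rm obs} z_\delta\|_{L^2(0,T;L^2(\Omega))}$ and $\|(\tilde u - \tilde{\bar u})\delta_{\rm obs} z_\delta\|_{L^2(0,T;L^2(\Omega))}$, and the first of these must in turn be bounded using Lemma with \eqref{stb8} — giving $\|U\|$ in terms of $\|\hat F\|_{L^2(\Omega)}\|G\|_{L^2(0,T;L^\infty(\Omega))}$, with constant $C_1(a,T)$. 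Tracking all the constants, one finds that $\|\Lambda\|^2$ is bounded by $C_1(a,T)$ times a sum of a term proportional to $C_1(a,T)\|\hat F\|^2\|G\|^2$ and the measurement-difference term, and the factor $T/a_\min$ entering through the time integration produces the quantity $L(T) = T\,C_1(a,T)/a_\min$.

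The upshot is an inequality of the form $\gamma\|\hat F\|_{L^2(\Omega)}^2 \le \|\hat F\|_{L^2(\Omega)}\big( c_1(T)\|\hat F\|_{L^2(\Omega)}\|G\| + c_2(T)\|G\|\,\|(\bar u - \tilde{\bar u})\delta_{\rm obs}z_\delta\|\big)$, where $c_1(T) \to 0$ as $T \to 0$ because it is built from powers of $L(T)$. Dividing through by $\|\hat F\|_{L^2(\Omega)}$ (assuming it is nonzero, else the estimate is trivial) gives $\gamma\|\hat F\| \le c_1(T)\|G\|\,\|\hat F\| + c_2(T)\|G\|\,\|(\bar u - \tilde{\bar u})\delta_{\rm obs}z_\delta\|$. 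The main obstacle — and the reason the theorem is only a \emph{local} stability result with a threshold time $T_0$ — is that one must choose $T_0$ small enough that $c_1(T_0)\|G\| \le \gamma/2$ (this is where convexity of the functional and the smallness of $L(T_0)$ are essential), so that the $\|\hat F\|$ term can be absorbed into the left-hand side, leaving $\tfrac{\gamma}{2}\|\hat F\| \le c_2(T_0)\|G\|\,\|(\bar u - \tilde{\bar u})\delta_{\rm obs}z_\delta\|$; identifying $c_2(T_0)$ with $2(L(T_0))^{1/2}$ after the constant bookkeeping gives exactly \eqref{stb110}. I expect the genuinely delicate part to be this constant tracking: making sure the $C_1(a,T)$ factors compose correctly through Lemmas \eqref{stb8} and \eqref{stb10} so that the final constant is precisely $2(L(T_0))^{1/2}/\gamma$ and the absorption threshold on $T_0$ is well-defined.
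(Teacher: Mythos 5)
Your proposal follows essentially the same route as the paper: add the two variational inequalities (with $(k,F^*)=(\bar F,F)$ and $(F,\bar F)$) to get $\gamma\|\hat F\|^2\le\big|\iint\hat F\,G\,\Lambda\big|$, bound the right side via H\"older and \eqref{de1}, control $\|\Lambda\|$ through \eqref{stb10} and then $\|U\|$ through \eqref{stb8}, and finally choose $T_0$ small so the $\|\hat F\|$-term is absorbed, producing $L(T)=TC_1(a,T)/a_{\min}$ and the constant $2(L(T_0))^{1/2}/\gamma$. The only cosmetic difference is that the paper squares the inequality and absorbs at the quadratic level (via the condition $K(T_0)/\gamma^2\le 1/2$) rather than dividing by $\|\hat F\|$ first, which is an equivalent bookkeeping choice.
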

\begin{proof}
Replace $k$ by $\bar F$ and $F^*$ by $F$ in the variational inequality \eqref{stb1}, we get
\begin{eqnarray}\label{stb14}
-\int_0^T\int_\Omega(\bar F(x)-F(x))G(x,t)\lambda(x,t) ~dx dt + \gamma \int_\Omega(F(x)-F_0(x))(\bar F(x) - F(x)) ~dx \geq 0.   
\end{eqnarray}
Now, again changing $k$ by $F$ and $F^*$ by $\bar F$ in \eqref{stb1}, we have 
\begin{eqnarray}\label{stb15}
 -\int_0^T\int_\Omega(F(x)-\bar F(x))G(x,t)\bar{\lambda}(x,t) ~dx dt + \gamma \int_\Omega(\bar F(x)-F_0(x))(F(x) - \bar F(x)) ~dx \geq 0.   
\end{eqnarray}
By adding \eqref{stb14} and \eqref{stb15}, one can get 
\begin{equation*} \label{min}
\gamma \int_\Omega(F(x)-\bar F(x))^2 dx \leq \int_0^T\int_\Omega(F(x)-\bar F(x))G(x,t)(\lambda-\bar\lambda)(x,t) dx dt.    
\end{equation*}
Now, applying H\"older's inequality and using the inequality \eqref{de1}, we have 
\begin{eqnarray}\label{stb77}
 \gamma \int_\Omega(F(x)-\bar F (x))^2~dx &\leq & \|(F-\bar F)G\|_{L^2(0,T;L^2(\Omega))}\|\lambda-\bar\lambda\|_{L^2(0,T;L^2(\Omega))} \nonumber \\
  &\leq&  \parallel F - \bar F\parallel_{L^2(\Omega)} \parallel G \parallel_{L^2(0,T; L^\infty(\Omega)}\parallel \Lambda \parallel_{L^2(0,T;L^2(\Omega))}, 
\end{eqnarray}
where $\Lambda = \lambda - \bar{\lambda}.$
The estimate \eqref{stb10} leads to the following: 
\begin{eqnarray}\label{stb41}
\parallel \Lambda \parallel^2_{L^2(0,T;L^2(\Omega))} &\leq &\frac{2T}{a_{\min}}\left(1+ \frac{T}{a_\min}e^{\frac{T}{a_\min}}\right)\left[\parallel \delta_{\rm obs}z_{\delta}(x)\parallel^2_{L^\infty(\Omega)}\parallel U \parallel^2_{L^2(0,T;L^2(\Omega))} \right.\\
 && \left.+\parallel (\tilde u(x,t)-\tilde{\bar u}(x,t)) \delta_{\rm obs} z_\delta(x)\parallel^2_{L^2(0,T; L^2(\Omega))}\right], \nonumber
\end{eqnarray}
where $U = u- \bar{u}.$ Now from estimate \eqref{stb8}, we have 
\begin{eqnarray}\label{stb51}
\parallel U \parallel^2_{L^2(0,T;L^2(\Omega))} \leq \frac{T C_{1}(a,T)}{a_\min}\parallel \hat{F} \parallel^2_{L^2(\Omega)} \parallel G \parallel^2_{L^2(0,T; L^\infty(\Omega))},  
\end{eqnarray}
where $\hat F = F- \bar{F}.$
Now using  \eqref{stb51} in  \eqref{stb41}, we get 
\begin{eqnarray}\label{stb78}
\parallel \Lambda \parallel^2_{L^2(0,T;L^2(\Omega))} &\leq& \frac{2T^2 (C_1(a,T))^2}{a^2_\min} \parallel \hat{F} \parallel^2_{L^2(\Omega)} \parallel G \parallel^2_{L^2(0,T; L^\infty(\Omega))}\parallel \delta_{\rm obs}z_{\delta}(x)\parallel^2_{L^\infty(\Omega)}  \nonumber \\
&& + \frac{2T C_1(a,T)}{a_\min}\parallel (\tilde u(x,t)-\tilde{\bar u}(x,t)) \delta_{\rm obs} z_\delta(x)\parallel^2_{L^2(0,T; L^2(\Omega))}.      
\end{eqnarray}
Using the estimate \eqref{stb78} in the estimate \eqref{stb77}, one can get
\begin{eqnarray*}
\gamma^2 \parallel F-\bar F \parallel^2_{L^2(\Omega)} &\leq& \frac{2T^2\left(C_1(a,T)\right)^2}{a^2_\min}\parallel F-\bar F \parallel^2_{L^2(\Omega)}  \parallel G \parallel^4_{L^2(0,T;L^\infty(\Omega))}\|\delta_{\rm obs} z_\delta(x) \|^2_{L^\infty(\Omega)} \\
 & +& \frac{2TC_1(a,T)}{a_\min} \parallel (\tilde u(x,t)-\tilde{\bar u}(x,t)) \delta_{\rm obs} z_\delta(x)\parallel^2_{L^2(0,T; L^2(\Omega))}\parallel G \parallel^2_{L^2(0,T;L^\infty(\Omega))}.
\end{eqnarray*}
\noindent Now let, \begin{align*}
L(T) := \frac{T C_1(a,T)}{a_\min}, \,\,K(T) := 2(L(T))^2 \parallel G \parallel^4_{L^2(0,T; L^\infty(\Omega))}\| \delta_{\rm obs} z_\delta(x) \|^2_{L^\infty(\Omega)}. 
 \end{align*}
\noindent Choosing $T_0$ such that $\frac{K(T_0)}{\gamma^2}\leq \frac{1}{2}$, we get
 \begin{equation*}\label{stb111}
 \begin{split}
 &\parallel F-\bar F \parallel_{L^2(\Omega)} \leq \frac{2(L(T_0))^{\frac{1}{2}}}{\gamma}\parallel G \parallel_{L^2(0,T; L^\infty(\Omega))}   \parallel (\tilde u(x,t)-\tilde{\bar u}(x,t)) \delta_{\rm obs} z_\delta(x) \parallel_{L^2(0,T; L^2(\Omega))}. 
  \end{split}
 \end{equation*} 
 This completes the proof. \qed
\end{proof}
\begin{corollary}
Suppose the conditions of Theorem \ref{stabt} hold. If the given measurements are unique, that is, $\bar u(x,t) = \tilde{\bar u}(x,t)$ for a.e. $(x,t)\in \partial_1\Omega\times (0,T)$, then there exists a time instant $T_0$ such that for $T\geq T_0$, we have $F(x) = \bar F(x)$ for a.e. $x\in \Omega.$ 
\end{corollary}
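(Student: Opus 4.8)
The plan is to read the conclusion off directly from the local stability estimate of Theorem~\ref{stabt}, specialized to coinciding data. First I would apply Theorem~\ref{stabt} to the two minimizers $F$ and $\bar F$ associated with the data $\bar u$ and $\tilde{\bar u}$, which furnishes the time $T_0>0$ (the threshold for which $K(T_0)/\gamma^2\le \tfrac12$ in the proof of that theorem) and the constant $L(T_0)>0$ so that the estimate~\eqref{stb110},
\[
\|F-\bar F\|_{L^2(\Omega)}\le \frac{2\,(L(T_0))^{1/2}}{\gamma}\,\|G\|_{L^2(0,T;L^\infty(\Omega))}\,\big\|(\bar u-\tilde{\bar u})\,\delta_{\rm obs}\,z_\delta\big\|_{L^2(0,T;L^2(\Omega))},
\]
is at our disposal for $T\ge T_0$.

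Next I would insert the hypothesis $\bar u(x,t)=\tilde{\bar u}(x,t)$ for a.e.\ $(x,t)\in\partial_1\Omega\times(0,T)$. Since the observation points on which $\delta_{\rm obs}$ is concentrated lie on $\partial_1\Omega$, the weighted mismatch $(\bar u-\tilde{\bar u})\,\delta_{\rm obs}\,z_\delta$ vanishes, so the last factor on the right-hand side equals $0$. With $\gamma>0$ fixed and the remaining factors finite (by Assumption~\ref{assm} together with Theorems~\ref{th1} and~\ref{th2}), the right-hand side is $0$; hence $\|F-\bar F\|_{L^2(\Omega)}=0$, i.e.\ $F(x)=\bar F(x)$ for a.e.\ $x\in\Omega$, which is the claim.

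I do not anticipate a genuine obstacle: the corollary is exactly the zero data-mismatch case of the already-established Theorem~\ref{stabt}. The one point that deserves a sentence of justification is the vanishing of $\big\|(\bar u-\tilde{\bar u})\,\delta_{\rm obs}\,z_\delta\big\|_{L^2(0,T;L^2(\Omega))}$ --- namely that equality of the two measurements on $\partial_1\Omega\times(0,T)$ kills the observation term in the Tikhonov functional~\eqref{L1}, which follows immediately from the way that term is defined in terms of $\delta_{\rm obs}$ and the observation points. As a remark, one could also observe that when the data coincide $F$ and $\bar F$ both minimize the same functional $J_\gamma$, so the uniqueness of the minimizer proved earlier already gives $F=\bar F$; casting the statement as a corollary of Theorem~\ref{stabt} merely keeps the argument quantitative and in the spirit of the stability analysis.
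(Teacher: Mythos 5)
Your proposal is correct and follows the paper's own argument: the corollary is read off directly from the estimate \eqref{stb110} of Theorem \ref{stabt}, whose right-hand side vanishes when the two measurements coincide on $\partial_1\Omega\times(0,T)$, giving $\|F-\bar F\|_{L^2(\Omega)}=0$. Your closing remark about both source functions minimizing the same functional is a reasonable alternative observation, but the main line of reasoning matches the paper.
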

\begin{proof}
The estimate in equation \eqref{stb110} clearly shows that when $\tilde u(x,t) =\tilde{\bar u}(x,t)$ for a.e. $(x,t)\in \partial_1\Omega\times (0,T)$, it follows that $F(x) = \bar F(x)$ for a.e. $x\in \Omega$. \qed
\end{proof}
 \begin{Remark}  From Theorem \ref{stabt} above, we can observe that how the regularization parameter $\gamma$, the minimum value $a_\min$, and the final time $T$ directly affect the stability constant. This suggests that if $a_\min$ or $\gamma$ is sufficiently large, or if the final time $T$ is too small, the stability constant becomes a very small. Further, the smallness condition on $T$ can be modified by choosing $\gamma$ sufficiently large, that is, $\sqrt{2K(T)} \leq \gamma.$
 \end{Remark}
 \section{Numerical Reconstruction of Source Term}
\subsection{Finite Difference Discretization}
\label{sec:fdmopt}
For computations, we discretize $\Omega_T$ in space and time.
Let $J_{\tau}$ denotes discretization of the time interval $[0,T]$ into time
 sub-intervals $J=(t_{k-1},t_k]$ of  the  length
   $\tau=T/N$. Here,  $N$ is the number of time intervals.\par
To discretize the space $\Omega$
 we denote by $K_h = \{ K\}$ a partition of the domain $\Omega$
 into cubes in $ \mathbb R^3 $ or squares in $ \mathbb R^2$.
We choose the mesh sizes $\Delta x=\frac{1}{N_x}, \Delta y = \frac{1}{N_y}, \Delta z = \frac{1}{N_z}$  where $N_x,N_y,N_z$ are the number of mesh points in $x,y,z$ directions, respectively.\par
For the numerical solutions to the forward and adjoint problems, we use
 the Cranck-Nicolson
 method, which is an efficient method for the solution of a parabolic PDE. This scheme for the forward problem
\eqref{prb1} leads to the following discrete form
for 2D case:
\begin{equation}\label{dis1}
\begin{cases}
&a_{i,j}\frac{u^{n+1}_{i,j} - u^n_{i,j}}{\Delta t} = \frac{1}{2}\left(\frac{u^{n+1}_{i-1,j} - 2u^{n+1}_{i,j} + u^{n+1}_{i+1,j}}{\Delta x^2} + \frac{u^{n+1}_{i,j-1} - 2u^{n+1}_{i,j} + u^{n+1}_{i,j+1}}{\Delta y^2}\right) \\
&+ \frac{1}{2}\left(\frac{u^n_{i-1,j} - 2u^n_{i,j} + u^n_{i+1,j}}{\Delta x^2} + \frac{u^n_{i,j-1} - 2u^n_{i,j} + u^n_{i,j+1}}{\Delta y^2} \right) + \frac{1}{2} F_{i,j} G^{n+1}_{i,j} + \frac{1}{2} F_{i,j} G^n_{i,j}.
\end{cases}
\end{equation}
Further,  rearranged form of \eqref{dis1} for the 2D case is as follows:
\begin{equation*}\label{dis2}
\begin{cases}
&a_{i,j}u^{n+1}_{i,j} - \frac{1}{2}\left[ \frac{\Delta t}{\Delta x^2}(u^{n+1}_{i+1,j} - 2u^{n+1}_{i,j} + u^{n+1}_{i-1,j}) + \frac{\Delta t}{\Delta y^2}(u^{n+1}_{i,j+1} - 2u^{n+1}_{i,j} + u^{n+1}_{i,j-1})\right] \\
&= a_{i,j}u^n_{i,j} + \frac{1}{2}\left[ \frac{\Delta t}{\Delta x^2}(u^n_{i+1,j} - 2u^n_{i,j} + u^n_{i-1,j}) + \frac{\Delta t}{\Delta y^2}(u^n_{i,j+1} - 2u^n_{i,j} + u^n_{i,j-1})\right]  \\
& \qquad + \frac{\Delta t}{2} F_{i,j}G_{i,j}^{n+1}  + \frac{\Delta t}{2} F_{i,j}G_{i,j}^n,
\end{cases}
\end{equation*}
and in the case of 3D, we have
\begin{equation*}\label{dis22}
\begin{cases}
&a_{i,j,k}u^{n+1}_{i,j,k} - \frac{1}{2}  \left[ \frac{\Delta t}{\Delta x^2}(u^{n+1}_{i+1,j,k} - 2u^{n+1}_{i,j,k} + u^{n+1}_{i-1,j,k}) + \frac{\Delta t}{\Delta y^2}(u^{n+1}_{i,j+1,k} - 2u^{n+1}_{i,j,k} + u^{n+1}_{i,j-1,k}) \right. \\
 & \qquad \qquad  \left.+ \frac{\Delta t}{\Delta z^2}(u^{n+1}_{i,j,k+1} - 2u^{n+1}_{i,j,k} + u^{n+1}_{i,j,k-1})\right] \\
&= a_{i,j,k}u^n_{i,j,k} + \frac{1}{2}   \left[ \frac{\Delta t}{\Delta x^2}(u^n_{i+1,j,k} - 2u^n_{i,j,k} + u^n_{i-1,j,k}) + \frac{\Delta t}{\Delta y^2}(u^n_{i,j+1,k} - 2u^n_{i,j,k} + u^n_{i,j-1,k})\right. \\
& \qquad \qquad \left. + \frac{\Delta t}{\Delta z^2}(u^n_{i,j,k+1} - 2u^n_{i,j,k} + u^n_{i,j,k-1})\right]  + \frac{\Delta t}{2} F_{i,j,k}G_{i,j,k}^{n+1}  + \frac{\Delta t}{2} F_{i,j,k}G_{i,j,k}^n,
\end{cases}
\end{equation*}
where $u^n_{i,j} := u(x_i,y_j,t_n)$, $u^n_{i,j,k} := u(x_i,y_j,z_k, t_n),$  in which $x_i = i\Delta x$ for $i = 0,1,2,··· ,N_x $, $y_j = j\Delta y$ for $j = 0,1,2,··· ,N_y $, $z_k = k\Delta z$ for $k = 0,1,2,··· ,N_z $.
We observe that the scheme \eqref{dis1} can be written in the form of system of linear equations
$A u^{n+1} = f(u^n)$ and solved for $u^{n+1}$ by known $f(u^n)$.\par
To discretize the first-order boundary condition of direct problem \eqref{prb1}, we use the central difference scheme for the discretization. This allows to obtain a numerical approximation of higher order than ordinary(backward or forward) finite difference approximation.
Similarly, FD scheme can be derived   for the solution of the adjoint problem \eqref{adj11}. In the 2D case, the FD scheme  for discretization of \eqref{adj11} can be expressed as follows:
\begin{equation*}\label{adj1}
\begin{cases}
&-a_{i,j}\frac{(\lambda^{n+1}_{i,j} - \lambda^n_{i,j})}{\Delta t} = \frac{1}{2}\left(\frac{\lambda^{n+1}_{i+1,j} - 2\lambda^{n+1}_{i,j} + \lambda^{n+1}_{i-1,j}}{\Delta x^2} + \frac{\lambda^{n+1}_{i,j+1} - 2\lambda^{n+1}_{i,j} + \lambda^{n+1}_{i,j-1}}{\Delta y^2}\right) \\
&+ \frac{1}{2}\left(\frac{\lambda^n_{i+1,j} - 2\lambda^n_{i,j} + \lambda^n_{i-1,j}}{\Delta x^2} + \frac{\lambda^n_{i,j+1} - 2\lambda^n_{i,j} + \lambda^n_{i,j-1}}{\Delta y^2} \right) \\
&- \frac{1}{2} (u^{n+1}_{i,Ny}- \tilde u^{n+1}_{i,Ny}) \delta_{\rm obs} z_\delta(x_{i},y_{j})  
- \frac{1}{2} (u^n_{i,Ny}- \tilde u^n_{i,Ny}) \delta_{\rm obs} z_\delta(x_{i},y_{j}),
\end{cases}
\end{equation*}
which can be rewritten in the form
\begin{equation}\label{adj22}
\begin{cases}
&a_{i,j}\lambda^n_{i,j} - \frac{1}{2}\left[ \frac{\Delta t}{\Delta x^2}(\lambda^n_{i+1,j} - 2\lambda^n_{i,j} + \lambda^n_{i-1,j}) + \frac{\Delta t}{\Delta y^2}(\lambda^n_{i,j+1} - 2\lambda^n_{i,j} + \lambda^n_{i,j-1})\right] \\
&= a_{i,j}\lambda^{n+1}_{i,j} + \frac{1}{2}\left[ \frac{\Delta t}{\Delta x^2}(\lambda^{n+1}_{i+1,j} - 2\lambda^{n+1}_{i,j} + \lambda^{n+1}_{i-1,j}) + \frac{\Delta t}{\Delta y^2}(\lambda^{n+1}_{i,j+1} - 2\lambda^{n+1}_{i,j} + \lambda^n_{i,j-1})\right]  \\
& \qquad- \frac{\Delta t}{2} (u^{n+1}_{i,Ny}- \tilde u^{n+1}_{i,Ny})   \delta_{\rm obs}  z_\delta(x_{i},y_{j})
- \frac{\Delta t}{2} (u^n_{i,Ny}- \tilde u^n_{i,Ny}) \delta_{\rm obs}   z_\delta(x_{i},y_{j}),
\end{cases}
\end{equation}
where the smoothing function $z_\delta(x_{i},y_{j})$ is  centered at $(x_{i},y_{j})$ on $\partial_1\Omega$.
Now rearranged form for the 3D case is given as follows:
\begin{eqnarray}\label{adj2}
\begin{cases}
&a_{i,j,k}\lambda^n_{i,j,k} - \frac{1}{2}\left[ \frac{\Delta t}{\Delta x^2}(\lambda^n_{i+1,j,k} - 2\lambda^n_{i,j,k} + \lambda^n_{i-1,j,k}) + \frac{\Delta t}{\Delta y^2}(\lambda^n_{i,j+1,k} - 2\lambda^n_{i,j,k} + \lambda^n_{i,j-1,k})  \right.\\
& \left. \qquad \qquad \qquad+  \frac{\Delta t}{\Delta z^2}(\lambda^n_{i,j,k+1} - 2\lambda^n_{i,j,k} + \lambda^n_{i,j,k-1})\right] \\
&= a_{i,j,k}\lambda^{n+1}_{i,j,k} + \frac{1}{2}\left[ \frac{\Delta t}{\Delta x^2}(\lambda^{n+1}_{i+1,j,k} - 2\lambda^{n+1}_{i,j,k} + \lambda^{n+1}_{i-1,j,k}) + \frac{\Delta t}{\Delta y^2}(\lambda^{n+1}_{i,j+1,k} - 2\lambda^{n+1}_{i,j,k} + \lambda^{n+1}_{i,j-1,k})  \right. \\
& \left. \qquad \qquad \qquad \qquad  +  \frac{\Delta t}{\Delta z^2}(\lambda^{n+1}_{i,j,k+1} - 2\lambda^{n+1}_{i,j,k} + \lambda^{n+1}_{i,j,k-1})\right]  \\
& - \frac{\Delta t}{2} (u^{n+1}_{i,j,Nz}- \tilde u^{n+1}_{i,j,Nz})   \delta_{\rm obs}  z_\delta(x_{i},y_{j},z_{k})
- \frac{\Delta t}{2} (u^n_{i,j,Nz}- \tilde u^n_{i,j,Nz}) \delta_{\rm obs}   z_\delta(x_{i},y_{j},z_{k}),
\end{cases}
\end{eqnarray}
where the function $z_\delta(x_{i},y_{j}, z_{k})$ is centered at $(x_{i},y_{j},z_{k})$ on $\partial_1\Omega$.
The scheme \eqref{adj22} and \eqref{adj2}  can be written as a system of linear equations
$B \lambda^n = g(\lambda^{n+1}) $ and solved backward in time for $\lambda^n$ using known values of $g(\lambda^{n+1})$.
The analysis of the proposed FD scheme is studied in several works(see, for example,\cite{strikwerda2004finite}).
\subsection{Optimization Algorithms}
\label{sec:algo}
In this section, we will formulate an optimization algorithm based on Conjugate Gradient Algorithm(CGA) for the computation of the optimal
 source function $F(x)$.\par
By using \eqref{gra}, we define the gradient at iteration $m$ in the optimization  algorithm (CGA)
 as
\begin{equation}\label{Bhm}
\begin{split}
  g_h^m(x) = -\int_0^T  G(x,t) \lambda_h^m  d\tau  + \gamma^m  (F_h^m - F_0)(x),
\end{split}
\end{equation}
where $F_h^m$ is the computed source function at each iteration
$m$ of the algorithm, $u_h(x,t, F_h^m)$ and $\lambda_h( x,t,F_h^m)
$ are computed solutions of the state problem (\ref{prb1}) and the
adjoint problem (\ref{adj11}), respectively, with $F:= F_h^m$,
on the finite difference mesh $K_h$. The regularization parameter
$\gamma^m$ can be computed via iterative rules of \cite{BKS} as
\begin{equation}\label{iterative}
  \gamma^m = \frac{\gamma^0}{(m+1)^p}, ~~p \in (0,1),
  \end{equation}
where $\gamma^0$ is an initial guess for the regularization parameter. 

Let us formulate the
conjugate gradient algorithm (CGA) for computing the optimal solution of the
functional (\ref{L1}), or
the source function $F$. The iterations in CGA are performed via the following iterative rule:
\begin{equation}\label{cgm}
F_h^{m+1}(x) =  F_h^m(x)  - \alpha^m d_h^m(x),
  \end{equation}
where $\alpha^m$ are iteratively updated step sizes in the gradient update and $d_h^m$ is the direction of descent, which is computed for the conjugate gradient method at iteration $m$ as
\begin{equation}\label{dm}
d_h^m = g_h^m + \beta^m d_h^{m-1},
  \end{equation}
where $g_h^m$ is the gradient given in \eqref{Bhm}. Here, the conjugate coefficient $\beta^m$ is given by the Fletcher–Reeves method(see,\cite{fletcher1964function,daniel1971approximate}) is computed as
\begin{equation*}\label{beta}
\begin{split}
 \beta^m &= \frac{\| g_h^m(x)\|^2}{\| g_h^{m-1}(x)\|^2}.
\end{split}
\end{equation*}
The step-size $\alpha^m$   in the CGA update  \eqref{cgm}  is computed such that
it minimizes the Tikhnov functional  $J_{\gamma}(u_h^m, F_h^m - \alpha^m d^m)$.
The next lemma   provides a formula for the computation of $\alpha^m$
 in CGA update \eqref{cgm}.
\begin{Lemma}\label{step1}
Suppose the iterations in the  CGA are given by \eqref{cgm}.
Then step size  $\alpha^m$  at iteration $m$ of the conjugate gradient update \eqref{cgm}
 can be computed as
\begin{eqnarray}\label{alphacgm1}
\begin{split}
&\alpha^m = \frac{\int_\Omega\int_0^T( u(x,t;F^m_h) -  \widetilde u(x,t))   \delta u(x,t;F^m_h) \delta_{\rm obs} z_\delta (x) ~dxdt + \gamma^m \int_\Omega(F^m_h - F_0)(x) d^m_h(x)~~ dx}{\gamma^m \int_\Omega(d^m_h(x))^2~~dx + \int_\Omega\int_0^T(\delta u(x,t;F^m_h))^2 \delta_{\rm obs} z_\delta (x)~~dxdt},
\end{split}
\end{eqnarray}
where $\delta u$ is the solution of the sensitivity equation \eqref{grad1}.
\end{Lemma}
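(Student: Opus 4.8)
\medskip
\noindent\textbf{Proof proposal.}
The plan is to exploit the linearity of the source-to-solution map to convert the line search that defines $\alpha^m$ into an elementary one-dimensional quadratic minimization, and then to read off the minimizer. First I would fix the current iterate $F_h^m$ and the descent direction $d_h^m$ from \eqref{dm}, and note that since the forward problem \eqref{prb1} is linear in the source, for every $\beta\in\mathbb R$
\begin{equation*}
u(x,t;\,F_h^m - \beta\, d_h^m) = u(x,t;F_h^m) - \beta\,\delta u(x,t),
\end{equation*}
where $\delta u$ is the unique weak solution of the sensitivity problem \eqref{grad1} with $\delta F := d_h^m$; this is obtained by subtracting the two parabolic initial-boundary value problems solved by the two sides and invoking the uniqueness part of Theorem \ref{th1}.

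Next I would substitute this identity, together with $F := F_h^m - \beta\, d_h^m$, into the Tikhonov functional \eqref{L1} and expand the squares, so that
\begin{equation*}
\varphi(\beta) := J_\gamma\big(u(\cdot,\cdot;F_h^m-\beta d_h^m),\, F_h^m - \beta d_h^m\big)
\end{equation*}
becomes a genuine quadratic polynomial $\varphi(\beta) = a_2\beta^2 - 2a_1\beta + a_0$ with
\begin{align*}
a_2 &= \tfrac12\!\int_\Omega\!\!\int_0^T (\delta u)^2\, \delta_{\rm obs} z_\delta\, dx\,dt + \tfrac{\gamma^m}{2}\!\int_\Omega (d_h^m)^2\, dx, \\
a_1 &= \tfrac12\!\int_\Omega\!\!\int_0^T \big(u(x,t;F_h^m) - \widetilde u\big)\, \delta u\, \delta_{\rm obs} z_\delta\, dx\,dt + \tfrac{\gamma^m}{2}\!\int_\Omega (F_h^m - F_0)\, d_h^m\, dx.
\end{align*}
All the integrals above are finite: the energy estimate \eqref{d1}, applied to $\delta u$ with $\delta F = d_h^m \in L^2(\Omega)$, together with $z_\delta\in C^\infty(\Omega)$ and $F_h^m, F_0\in L^2(\Omega)$, bounds each term, which also makes the differentiation of $\varphi$ legitimate (in fact trivial, since $\varphi$ is a polynomial).

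Finally I would minimize $\varphi$ over $\beta\in\mathbb R$. Because $\gamma^m>0$ and $\int_\Omega (d_h^m)^2\,dx>0$ (if $d_h^m\equiv 0$ then the iteration has already reached a stationary point of $J_\gamma$ and there is nothing to compute), the leading coefficient $a_2$ is strictly positive, so $\varphi$ is a strictly convex quadratic whose unique global minimizer is the stationary point $\beta^\ast = a_1/a_2$ of $\varphi'(\beta) = 2a_2\beta - 2a_1$. Setting $\alpha^m := \beta^\ast = a_1/a_2$ and writing $a_1$ and $a_2$ out in full gives precisely formula \eqref{alphacgm1}. I do not expect a serious obstacle here: the only points that require care are the superposition identity for $u(\cdot,\cdot;F_h^m-\beta d_h^m)$, which must be justified through the well-posedness of \eqref{prb1} rather than merely asserted, and the verification that $a_2>0$, which is exactly what guarantees that the computed $\alpha^m$ minimizes — rather than merely renders stationary — the Tikhonov functional along the search direction.
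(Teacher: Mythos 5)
Your proposal is correct and follows essentially the same route as the paper: an exact line search for $J_\gamma$ along the direction $d_h^m$, substitution of $u(x,t;F_h^m-\alpha^m d_h^m)=u(x,t;F_h^m)-\alpha^m\delta u$ with $\delta u$ solving \eqref{grad1} for $\delta F=d_h^m$, and solving the resulting stationarity equation for $\alpha^m$. The only difference is cosmetic but welcome: where the paper invokes a Taylor linearization (written as an approximation), you observe that linearity of \eqref{prb1} in the source makes the superposition identity exact, and you additionally verify $a_2>0$ so the stationary point is indeed the minimizer.
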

\begin{proof}
The search step size $\alpha^m$ is computed by the exact line search method, which is given by the minimization of Tikhonov functional, i.e.,
\begin{equation}\label{min1}
 \frac{\partial}{\partial \alpha^m}J_\gamma(u,F^m_h - \alpha^md^m_h) = 0,  
\end{equation}
where, 
\begin{equation}\label{para1}
\begin{split}
 J_\gamma(u,F^m_h - \alpha^md^m_h) &=  \frac{1}{2} \int_\Omega\int_0^T( u(x,t;F^m_h - \alpha^md^m_h) -  \widetilde u(x,t))^2  \delta_{\rm obs} z_\delta (x) ~dxdt  \\
&  \ \ \ \ +\frac{1}{2} \gamma^m \int_\Omega((F^m_h - \alpha^md^m_h) -  F_0)^2(x)~~ dx.   
\end{split}
\end{equation}
Setting $\partial F^m_h = d^m_h$, and linearize the term $u(x,t;F^m_h - \alpha^m d^m_h)$ by Taylor's series expansion, we have 
\begin{equation}\label{min2}
 u(x,t;F^m_h - \alpha^m d^m_h) \approx   u(x,t;F^m_h) - \alpha^m \delta u (x,t;F^m_h).  
\end{equation}
Using \eqref{min2} in \eqref{para1}, we obtain  from \eqref{min1} that 
\begin{align*}
&\int_\Omega\int_0^T( u(x,t;F^m_h) -  \widetilde u(x,t))  \delta u(x,t;F^m_h) \delta_{\rm obs} z_\delta(x)  ~dxdt + \gamma^m \int_\Omega(F^m_h - F_0)(x) d^m_h(x)~~ dx \\
& - \alpha^m\int_\Omega\int_0^T(\delta u(x,t;F^m_h))^2 \delta_{\rm obs} z_\delta (x)~~dxdt  - \alpha^m\gamma^m \int_\Omega(d^m_h(x))^2~~dx=0. 
\end{align*}
This leads to the definition of step size given in \eqref{alphacgm1}. \qed \\
\end{proof}
\noindent We summarize all steps of the CGA algorithm in Algorithm 1.
\begin{algorithm}[hbt!]
  \centering
  \caption{ Conjugate Gradient Algorithm (CGA)}
    \begin{algorithmic}[1]
      \STATE Initialization:
\begin{itemize}
     \item  Choose 
       the  finite difference mesh $K_h$ in $\Omega$  and discretization  $J_\tau$ of the time interval $[0,T] .$
\item Choose  the initial approximation for the source function
  $F_h^0= F^0$  at  $K_h$.
  \item Choose the initial value of the regularization parameter   $\gamma^0$.
\end{itemize}
Compute the sequence of source functions
  $F_h^m$ via the following steps:
\STATE  Compute the solutions $u_h(x,t, F_h^m) $ and
  $\lambda_h(x,t, F_h^m) $ of the state  (\ref{prb1})
  and adjoint  (\ref{adj11})
problems, respectively, on $K_h$.
\STATE  Compute regularization parameter  $\gamma^m$  as in \eqref{iterative}.
\STATE Compute the direction of descent $d_h^m$ as in \eqref{dm}.
\STATE Compute the solution $U$ of the problem \eqref{sens2} by setting $\hat F = d_h^m.$
\STATE Compute the search step size $\alpha^m$ from \eqref{alphacgm1} by setting $\delta u = U.$
\STATE   Compute new values of the source function
  $F_h:= F_h^{m+1}$   using   conjugate gradient update
\begin{equation}\label{cgm1}
\begin{split}
F_h^{m+1} &=  F_h^m  - \alpha^m d_h^m,
\end{split}
\end{equation}
where $d_h^m$ is computed via \eqref{dm}.
Here, $d^0(x)= g^0(x)$. In (\ref{cgm1}) the step size $\alpha^m$ in
the conjugate gradient update is computed via \eqref{alphacgm1}.
\STATE Stop the algorithm and obtain the function $F_h$ at the iteration
$\bar m = m$ if any one of the following  criteria is satisfied:
$\|g^m\|_{L^2(\Omega)} \leq \theta_1$, or
$\|e^m\|_{L^2(\Omega)}  \leq \theta_2 $,
 where the relative error $e^m$ is computed as
\begin{equation*}\label{An}
  e^m: =  \| F_h^m - F_h^{m-1}\|/  \| F_h^m  \|.
  \end{equation*}
 Here, $\theta_i,i=1,2$
 are the
tolerances. Otherwise set
$m:=m+1$ and go to step 2.
 \end{algorithmic}
\end{algorithm}
\subsection{Numerical Examples}
\label{sec:numex}
This section describes numerical examples of the reconstruction of the source function $F$ in ISP in 2D and 3D.
The goal of our numerical tests is to reconstruct the source function $F$ in the
domain $\Omega$,
which we set as
 \begin{equation*}
 \Omega = \left\{ x= (x_1,x_2): x_1 \in [0 , 1], x_2 \in [0, 1] \right\} \,\, \mbox{in 2D}.
 \end{equation*}
and,  \begin{equation*}
 \Omega = \left\{ x= (x_1,x_2, x_3): x_1 \in [0 , 1], x_2 \in [0, 1], x_3 \in [0,1] \right\} \,\, \mbox{in 3D}.
 \end{equation*}

To determine the source $F$  in ISP of the model problem \eqref{prb1}, we minimize the Tikhonov functional
\eqref{L1}  where  the function $\tilde{u}(x,t)$ is the measured simulated function at the observation points placed at the part of the boundary
  $\partial_1 \Omega$. Here, the function $u(x,t)$ corresponds to
the simulated solution of the model problem with the exact source function. We assume in our computations that we don't know the source function and we are working only with the measured function  $\tilde u(x,t)$.\par
For the solution of the optimization problem  $\displaystyle \min_{F\in \mathcal F} ~ J_\gamma(u,F),$ we use  CGA algorithm of section \ref{sec:algo} with an iterative choice of the regularization
parameter $\gamma$ in \eqref{L1}, which is computed by \eqref{iterative}.
The iterative update of the regularization parameter $\gamma^m$ \eqref{iterative}  was
proposed  and justified in \cite{BKS}
for the solution of the inverse problem using  
gradient-like methods. Here 
 $m$ is the number
of iterations in the  CGA, $p \in (0,1)$ and
$\gamma^0$ is an initial guess for the regularization parameter.
As an initial guess $\gamma^0$, one can take 
$\gamma^0 =\delta^\zeta$, where $\delta$ is the known noise level  in the data,  and
$\zeta$ is a small number in the interval $(0,1)$, see  the explanation
 for such a choice in \cite{Klibanov_Bakushinsky_Beilina}.
 
In order to check the performance of the reconstruction algorithm, we supply the simulated noisy
data at $\partial_1 \Omega$ by adding the normally distributed 
Gaussian noise with mean $\mu = 0$ to the simulated
data at the boundary $\partial_1 \Omega$. Then,
we smooth out this data to get reasonable
reconstructions.
\subsection{Reconstruction of the Source Function $F$ in 2D} In this section, we consider a set of 2D examples for the reconstruction of the source term with different initial guess $F_0.$  \\
\begin{figure}
\begin{center}  
\begin{tabular}{cc}
 \subfloat[] {\includegraphics[scale=0.45, clip = true, trim = 0.0cm 0.0cm 0.0cm 0.0cm ]{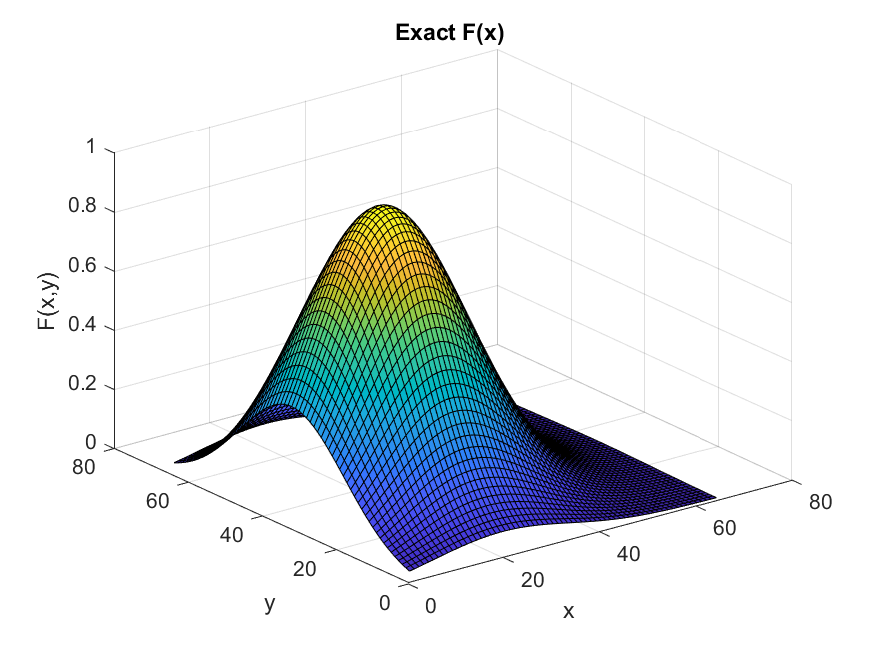}  \label{fig:Test1A}} &
 \subfloat[] {\includegraphics[scale=0.45, clip = true, trim = 0.0cm 0.0cm 0.0cm 0.0cm]{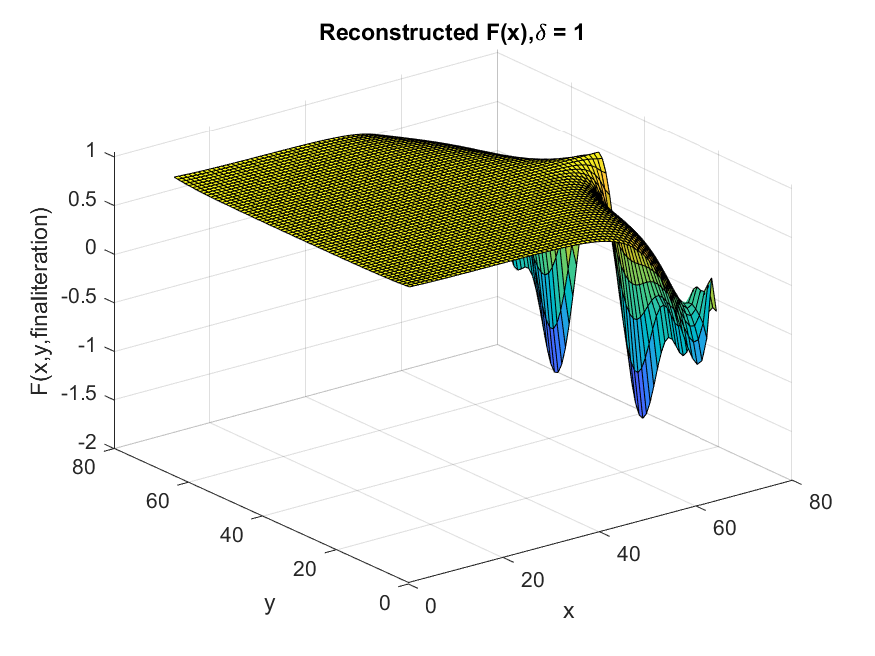} \label{fig:Test1B}}\\
 \subfloat[]{\includegraphics[scale=0.45, clip = true, trim = 0.0cm 0.0cm 0.0cm 0.0cm ]{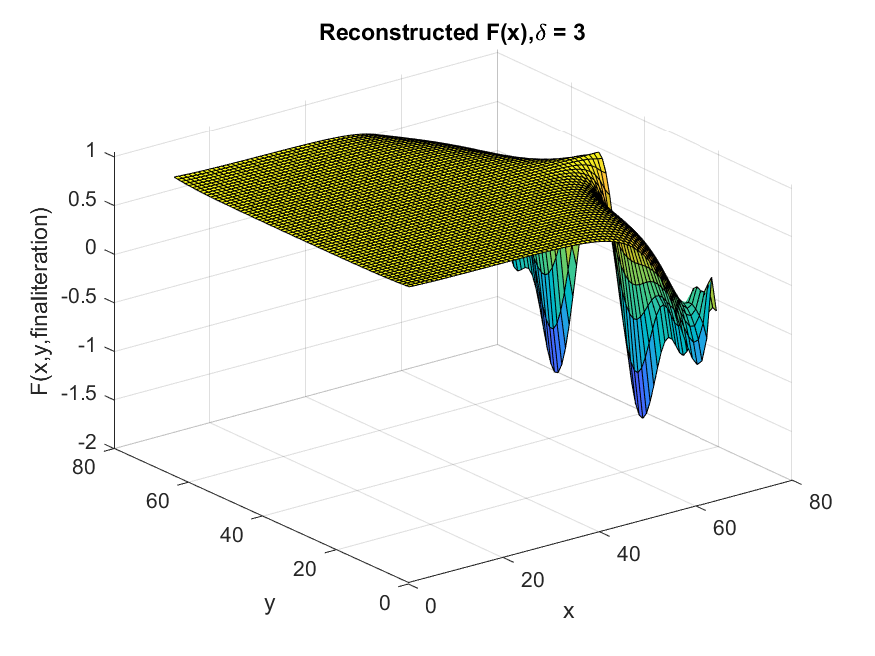}  \label{fig:Test1C}}&
 \subfloat[] {\includegraphics[scale=0.45, clip = true, trim = 0.0cm 0.0cm 0.0cm 0.0cm]{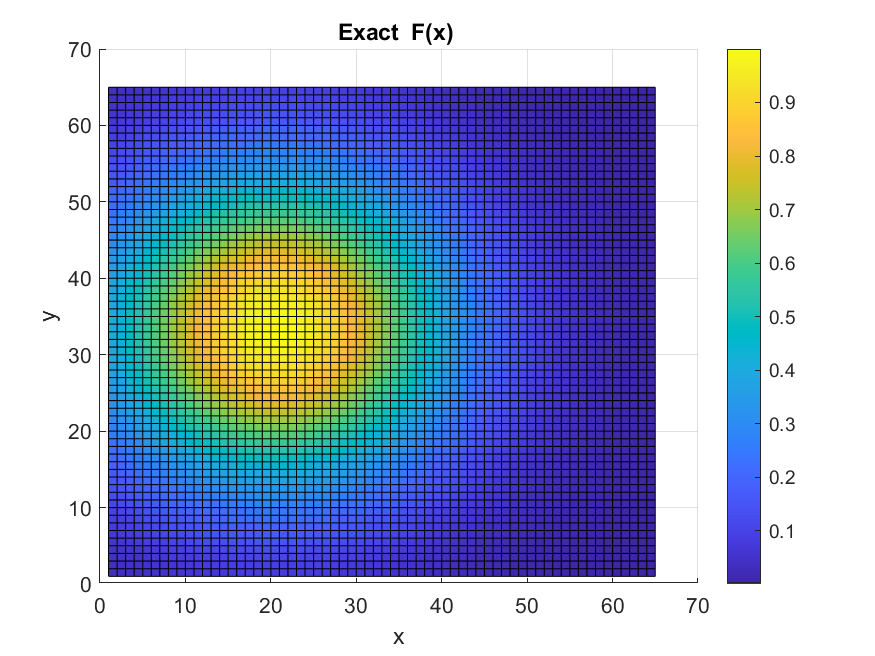} \label{fig:Test1D}}\\
  \subfloat[] {\includegraphics[scale=0.45, clip = true, trim = 0.0cm 0.0cm 0.0cm 0.0cm]{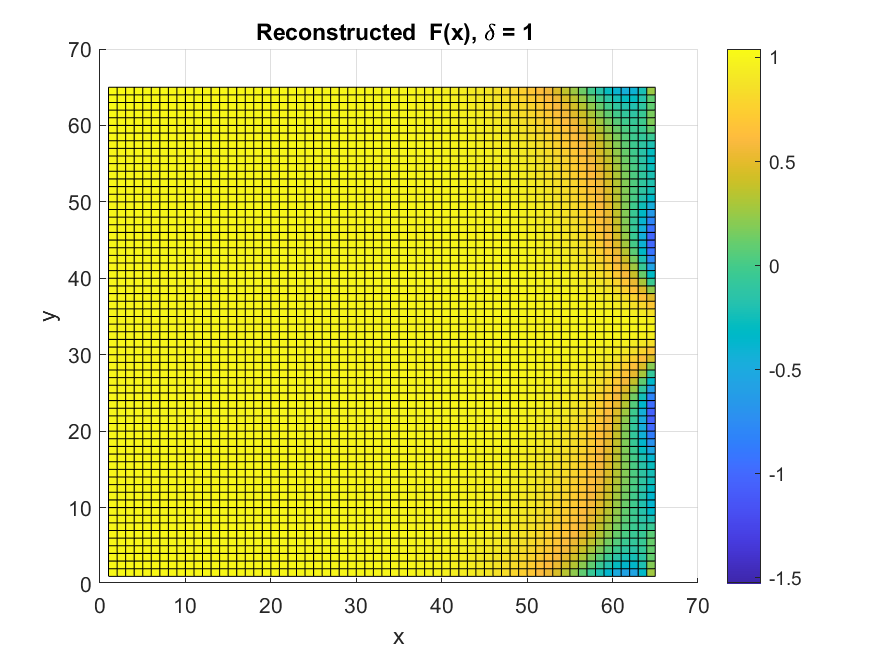} \label{fig:Test1E}}&
\subfloat[]  {\includegraphics[scale=0.45, clip = true, trim = 0.0cm 0.0cm 0.0cm 0.0cm]{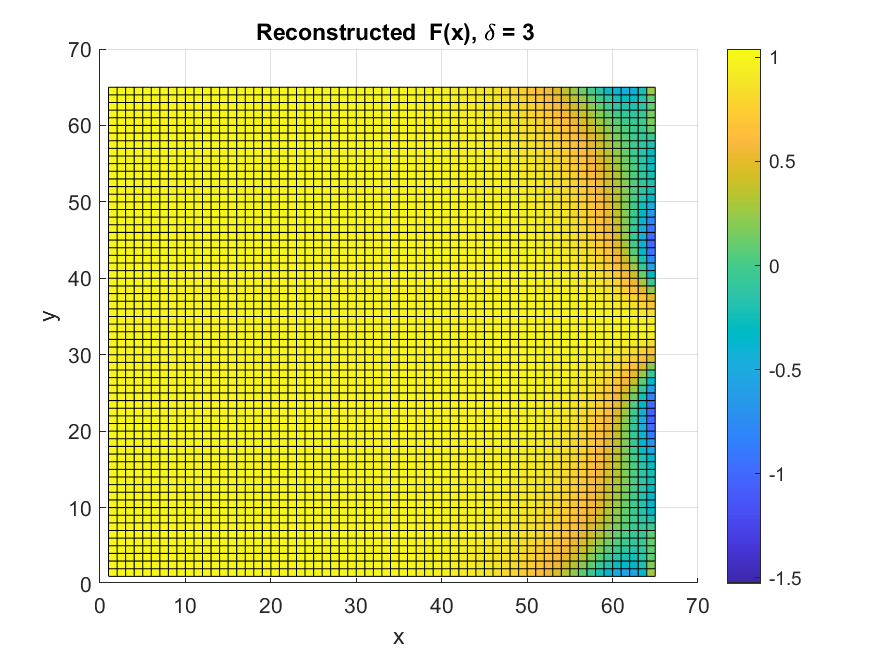} \label{fig:Test1F}}\\
\subfloat[]{\includegraphics[scale=0.45, clip = true, trim = 0.0cm 0.0cm 0.0cm 0.0cm]{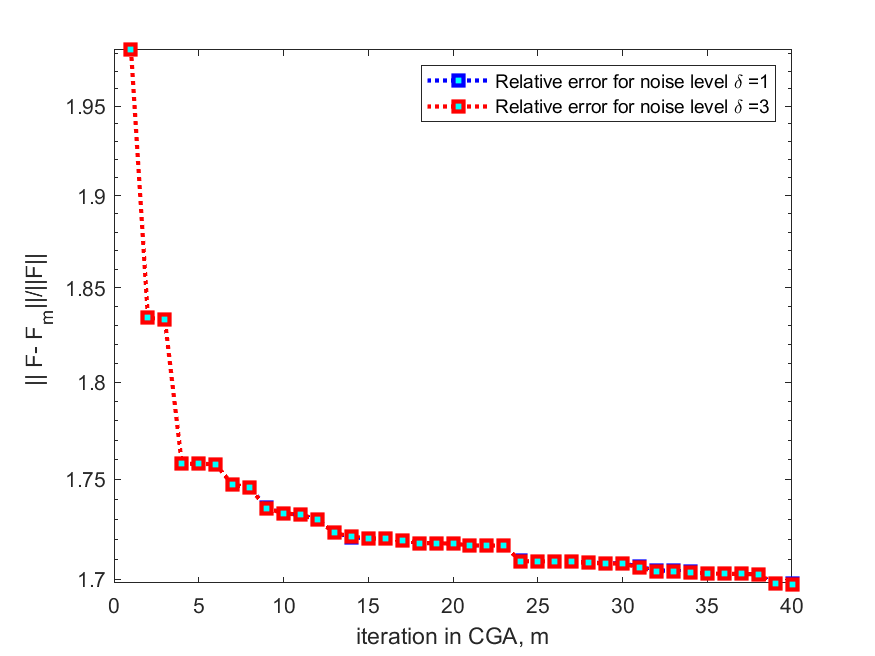} \label{fig:Test1G}} &
\subfloat[]{\includegraphics[scale=0.45, clip = true, trim = 0.0cm 0.0cm 0.0cm 0.0cm]{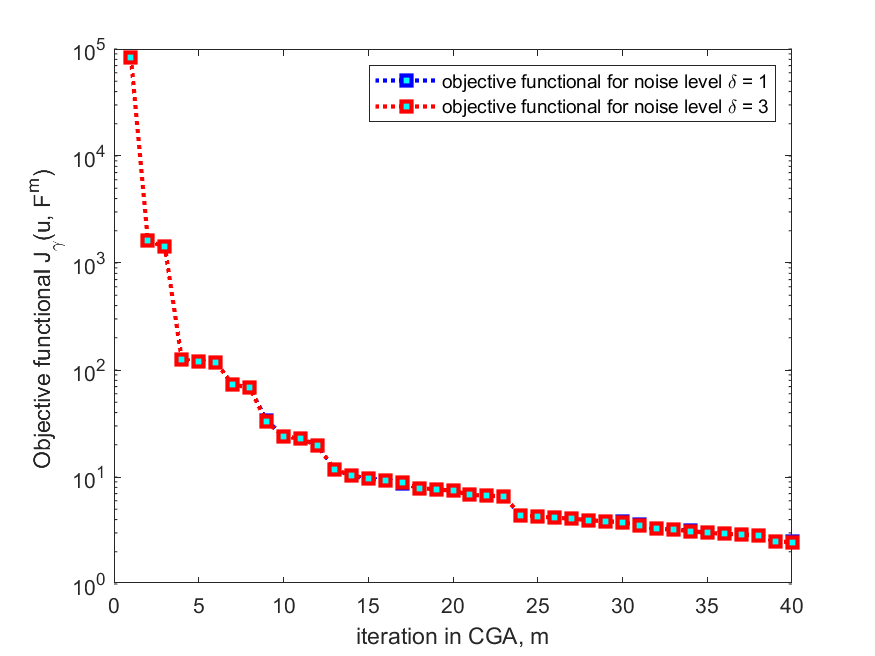} \label{fig:Test1H}} \\ 
\end{tabular}
\end{center}
\caption{Exact $F$ and reconstructed function $F^m$ at the iteration $m= 40$ of CGA. Computations are performed on the mesh with $h = 2^{-6}$.}
 \label{fig:Test1ab}
 \end{figure}
\begin{figure}
\begin{center}  
\begin{tabular}{cc}
 \subfloat[]{\includegraphics[scale=0.46, clip = true, trim = 0.0cm 0.0cm 0.0cm 0.0cm ]{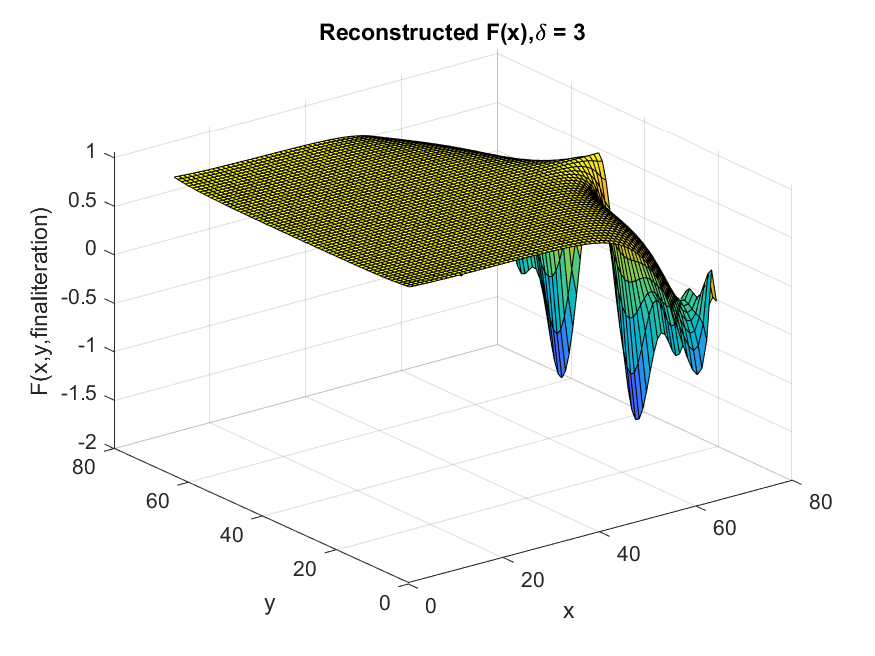} \label{fig:Test2A}} &
 \subfloat[] {\includegraphics[scale=0.46, clip = true, trim = 0.0cm 0.0cm 0.0cm 0.0cm]{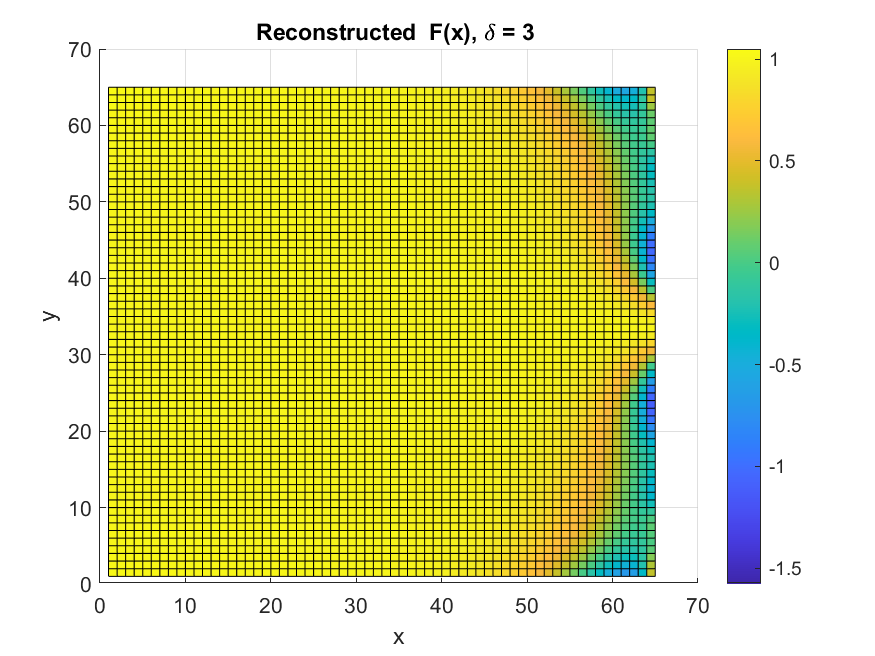}  \label{fig:Test2B}} \\
\subfloat[]{\includegraphics[scale=0.46, clip = true, trim = 0.0cm 0.0cm 0.0cm 0.0cm]{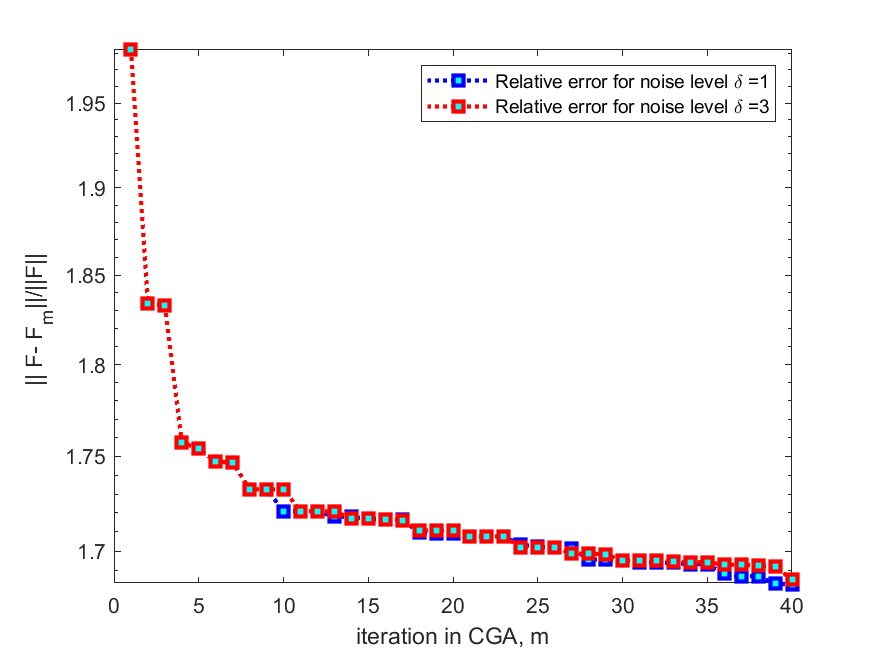} \label{fig:Test2C}} &
\subfloat[]{\includegraphics[scale=0.46, clip = true, trim = 0.0cm 0.0cm 0.0cm 0.0cm]{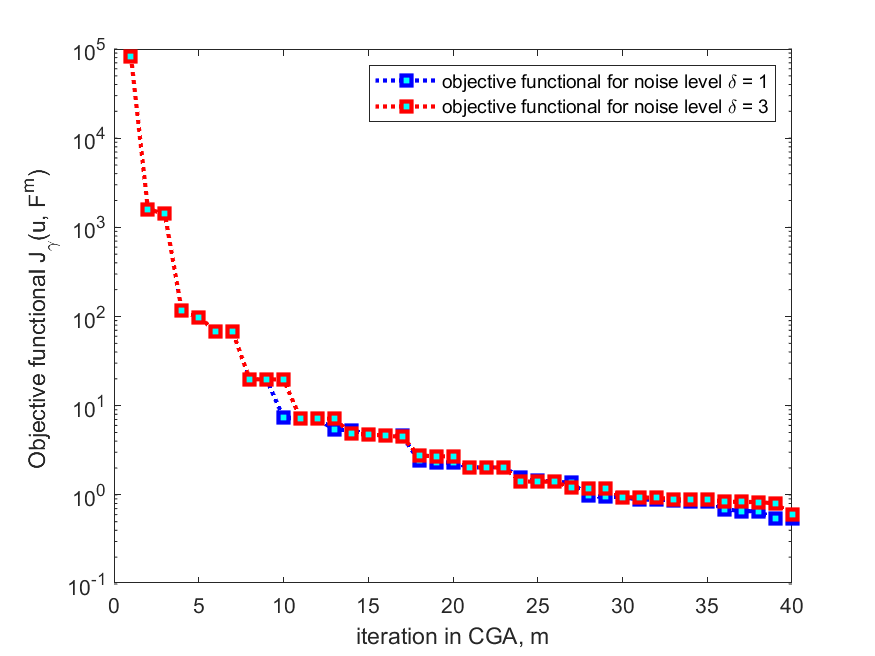} \label{fig:Test2D}} \\ 
\end{tabular}
\end{center}
\caption{Exact $F$ and reconstructed function $F^m$ at the iteration $m= 40$ of CGA. Computations are performed on the mesh with $h = 2^{-6}$.}
 \label{fig:Test2ab}
 \end{figure}
 
\noindent\textbf{Experiment 1:} \emph{(Reconstruction of $F$ using simulated data $u$ on $\partial_{1}\Omega$ and homogeneous initial guess).}
In this test, we take  $a=1 + xy$ in \eqref{prb1}. The exact source function which we want to reconstruct is given by the Gaussian function
\begin{equation}\label{exsource1}
 F(x,y) = e^{-\frac{(x-0.5)^2 + (y-0.3)^2}{m_1}}
\end{equation}
where the constant  $m_1 = const. \in (0,1)$ plays the role of shrinking of the Gaussian function. In this test, we take $m_1=0.1$
and the known function  $G(x,y,t)$  as
\begin{equation}\label{G1}
 G(x,y,t) =  (1 + xy + 2\pi^2 t) \cos(\pi x) \cos(\pi y) e^{\frac{(x-0.5)^2 + (y-0.3)^2}{m_1}}.
\end{equation}
We produce  simulated noisy data $\tilde u$ by  solving the forward problem \eqref{prb1}
 with known functions \eqref{exsource1} and \eqref{G1}
 in the domain $\Omega$ and in time $t \in [0,1]$. We add then randomly distributed
   Gaussian noise to the simulated data $u$ using Matlab's command
    \emph{normrnd} with $\delta=1$ and $\delta=3$ as
\begin{equation}\label{noise}
\tilde u = u  + normrnd(0,\delta/100,Nx,Ny)
\end{equation}
 and run CGA algorithm with smoothed noisy data
    $\tilde u$. Here, $\delta \in (0,100)$ is the noise level in percentage and $Nx, Ny$
    are the number of points in $x$ and $y$ directions of the domain $\Omega$, respectively.\par
Next, we solve ISP by starting
 CGA with a homogeneous initial guess for the source function $F^0(x,y) = 1$. Figure \ref{fig:Test1ab} presents the reconstruction results in CGA. As seen in Figures \ref{fig:Test1B}, \ref{fig:Test1C}, \ref{fig:Test1E}, and \ref{fig:Test1F}, location of the source function is correctly reconstructed, but it appears too wide. While the maximum contrast has been achieved, the reconstruction should be narrowed. The relative errors for the reconstructed source $F(x,y)$ are shown in Figure \ref{fig:Test1G}. Relative errors with varying mesh sizes for $\delta=1$ and $\delta =3$ are provided in Table \ref{testm1} by the notations $\Theta_1^{s(1)}$ and $\Theta_1^{s(3)}$.
\begin{table}[h!] 
\center
\begin{tabular}{ | l | l  |  l | l| l|l|l|l |l|l| }
\hline
$N_x =N_y$ & $l$   &   $\Theta_1^{s(1)} $ &   $\Theta_1^{s(3)} $  &   $\Theta_2^{e(1)} $  &   $\Theta_2^{e(3)} $ &  $\Theta_3^{s(1)} $ &  $\Theta_3^{s(3)} $    \\
\hline 
4  & $2$ &  2.1308   &  2.1337   &  2.1101  &   2.1121 &  0.4139 & 0.4152  \\
8  & $3$ &  2.0072  & 2.0048  & 1.9993  &    2.0039  &  0.4331  &  0.4331   \\ 
16 & $4$ &   1.8315   &  1.8317  &  1.8278  & 1.8283  &  0.3467  &  0.3487   \\
32 &  $5$ &  1.7323 &  1.7323  &  1.7287  &  1.7287 & 0.3017  &  0.3026  \\
64 &  $6$  &  1.6979 &  1.6977 &   1.6830 &   1.6854  &  0.2989  &   0.2982  \\
\hline
\end{tabular}
\caption {Relative errors for the smooth source function, when the simulated data is obtained by exact solution and computed solution are denoted as $\Theta_i^{e(\delta)}$ and $\Theta_i^{s(\delta)}$ respectively, where $\delta = 1,3$ denote the noise level. Moreover, $\Theta_i^{s(\delta)} =\Theta_i^{e(\delta)}  = \frac{\|F - F_{i,m} \|}{\| F\|} $  for mesh sizes $h_l= 2^{-l}, l=2,...,6$ in the experiment $i, i=1,2,3$, at the final optimization iteration $m=40$ in CGA.}
\label{testm1}
\end{table} \medskip \\
\noindent \textbf{Experiment 2:} \emph{(Reconstruction of $F$ using the exact solution $u$ and homogeneous initial guess).}
In this test, the exact source function $F(x,y)$ is given by \eqref{exsource1}
and the known function  $G(x,y,t)$  is given by \eqref{G1}. 
In contrast to Experiment 1, in this experiment we produce simulated noisy data $\tilde{u}$ by the exact solution $u(x,y,t) = t\cos(\pi x) \cos(\pi y) $ in place of the computed solution of the direct problem \eqref{prb1}, with $\delta=1 $ and $\delta=3 $ in \eqref{noise}.  Next, we solve ISP by starting
  CGA with an initial guess for the source function $F^0(x,y) = 1$ and $m_1=0.1$
  in \eqref{exsource1}.
Figure 
 \ref{fig:Test2ab} shows the results of reconstruction in CGA. Using this figure, we observe similar results of reconstruction as in Experiment 1, which is done using computed data, and errors are presented in Figures \ref{fig:Test2C} and \ref{fig:Test2D}. Relative error with different mesh sizes for $\delta=1 $ and $\delta=3 $ is given in Table \ref{testm1} with the notation $\Theta_2^{e(1)}$ and $\Theta_2^{e(3)}$.
\begin{figure}
\begin{center}  
\begin{tabular}{cc}
  \subfloat[]{\includegraphics[scale=0.46, clip = true, trim = 0.0cm 0.0cm 0.0cm 0.0cm ]{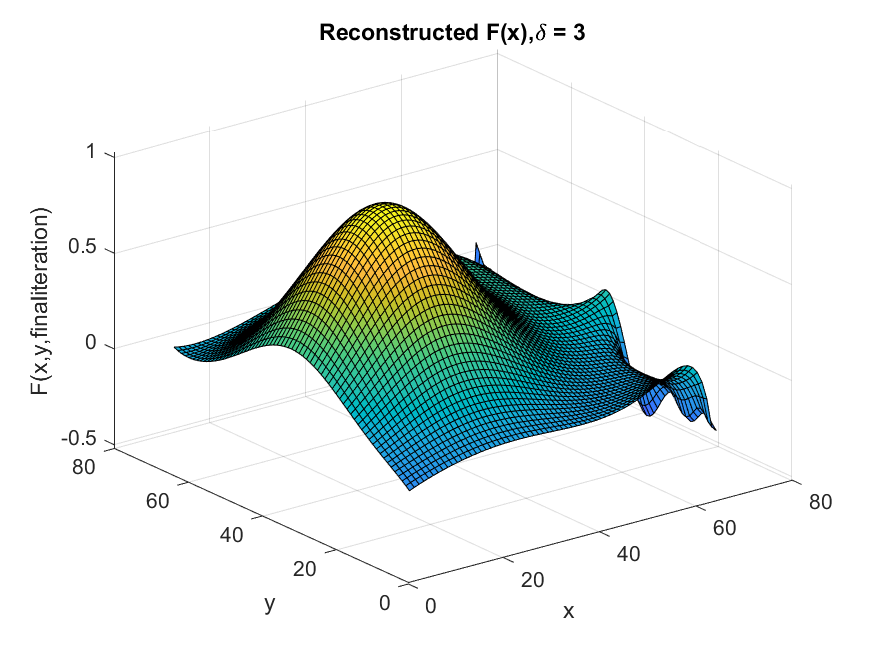} \label{fig:Test3A}}&
  \subfloat[] {\includegraphics[scale=0.46, clip = true, trim = 0.0cm 0.0cm 0.0cm 0.0cm]{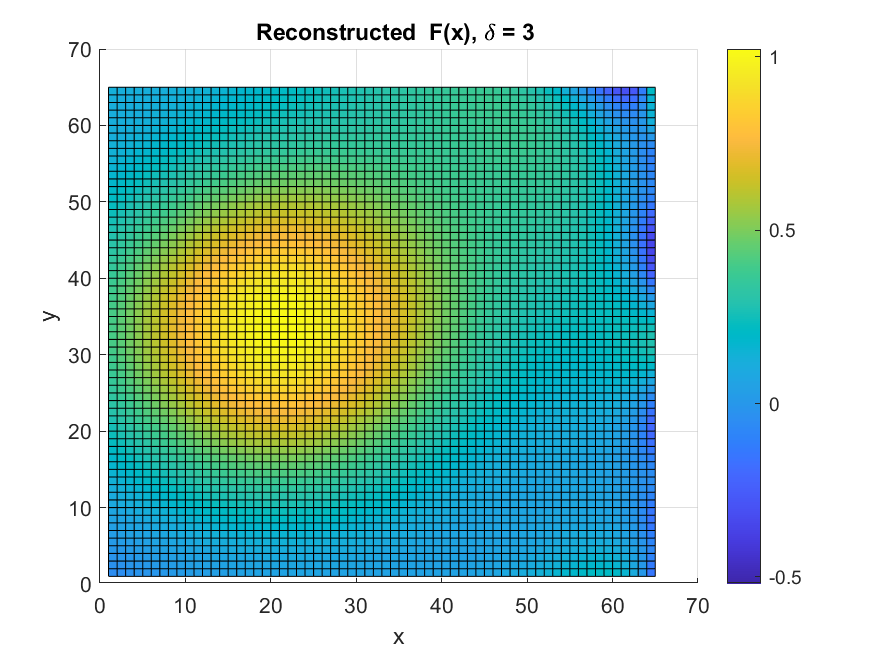}\label{fig:Test3B}}\\
\subfloat[]{\includegraphics[scale=0.46, clip = true, trim = 0.0cm 0.0cm 0.0cm 0.0cm]{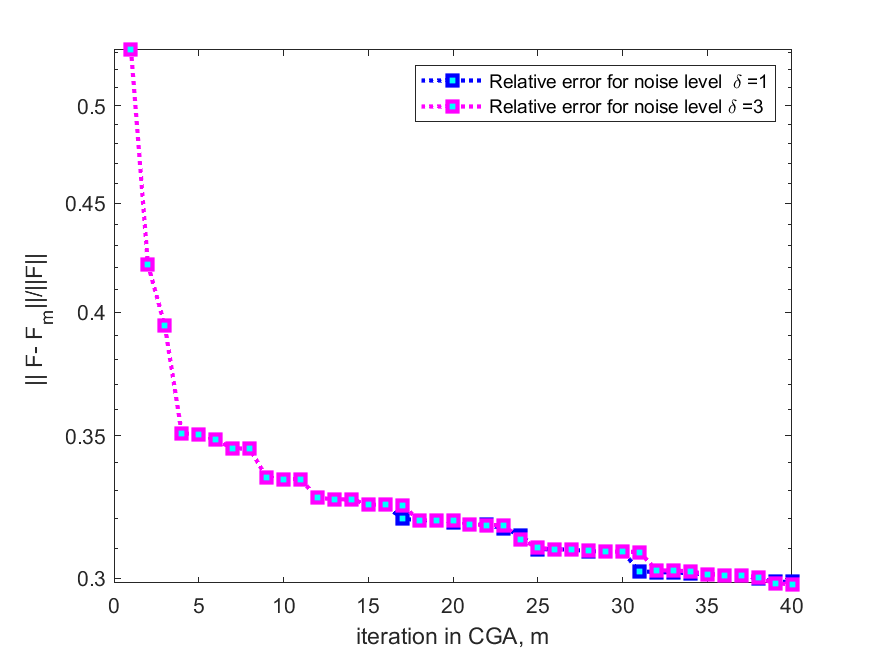} \label{fig:Test3C}}&
 \subfloat[]{\includegraphics[scale=0.46, clip = true, trim = 0.0cm 0.0cm 0.0cm 0.0cm]{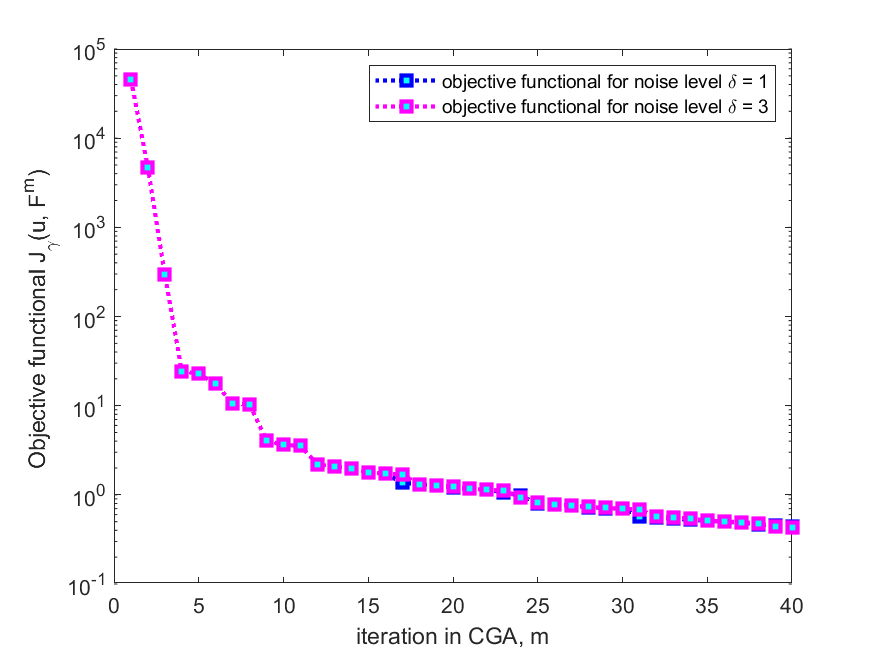} \label{fig:Test3D}} \\ 
\end{tabular}
\end{center}
\caption{Exact $F$ and reconstructed function $F^m$ at the iteration $m= 40$ of CGA. Computations are performed on the mesh with $h = 2^{-6}$.}
 \label{fig:Test3ab}
 \end{figure} \medskip\\
\noindent \textbf{Experiment 3:} \emph{(Reconstruction of $F$ using simulated data and an initial guess $F^0$ close to exact $F$).}
In this test we take  $a$, $F$ and $G$ the same as given in Experiment 1. Using the same process as in Experiment 1, we generate simulated noisy data $\tilde u$. Next, we solve ISP by initiating 
 CGA with an initial guess $F^0(x,y)= e^-\frac{(x-0.5)^2 + (y-0.3)^2}{m_1} + x^2y^2$. From Figures \ref{fig:Test3A} and \ref{fig:Test3B}, we observe that the source function's location is accurately reconstructed, and the maximum contrast is achieved. The relative errors for noise levels $\delta=1$ and $\delta=3$ are shown in Figure \ref{fig:Test3C} and relative errors for different mesh sizes, at $\delta=1$ and $\delta=3$, are presented in Table \ref{testm1} by the notation $\Theta_3^{s(1)}$ and, $\Theta_3^{s(3)}$ respectively. It is evident that the relative error in this case is sufficiently small compared to experiments 1 and 2.
 \begin{Remark} It is evident from Experiments 1, 2, and 3 that when the initial guess $F^0$ is homogeneous, the reconstruction of the source is scattered irrespective of whether the measured data is a simulated one or exact data. On the other hand, we achieve an accurate reconstruction when we have an initial guess that is close to the exact source function, even in the case of simulated measured data.
 \end{Remark}
\begin{figure}
\begin{center}  
\begin{tabular}{cc}
 \subfloat[] {\includegraphics[scale=0.455, clip = true, trim = 0.0cm 0.0cm 0.0cm 0.0cm ]{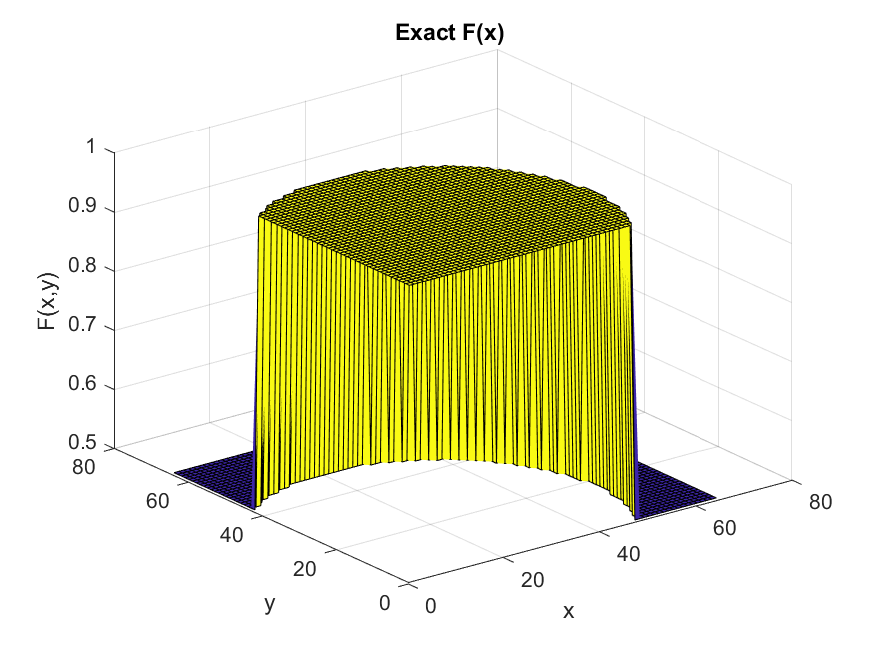}  \label{fig:Test4A}} &
 \subfloat[] {\includegraphics[scale=0.455, clip = true, trim = 0.0cm 0.0cm 0.0cm 0.0cm]{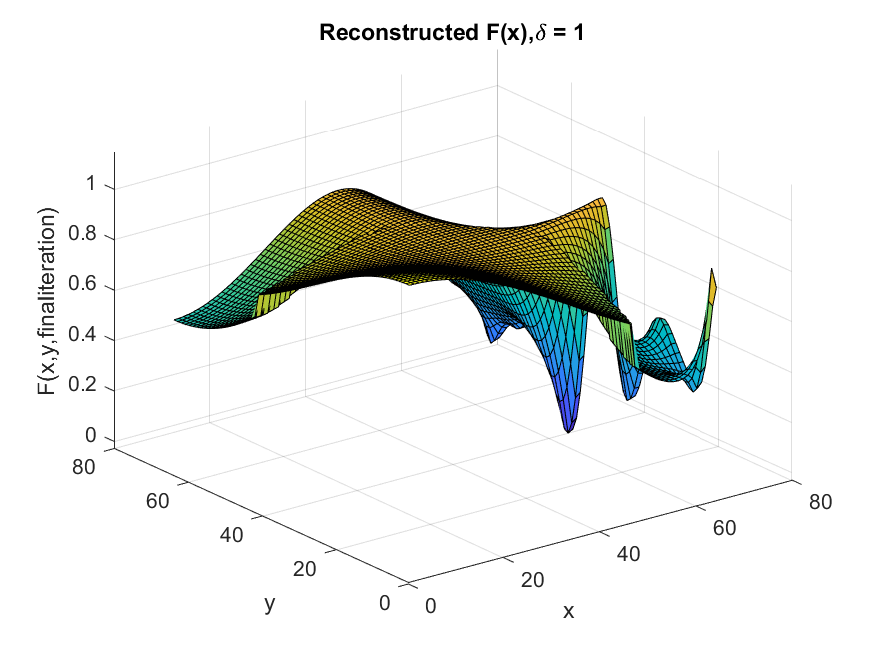}  \label{fig:Test4B}}\\
\subfloat[] {\includegraphics[scale=0.455, clip = true, trim = 0.0cm 0.0cm 0.0cm 0.0cm ]{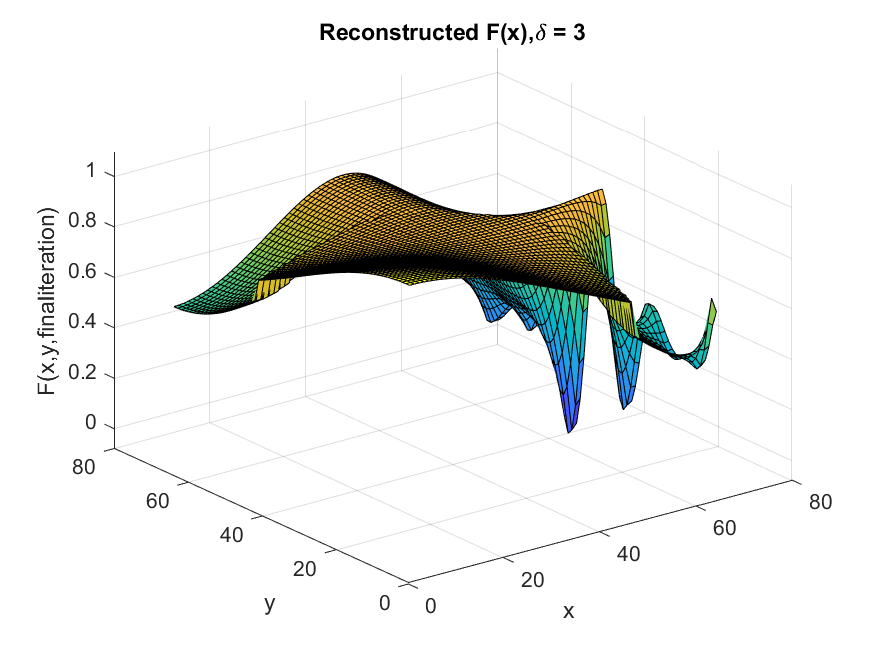}  \label{fig:Test4C}}&
\subfloat[] {\includegraphics[scale=0.455, clip = true, trim = 0.0cm 0.0cm 0.0cm 0.0cm ]{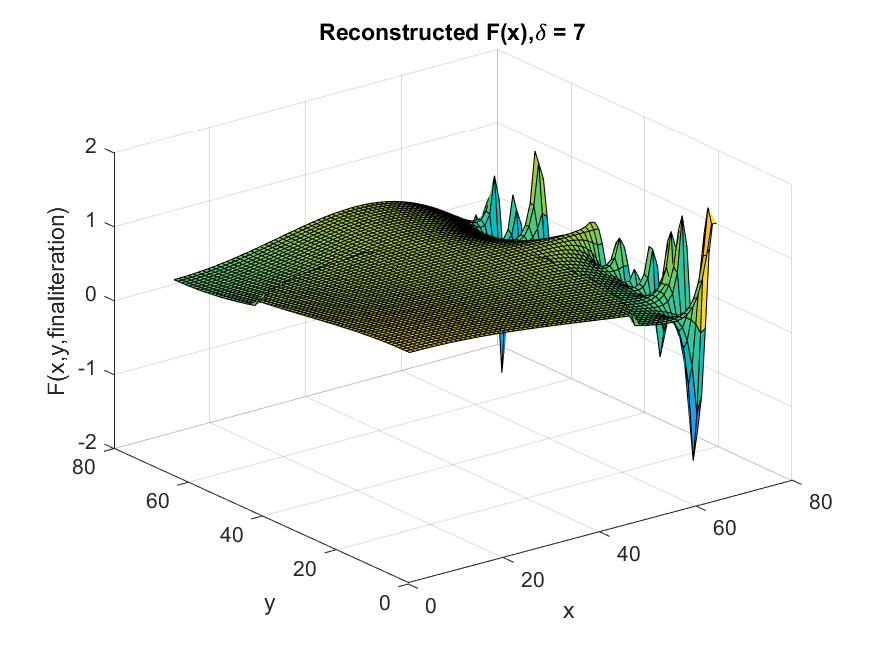}  \label{fig:Test4D}}\\
\subfloat[]  {\includegraphics[scale=0.455, clip = true, trim = 0.0cm 0.0cm 0.0cm 0.0cm]{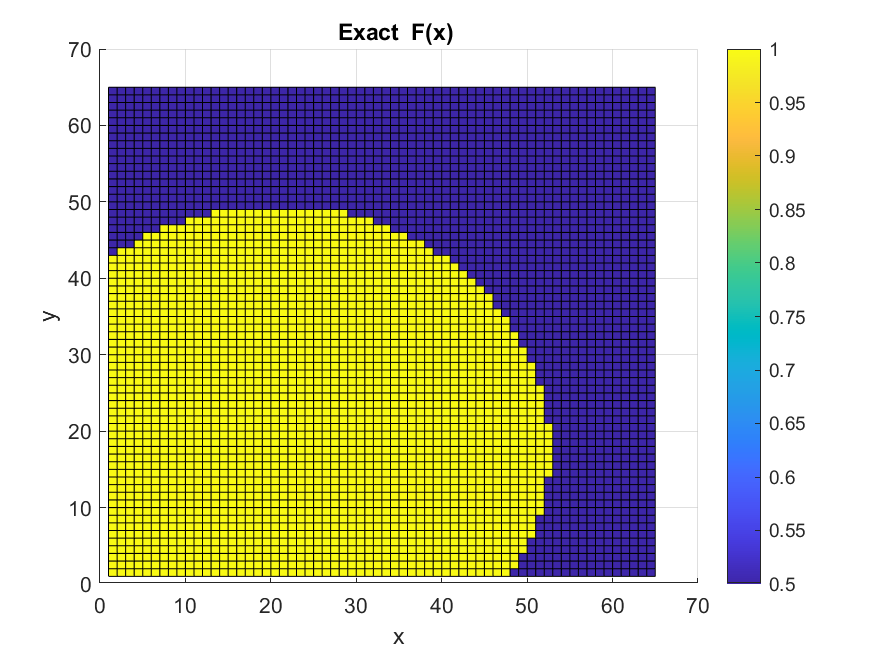}  \label{fig:Test4E}}&
\subfloat[]   {\includegraphics[scale=0.455, clip = true, trim = 0.0cm 0.0cm 0.0cm 0.0cm]{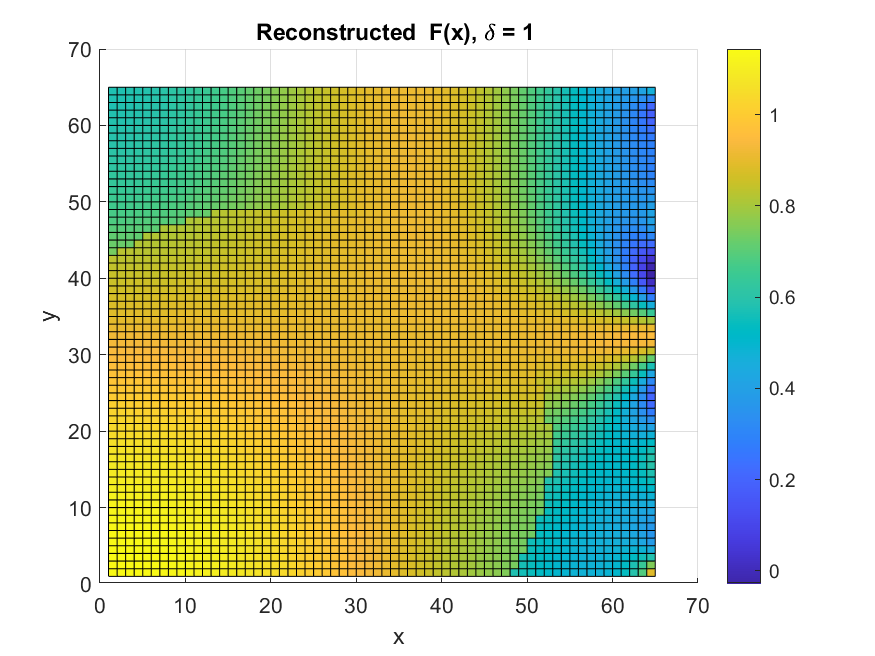}  \label{fig:Test4F}} \\
\end{tabular}
\end{center}
\end{figure}
\newpage
\setcounter{subfigure}{6}
\begin{figure}
\begin{center}  
\begin{tabular}{cc}
\subfloat[]  {\includegraphics[scale=0.455, clip = true, trim = 0.0cm 0.0cm 0.0cm 0.0cm]{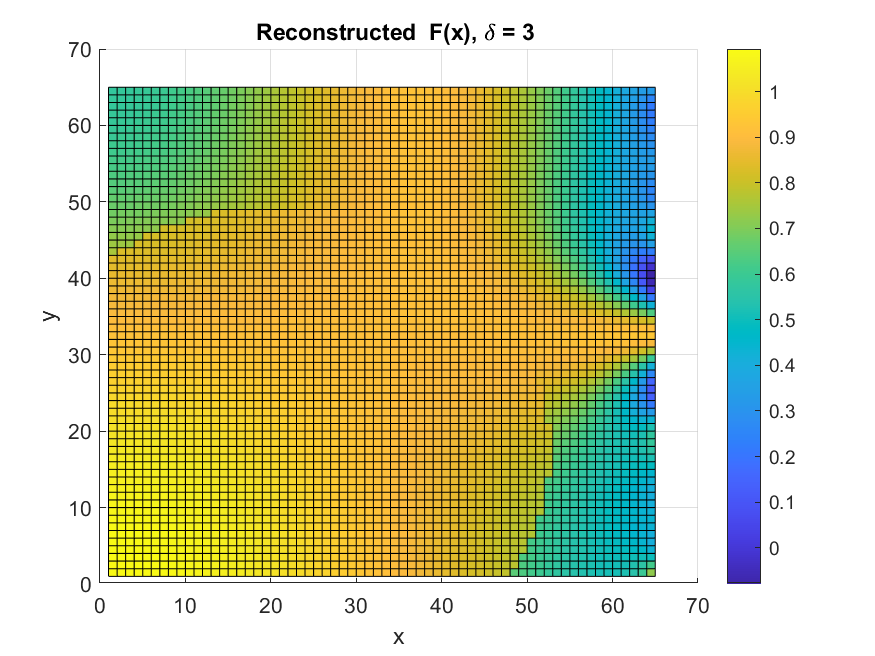}  \label{fig:Test4G}} &
\subfloat[]  {\includegraphics[scale=0.455, clip = true, trim = 0.0cm 0.0cm 0.0cm 0.0cm]{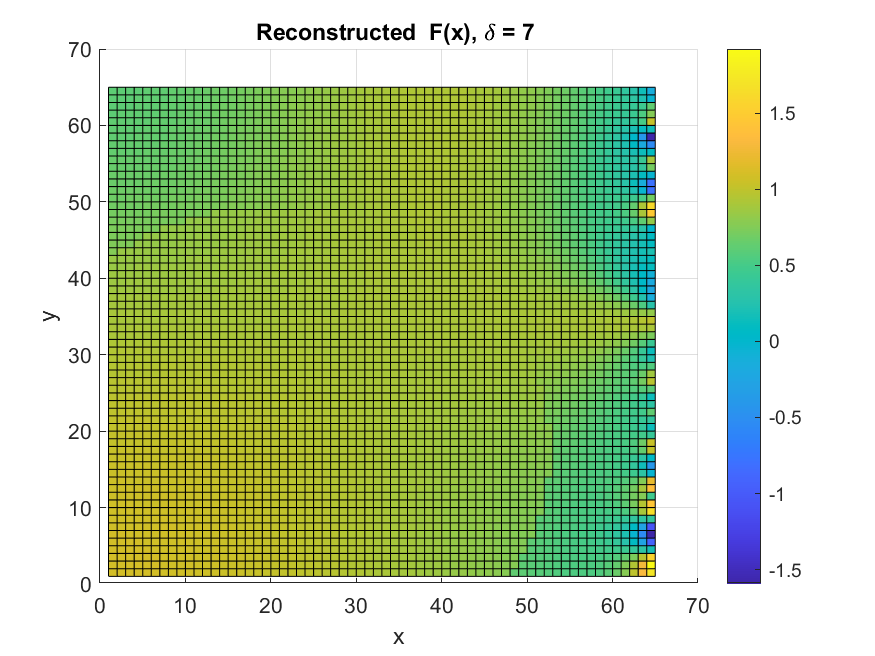}  \label{fig:Test4H}} \\
\subfloat[]{\includegraphics[scale=0.455, clip = true, trim = 0.0cm 0.0cm 0.0cm 0.0cm]{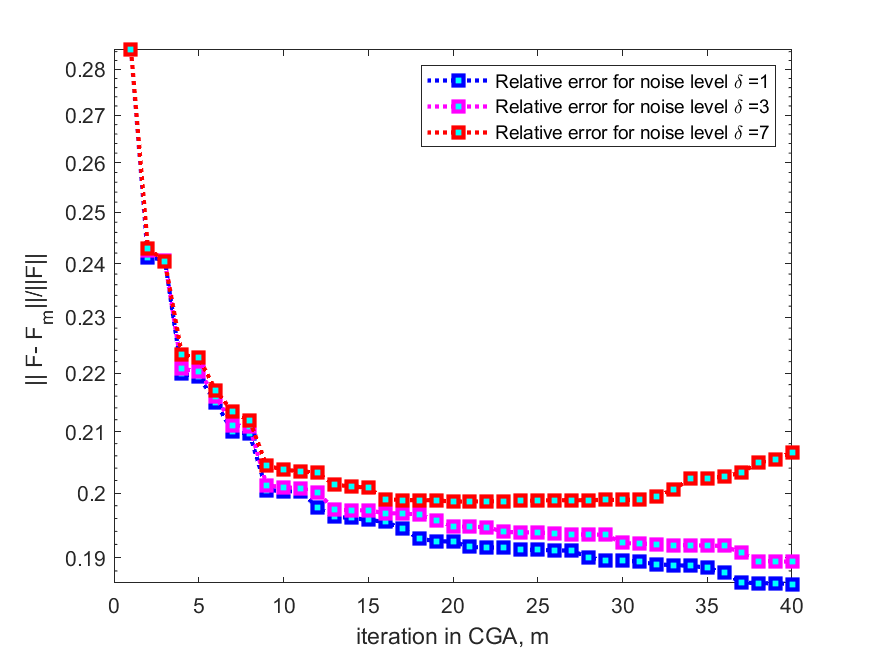}  \label{fig:Test4I}} &
\subfloat[]{\includegraphics[scale=0.455, clip = true, trim = 0.0cm 0.0cm 0.0cm 0.0cm]{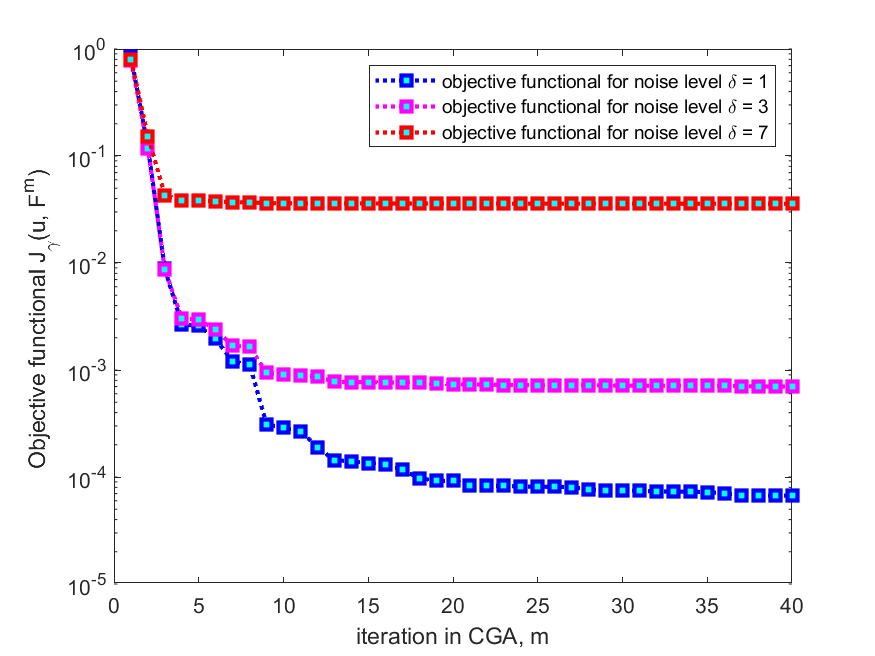} \label{fig:Test4J}} \\ 
\end{tabular}
\end{center}
\caption{Exact $F$ and reconstructed function $F^m$ at the iteration $m= 40$ of CGA. Computations are performed on the mesh with $h = 2^{-6}$.}
 \label{fig:Test4ab}
 \end{figure}
\noindent\textbf{Experiment 4:} \emph{(Reconstruction of discontinuous source function and homogeneous initial guess).}
In this test we take  $a=1 + xy$ in \eqref{prb1}. The exact source function, which we want to reconstruct, is given by the discontinuous function
\begin{equation*}\label{21gaussian3}
\begin{split}
F(x,y) = \begin{cases}
   1 & \text{if } \,\,(x-0.25)^2 + (y-0.3)^2 \leq 0.25; \\
    0.5 & \text{ otherwise}.
\end{cases}
\end{split}
\end{equation*}
The known function $G(x,y,t)$ is given by
\begin{equation*}\label{21gaussian4}
\begin{split}
G(x,y,t) = \begin{cases}
    (1 + xy + 2\pi^2 t) \cos(\pi x) \cos(\pi y) & \text{if } \,\,(x-0.25)^2 + (y-0.3)^2 \leq 0.25; \\
    2(1 + xy + 2\pi^2 t) \cos(\pi x) \cos(\pi y) & \text{otherwise}.
\end{cases}
\end{split}
\end{equation*}
We produce the simulated noisy data $\tilde u$, as in \eqref{noise}. Next, we  solve ISP by starting CGA with a homogeneous initial guess for the source function $F^0(x,y) = 0.9.$

Figure \ref{fig:Test4ab} shows the reconstruction results in CGA. From Figures \ref{fig:Test4B}, \ref{fig:Test4C}, \ref{fig:Test4F} and \ref{fig:Test4G}, we observe that maximum contrast is achieved, but the location of the reconstructed source is too wide and needs to be reduced. Moreover, for the noise level $\delta = 7$, the reconstructed source given by Figures \ref{fig:Test4D} and \ref{fig:Test4H} is scattered. The relative errors for $\delta = 1$, $\delta = 3$ and $\delta = 7$ are shown in Figure \ref{fig:Test4I}. Table \ref{testm3}, shows the relative error values corresponding to different mesh sizes and noise levels $\delta = 1, 3 \,\, \mbox{and}\,\, 7$. 

Further, one can observe that for the discontinuous data and homogeneous guess at the noise level $\delta =7$, the relative error starts increasing, and our reconstructed source is not good.
\begin{table}[h!] 
\center
\begin{tabular}{ | l | l  |  l | l| l|l|l|l |l|l|l| }
\hline
$N_x =N_y$ & $l$   &   $\Theta_4^{s(1)} $ &   $\Theta_4^{s(3)} $  &   $\Theta_4^{s(7)} $ &   $\Theta_5^{s(1)} $  &   $\Theta_5^{s(3)} $    \\
\hline 
4  & $2$ &  0.1961    &    0.1985   &  0.2120 & 0.1294 &  0.1185  \\
8  & $3$ & 0.1943   &   0.1985  &  0.2014 &    0.0969  &   0.0951    \\ 
16 & $4$ &   0.1940 &   0.1978 & 0.2011 &    0.0939  &  0.0910    \\
32 &  $5$ &   0.1865  &   0.1900  &  0.1987 &   0.0920  &  0.0975      \\
64 &  $6$  &  0.1861  &  0.1894  & 0.1986 &   0.0852  &  0.0964    \\
\hline
\end{tabular}
\caption{Relative errors $\Theta_i^{s(\delta)}$ for the discontinuous source function defined on a circular domain when the simulated data is obtained by the computed solution and $\delta = 1,3,7$ denote the noise level. Moreover, $ \Theta_i^{s(\delta)}  = \frac{\|F - F_{i,m} \|}{\| F\|} $  for mesh sizes $h_l= 2^{-l}, l=2,...,6$ in Experiment $i, i= 4,5$, at the final optimization iteration $m=40$.}
\label{testm3}
\end{table}\\
\noindent \textbf{Experiment 5:} \emph{(Reconstruction of $F$ for a close enough initial guess).}
In this test, we take  $a$, $F$ and $G$ the same as in Experiment 4, and simulated noisy data $\tilde u$ is obtained by the same procedure as in Experiement 4. Next, we  solve ISP by starting
 CGA with an initial guess is close enough to the exact source $F(x,y)$ given by
\begin{eqnarray*}\label{21gaussian3}
F^0(x,y) = \begin{cases}
   1 + x^2y^2 & \text{if } \,\,(x-0.25)^2 + (y-0.3)^2 \leq 0.25; \\
    0.5 + x^2y^2 & \text{ otherwise}.
\end{cases}
\end{eqnarray*}
The Figure \ref{fig:Test5ab} presents the CGA reconstruction results. Figures \ref{fig:Test5A} and \ref{fig:Test5B} demonstrate accurate reconstruction of the source function's location and the achieved maximum contrast. The relative error is shown in Figure \ref{fig:Test5C} and the errors for $\delta=1$ and $\delta=3$ are listed in Table \ref{testm3} under the notations $\Theta_5^{s(1)}$ and $\Theta_5^{s(3)}$, respectively. It is evident that the relative error in this case is comparatively smaller than the homogeneous initial guess considered in Experiment 4.
\begin{figure}
\begin{center}  
\begin{tabular}{cc}
 \subfloat[]{\includegraphics[scale=0.46, clip = true, trim = 0.0cm 0.0cm 0.0cm 0.0cm ]{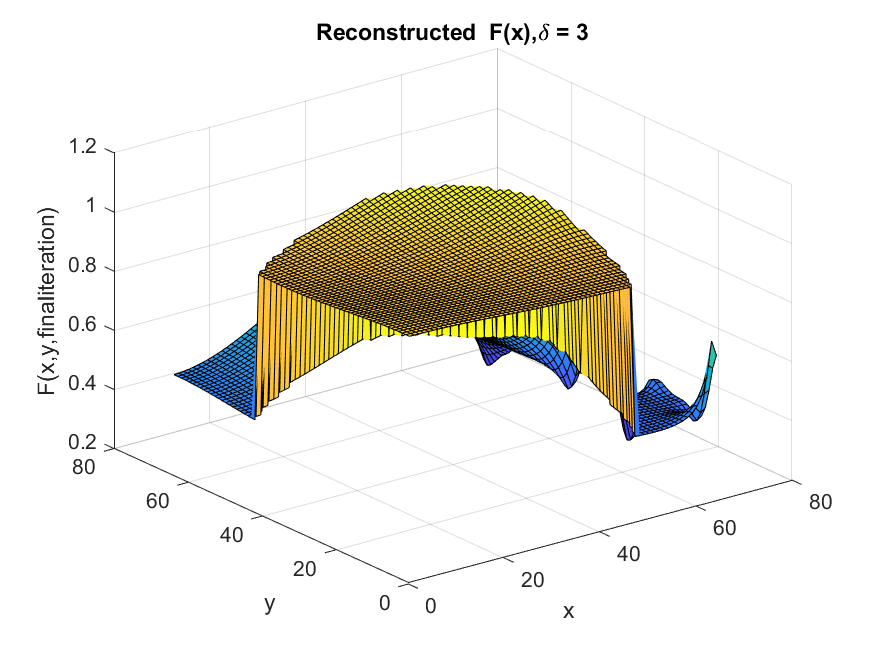}\label{fig:Test5A}} &
 \subfloat[]{\includegraphics[scale=0.46, clip = true, trim = 0.0cm 0.0cm 0.0cm 0.0cm]{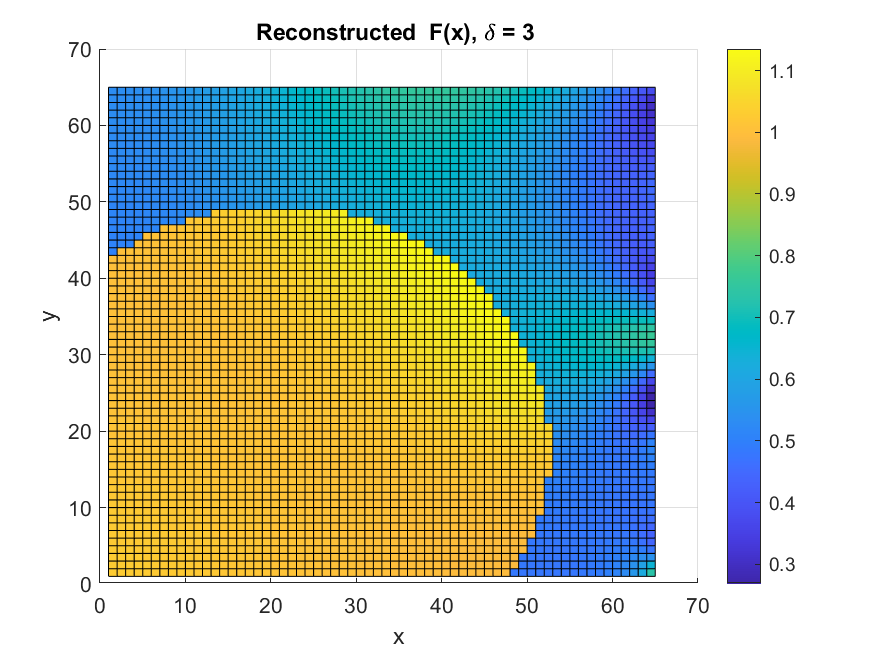}\label{fig:Test5B}} \\
\subfloat[]{\includegraphics[scale=0.46, clip = true, trim = 0.0cm 0.0cm 0.0cm 0.0cm]{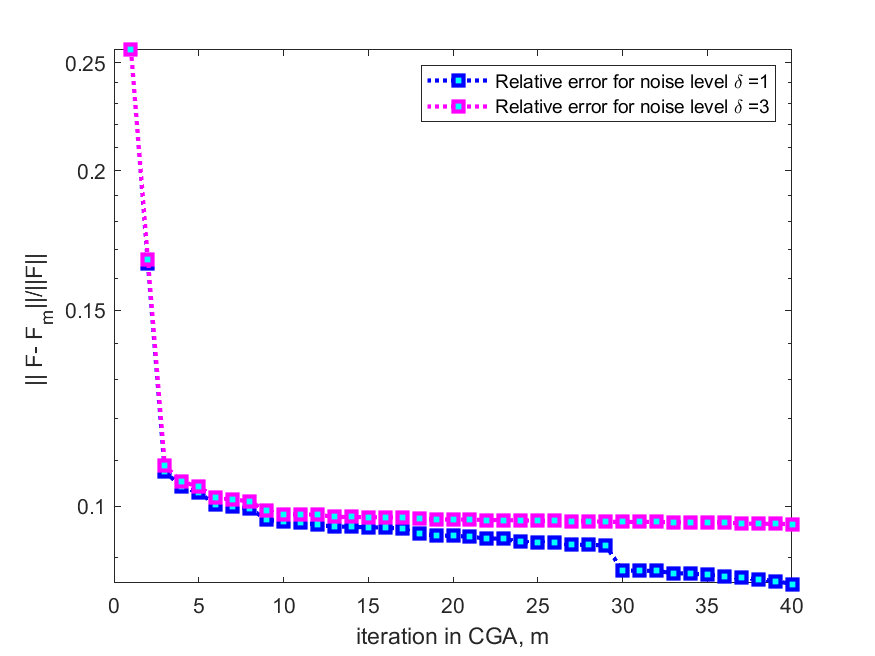}\label{fig:Test5C}} &
\subfloat[]{\includegraphics[scale=0.46, clip = true, trim = 0.0cm 0.0cm 0.0cm 0.0cm]{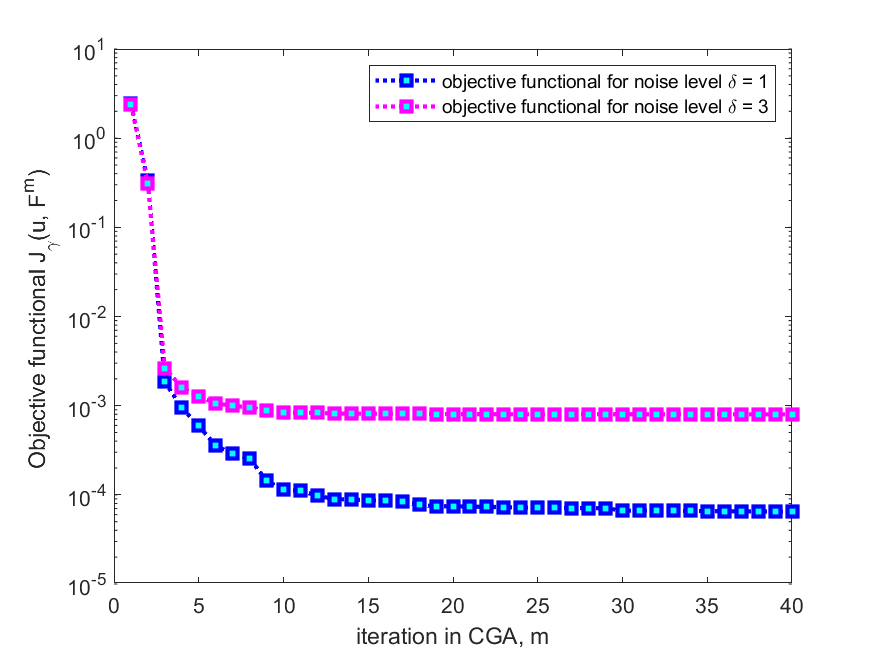}\label{fig:Test5D}} \\ 
\end{tabular}
\end{center}
\caption{Exact $F$ and reconstructed function $F^m$ at the iteration $m= 40$ of CGA. Computations are performed on the mesh with $h = 2^{-6}$.}
 \label{fig:Test5ab}
\end{figure}
\subsection{Reconstruction of the Source Function $F$ in 3D} In this section, we analyze a numerical  example with a close enough initial guess for the source term in the 3D case. \\

\noindent \textbf{Experiment 6:} \emph{(Reconstruction of $F$ for close enough initial guess in 3D).}
In this test, we take known functions  $a=1 + xyz$  and \begin{eqnarray*}\label{G9}
 G(x,y,z,t) =  (1 + xyz + 3\pi^2 t) \cos(\pi x) \cos(\pi y) \cos(\pi z) e^{\frac{(x-0.5)^2 + (y-0.5)^2 + (z-0.5)^2}{m_1}}.
\end{eqnarray*} in \eqref{prb1}.
 The specific source function we aim to reconstruct in this example is represented by the Gaussian function 
\begin{eqnarray*}\label{exsource18}
 F(x,y,z) = e^{-\frac{(x-0.5)^2 + (y-0.5)^2 + (z-0.5)^2}{m_1}}
\end{eqnarray*}
where the constant $m_1=0.1.$ We produce simulated noisy data $\tilde{u}$ using the same procedure as in Experiment 1, 
\begin{eqnarray*}\label{noise4}
\tilde u = u  + normrnd(0,\delta/100,Nx,Ny,Nz),
\end{eqnarray*}
where $Nx, Ny$ and $Nz$ are the number of points in $x$, $y$ and $z$ directions of the domain $\Omega$.
 We begin solving the ISP by initializing CGA with an initial guess for the source function $F^0(x,y,z)= e^{-\frac{(x-0.5)^2 + (y-0.5)^2 + (z-0.5)^2}{m_1}} + \frac{x^2y^2z^2}{10}$. Figure \ref{fig:Test7B} shows the correct reconstruction of the source function's location with maximum contrast. Additionally, in Figures \ref{fig:Test7A} and \ref{fig:Test7B}, the iso-surface value is 0.1. Errors in the reconstruction are shown in Figure \ref{fig:Test7C}. The relative errors for $\delta=1$ and $\delta=3$ are given in Table \ref{testm4} as $\Theta_6^{s(1)}$ and $\Theta_6^{s(3)}$, respectively. 
\begin{table}[h!] 
\center
\begin{tabular}{ | l | l  |  l | l| l|l|l|l |l|l| }
\hline
$N_x =N_y$ & $l$   &    $\Theta_6^{s(1)} $  &   $\Theta_6^{s(3)} $    \\
\hline 
4  & $2$ &     0.0557  &   0.0556  \\
8  & $3$ &    0.0435  &    0.0436     \\ 
16 & $4$ &    0.0312  & 0.0311   \\
\hline
\end{tabular}
\caption{Relative errors $\Theta_i^{s(\delta)}$ for the smooth source function in 3D, when the simulated data is obtained by computed solution and $\delta = 1,3$ denote the noise level. Moreover, $ \Theta_i^{s(\delta)}  = \frac{\|F - F_{i,m} \|}{\| F\|} $  for mesh sizes $h_l= 2^{-l}, l=2,...,4$ in Experiment 6 at the final optimization iteration $m=40$.}
\label{testm4}
\end{table}
\begin{figure}
\begin{center}  
\begin{tabular}{cc}
\subfloat[] {\includegraphics[scale=0.37, clip = true, trim = 0.0cm 0.0cm 0.0cm 0.0cm ]{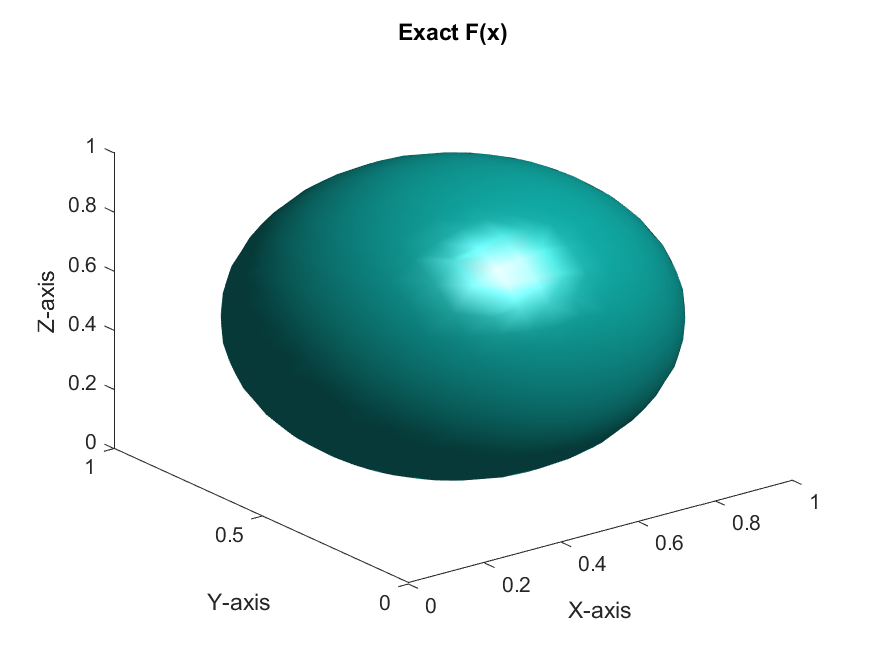} \label{fig:Test7A}} &
 \subfloat[] {\includegraphics[scale=0.37, clip = true, trim = 0.0cm 0.0cm 0.0cm 0.0cm]{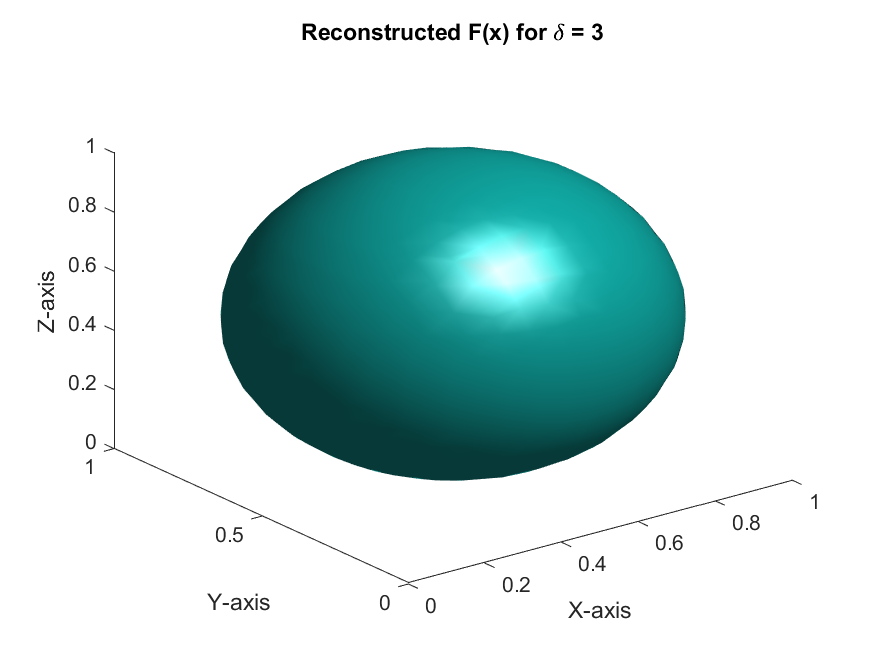} \label{fig:Test7B}}  \\
 \subfloat[] {\includegraphics[scale=0.37, clip = true, trim = 0.0cm 0.0cm 0.0cm 0.0cm]{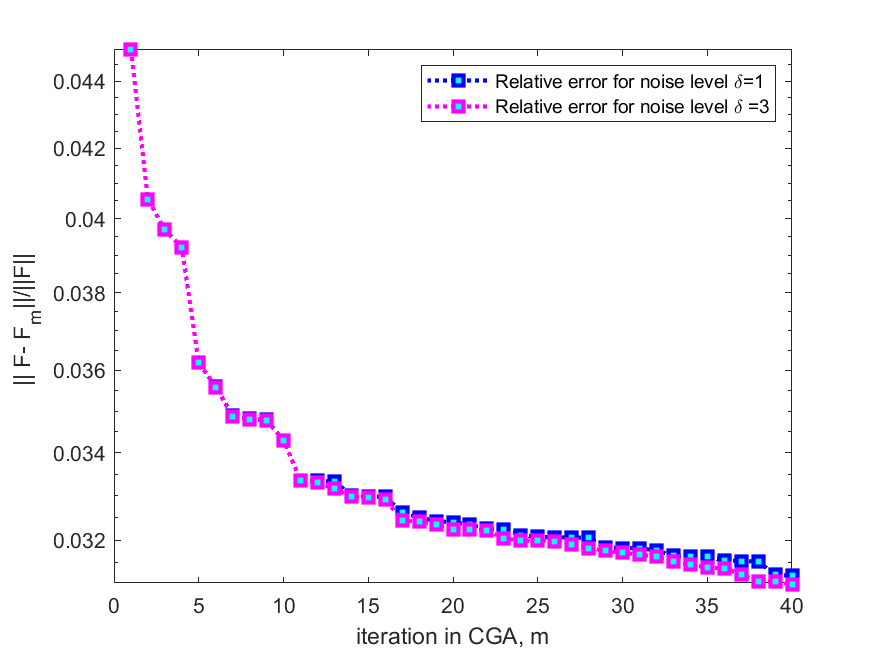} \label{fig:Test7C}} &
\subfloat[]{\includegraphics[scale=0.37, clip = true, trim = 0.0cm 0.0cm 0.0cm 0.0cm]{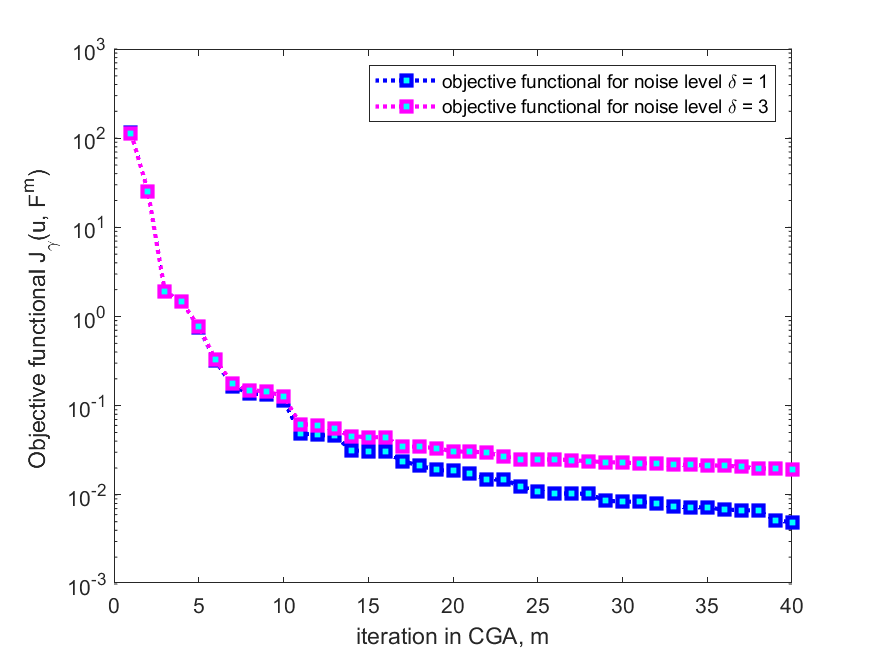} \label{fig:Test7D}} \\
\end{tabular}
\end{center}
\caption{Exact $F$ and reconstructed function $F^m$ at the iteration $m= 40$ of CGA. Computations are performed on the mesh with $h = 2^{-4}$.}
 \label{fig:Test7ab}
 \end{figure}
\section{Conclusions}
\label{sec:concl}
In this work, we present an optimization approach and numerical examples
for the reconstruction of the space-dependent source function $F(x)$ in
the parabolic inverse problem using observations $\tilde u$ at the boundary $\partial_1 \Omega$ of the computational domain
$\Omega\times(0,T)$.
The inverse source problem is approached by minimizing the regularized Tikhonov functional through a Lagrangian method. We present this Lagrangian method, derive the optimality conditions for solving the optimization problem related to the Lagrangian, and formulate a conjugate gradient reconstruction algorithm. The finite difference discretization for both forward and adjoint problem solutions is also provided.
We establish the proof for the Fr\'echet differentiability of the regularized Tikhonov functional, along with the existence result for the solution of the inverse source problem. Additionally, a local stability estimate for the unknown source term is presented.
In the numerical section, we demonstrate a computational study for reconstructing the source function
 $F(x)$. Our numerical tests show that the reconstruction of the source function $F(x)$ remains stable and accurate even with noise levels of $1\%$ and $3\%$.
\section*{Acknowledgments}
The work of the first and third author is supported by the National Board for Higher Mathematics, Govt. of India through the research grant No:02011/13/2022/R{\&}D-II/10206. The research of the second author has been
 supported by the Swedish Research Council grant VR 2024-04459. Additionally, the work is supported by the Swedish Foundation for International Cooperation in
 Research and Education grant STINT MG 2023-9300.

\end{document}